\title{A Stochastic Proximal Polyak Step Size}
\author{\name Fabian Schaipp \email fabian.schaipp@tum.de \\
      \addr Department of Mathematics\\
      Technical University of Munich
      \AND
      \name Robert M.\ Gower \email rgower@flatironinstitute.org \\
      \addr Center for Computational Mathematics\\
      Flatiron Institute, New York
      \AND
      \name Michael Ulbrich \email m.ulbrich@tum.de\\
      \addr Department of Mathematics\\
      Technical University of Munich}
\let\classAND\AND
\let\AND\relax
\let\AND\classAND
\newtheorem{theorem}{Theorem}
\newtheorem{lemma}[theorem]{Lemma}
\newtheorem{proposition}[theorem]{Proposition}
\newtheorem{corollary}[theorem]{Corollary}
\newtheorem{assumption}{Assumption}
\newtheorem{remark}{Remark}
\definecolor{kleinblue}{RGB}{0, 47, 167}
\definecolor{todored}{RGB}{189, 30, 30}
\let\temp\phi
\let\phi\varphi
\let\varphi\temp
\let\temp\varepsilon
\let\epsilon\varepsilon
\let\varepsilon\temp
\DeclareMathOperator*{\argmin}{arg\,min}
\newcommand{\prox}[1]{\mathrm{prox}_{#1}}
\newcommand{\env}[1]{\mathrm{env}_{#1}}
\newcommand{\iprod}[2]{\langle #1, #2 \rangle}
\newcommand{\oneover}[1]{\frac{1}{#1}}
\newcommand{\toneover}[1]{\tfrac{1}{#1}}
\newcommand{\T}{\top}
\newcommand{\R}{\mathbb{R}}
\newcommand{\Rn}{\mathbb{R}^n}
\newcommand{\N}{\mathbb{N}}
\newcommand{\E}{\mathbb{E}}
\newcommand{\spsmax}{\texttt{SPS}\textsubscript{max}}
\newcommand{\ie}{i.e.\ }
\begin{document}
\maketitle
\begin{abstract}
Recently, the stochastic Polyak step size (\texttt{SPS}) has emerged as a competitive adaptive step size scheme for stochastic gradient descent.  Here we develop \texttt{ProxSPS},  a \textit{proximal} variant of \texttt{SPS} that can handle regularization terms. Developing a proximal variant of \texttt{SPS} is particularly important, since \texttt{SPS} requires a lower bound of the objective function to work well. When the objective function is the sum of a loss and a regularizer, available estimates of a lower bound of the sum can be loose. In contrast, \texttt{ProxSPS} only requires a lower bound for the loss which is often readily available.
As a consequence, we show that \texttt{ProxSPS} is easier to tune and more stable in the presence of regularization. Furthermore for image classification tasks, \texttt{ProxSPS} performs as well as \texttt{AdamW} with little to no tuning, and results in a network with smaller weight parameters.
We also provide an extensive convergence analysis for \texttt{ProxSPS} that includes the non-smooth,  smooth, weakly convex and strongly convex setting.
\end{abstract}

\section{Introduction}
Consider problems of the form 
\begin{align}
    \label{prob:unreg}
    \min_{x \in \R^n} f(x), \quad f(x) := \mathbb{E}_P[f(x;S)] = \int_\mathcal{S}f(x;s)dP(s),
\end{align}
where $\mathcal{S}$ is a sample space (or sample set). Formally, we can see $S$ as a random variable mapping to $\mathcal{S}$ and $P(s)$ as the associated probability measure. 
Let us assume that for each $s\in \mathcal{S}$, the function $f(\cdot;s): \mathbb{R}^n \to \mathbb{R}$ is locally Lipschitz and hence possesses the Clarke subdifferential $\partial f(\cdot;s)$ \citep{Clarke1983}.
Problems of form \eqref{prob:unreg} arise in machine learning tasks where $\mathcal{S}$ is the space of available data points \citep{Bottou2018}. 
An efficient method for such problems is stochastic (sub)gradient descent \citep{Robbins1951, Bottou2010, Davis2019}, given by  
\begin{align}
    \tag{\texttt{SGD}}
    \label{eqn:sgd}
    x^{k+1} = x^k - \alpha_k g_k, \quad g_k \in \partial f(x^k;S_k), 
    \quad \mbox{where } S_k \sim P.
\end{align}
Moreover, we will also consider the composite problem
\begin{align}
    \label{prob:composite}
    \min_{x \in \R^n} \psi(x), \quad \psi(x) := f(x) + \phi(x),
\end{align}
where $\phi:\R^n\to\R \cup \{\infty\}$ is a proper, closed, and convex regularization function.
In practical situations, the expectation in the objective function $f$ is typically approximated by a sample average over $N\in\N$ data points. We formalize this special case with 
\begin{align}
    \tag{\texttt{ER}}
    \label{eqn:ER}
    \mathcal{S}=\{s_1,\dots,s_N\},~ P(s_i) = \frac{1}{N}, ~ f_i:= f(\cdot;s_i) \quad i=1,\dots,N.
\end{align}
In this case, problem \eqref{prob:unreg} becomes the empirical risk minimization problem 
\begin{align*}
    \min_{x \in \R^n} \frac{1}{N} \sum_{i=1}^N f_i(x).
\end{align*}
\subsection{Background and Contributions}
\textbf{Polyak step size.} For minimizing a convex, possibly non-differentiable function $f$, \citet[Chapter 5.3]{Polyak1987} proposed
\begin{align*}
    x^{k+1} = x^k - \alpha_k g_k, \quad \alpha_k = \frac{f(x^k) - \min f}{\|g_k\|^2}, \quad g_k \in \partial f(x^k)\setminus \{0\}.
\end{align*}
This particular choice of $\alpha_k$, requiring the knowledge of $\min f$, has been subsequently called the \textit{Polyak step size} for the subgradient method.  
Recently, \citet{Berrada2019, Loizou2021, Orvieto2022} adapted the Polyak step size to the stochastic setting: consider the \eqref{eqn:ER} case and assume that each $f_i$ is differentiable and that a lower bound $C(s_i) \leq \inf_x f_{i}(x)$ is known for all $i\in[N]$. The method proposed by \citep{Loizou2021} is 
\begin{align}
    \label{eqn:sps-max}
    \tag{\spsmax}
    x^{k+1} = x^k - \min\Big\{\gamma_b, \frac{f_{i_k}(x^k)-C(s_{i_k})}{c\|\nabla f_{i_k}(x^k)\|^2}\Big\}\nabla f_{i_k}(x^k),
\end{align}
with hyper-parameters $c,\gamma_b > 0$ and where in each iteration $i_k$ is drawn from $\{1,\dots,N\}$ uniformly at random. It is important to note that the initial work \citep{Loizou2021} used $C(s_i) = \inf f_i$; later, \citet{Orvieto2022}
established theory for~\eqref{eqn:sps-max} for the more general case of $C(s_i) \leq \inf_x f_i(x)$ and allowing for mini-batching. 
Other works analyzed the Polyak step size in the convex, smooth setting \citep{Hazan2019} and in the convex, smooth and stochastic setting \citep{Prazeres2021}. Further, the stochastic Polyak step size is closely related to stochastic model-based proximal point \citep{Asi2019} as well as stochastic bundle methods \citep{Paren2022}.

\emph{Contribution.} We propose a proximal version of the stochastic Polyak step size, called \texttt{ProxSPS}, which explicitly handles regularization functions. Our proposal is based crucially on the fact that the stochastic Polyak step size can be motivated with stochastic proximal point for a truncated linear model of the objective function (we explain this in detail in \cref{sec:model-based-view}).
Our method has closed-form updates for squared $\ell_2$-regularization. We provide theoretical guarantees for \texttt{ProxSPS} for any closed, proper, and convex regularization function (including indicator functions for constraints). Our main results, \cref{thm:convex-smooth-reg} and \cref{thm:exact-nonconv-reg}, also give new insights for \spsmax{}, in particular showing exact convergence for convex and non-convex settings.

\textbf{Lower bounds and regularization.} Methods such as \spsmax{} need to estimate a lower bound $C(s)$ for each loss function $f(\cdot;s)$. Though  $\inf_x f(x;s)$ can be precomputed in some restricted settings, in practice the lower bound $C(s)=0$ is used for non-negative loss functions.\footnote{See for instance~\url{https://github.com/IssamLaradji/sps}.} 
The tightness of the choice $C(s)$ is further reflected in the constant $\sigma^2:= \min f - \E_P[C(S)]$, which affects the convergence guarantees of \spsmax{} \citep{Orvieto2022}.

\emph{Contribution.} For regularized problems \eqref{prob:composite} and if $\phi$ is differentiable, the current proposal of \spsmax{} would add $\phi$ to every loss function $f(\cdot;s)$. In this case, for non-negative regularization terms, such as the squared $\ell_2$-norm, the lower bound $C(s)=0$ is always loose. Indeed, if $\phi\geq 0$, then $\inf_{x\in \R^n}(f(x;s) + \phi(x)) \geq \inf_{x\in\R^n} f(x;s)$ and this inequality is strict in most practical scenarios.
For our proposed method \texttt{ProxSPS}, we now need only estimate a lower bound for the loss $f(x;s)$ and not for the composite function $f(x;s) + \phi(x)$. Further, \texttt{ProxSPS} decouples the adaptive step size for the gradient of the loss from the regularization (we explain this in detail in \cref{sec:speciall2case} and \cref{fig:illustrations}).

\textbf{Proximal and adaptive methods.} The question on how to handle regularization terms has also been posed for other families of adaptive methods. For \texttt{Adam} \citep{Kingma2015} with $\ell_2$-regularization it has been observed that it generalizes worse and is harder to tune than  
\texttt{AdamW} \citep{Loshchilov2019} which uses weight decay.
Further, \texttt{AdamW} can be seen as an approximation to a proximal version of \texttt{Adam} \citep{Zhuang2022}.\footnote{For \texttt{SGD} treating $\ell_2$-regularization as a part of the loss can be seen to be equivalent to its proximal version (cf.\ \cref{sec:sgd-equiv}).}
On the other hand, \citet{Loizou2021} showed that -- without regularization -- default hyperparameter settings for \spsmax{} give very encouraging results on matrix factorization and image classification tasks. This is  promising since it suggests that \spsmax{} is an \emph{adaptive} method, and can work well across varied tasks without the need for extensive hyperparameter tuning.

\emph{Contribution.} 
%
%
We show that by handling $\ell_2$-regularization using a proximal step, our resulting \texttt{ProxSPS} is less sensitive to hyperparameter choice as compared to \spsmax. 
This becomes apparent in matrix factorization problems, where \texttt{ProxSPS} converges for a much wider range of regularization parameters and learning rates, while \spsmax{} 
is more sensitive to these settings. 
We also show similar results for image classification over the \texttt{CIFAR10} and \texttt{Imagenet32} dataset when using a \texttt{ResNet} model, where, compared to \texttt{AdamW}, our method is less sensitive with respect to the regularization parameter.

The remainder of our paper is organized as follows: we will first recall how the stochastic Polyak step size, in the case of $\phi=0$, can be derived using the model-based approach of \citep{Asi2019, Davis2019} and how this is connected to \spsmax{}. We then derive \texttt{ProxSPS} based on the connection to model-based methods, and present our theoretical results, based on the proof techniques in \citep{Davis2019}.
\section{Preliminaries}
\subsection{Notation}
Throughout, we will write $\E$ instead of $\E_P$. For any random variable $X(s)$, we denote $\E[X(S)] := \int_\mathcal{S} X(s) dP(s)$.
We denote $(\cdot)_+:=\max\{\cdot,0\}$. We write $\mathcal{\tilde O}$ when we drop logarithmic terms in the $\mathcal{O}$-notation, e.g. $\mathcal{\tilde O}(\frac{1}{K}) = \mathcal{O}(\frac{\ln(1+K)}{K})$. 
\subsection{General assumptions}
Throughout the article, we assume the following:
\begin{assumption}
\label{assum:inf-samples}
It is possible to generate infinitely many i.i.d.\ realizations $S_1,S_2,\dots$ from $\mathcal{S}$.
\end{assumption} 
\begin{assumption}
\label{assum:lower-bound}
For every $s\in\mathcal{S}$, $\inf_x f(x;s)$ is finite and there exists  $C(s)$ satisfying $C(s)\;\leq \;\inf_x f(x;s).$
\end{assumption} 
In many machine learning applications, non-negative loss functions are used and thus we can satisfy the second assumption choosing $C(s)=0$ for all $s\in\mathcal{S}$.
\subsection{Convex analysis}
Let $h:\R^n\to \R$ be convex and $\alpha >0$. The proximal operator is given by 
\begin{align*}
    \prox{\alpha h}(x) := \argmin_y h(y) + \frac{1}{2\alpha}\|y-x\|^2.
\end{align*}
Further, the Moreau envelope is defined by $\mathrm{env}^\alpha_h(x) := \min_y h(y) + \frac{1}{2\alpha}\|y-x\|^2$,
and its derivative is $\nabla \mathrm{env}^\alpha_h(x) = \frac{1}{\alpha}(x-\prox{\alpha h}(x))$ \citep[Lem.\ 2.1]{Drusvyatskiy2019}. Moreover, due to the optimality conditions of the proximal operator, if $h\in \mathcal{C}^1$ then 
\begin{align}
\label{eqn:env-gradient-norm}
    \hat x = \prox{\alpha h}(x) \Longrightarrow   \|\nabla h(\hat x)\| = \alpha^{-1}\|x-\hat x\| = \|\nabla \mathrm{env}^\alpha_h(x)\|.
\end{align}

\citet{Davis2019} showed how to use the Moreau envelope as a measure of stationarity: 
if $\|\nabla \mathrm{env}^\alpha_h(x)\|$ is small, then $x$ is close to $\hat{x}$ and $\hat{x}$ is an almost stationary point of $h$. Formally, the gradient of the Moreau envelope can be related to the gradient mapping (cf.\ \citep[Thm.\ 4.5]{Drusvyatskiy2019} and \cref{lem:env-gradient-mapping}).

 We say that a function $h:\R^n\to \R$ is $L$-smooth if its gradient is $L$--Lipschitz, that is
\begin{equation}\label{eq:Lsmooth}
    \|\nabla h(x) - \nabla h(y) \| \; \leq \; L \| x-y\|, \quad \forall x,y \in \R^n.
\end{equation}
If $h$ is $L$-smooth, then
\begin{align*}
    h(y) \leq h(x) + \iprod{\nabla h(x)}{y-x} +\frac{L}{2}\|y-x\|^2 \quad \text{for all } x,y, \in \R^n.
\end{align*}
A function $h:\R^n\to \R$ is $\rho$--weakly convex for $\rho\geq 0$ if $h+\tfrac{\rho}{2}\|\cdot\|^2$ is convex. Any $L$--smooth function is weakly convex with parameter less than or equal to $L$ \citep[Lem.\ 4.2]{Drusvyatskiy2019}. The above results on the proximal operator and Moreau envelope can immediately be extended to $h$ being $\rho$--weakly convex if $\alpha \in (0,\rho^{-1})$, since then $h+\tfrac{\rho}{2}\|\cdot\|^2$ is convex.
     
If we assume that each $f(\cdot;s)$ is $\rho_s$-weakly convex for $\rho_s\geq 0$, then applying \citep[Lem. 2.1]{Bertsekas1973} to the convex function $f(\cdot;s)+\tfrac{\rho_s}{2}\|\cdot\|^2$ yields that $f+ \tfrac{\rho}{2}\|\cdot\|^2$ is convex and thus $f$ is $\rho$--weakly convex for $\rho:=\E[\rho_S]$. In particular, $f$ is convex if each $f(\cdot;s)$ is assumed to be convex. For a weakly convex function $h$, we denote with $\partial h$ the regular subdifferential (cf.\ \citep[section 2.2]{Davis2019} and \citep[Def.\ 8.3]{Rockafellar1998}).
\section{The unregularized case}
\label{sec:unregularized}
For this section, consider problems of form \eqref{prob:unreg}, i.e. no regularization term $\phi$ is added to the loss $f$.
\subsection{A model-based view point}
\label{sec:model-based-view}
Many classical methods for solving \eqref{prob:unreg} can be summarized by model-based stochastic proximal point: in each iteration, a model $f_x(\cdot;s)$ is constructed approximating $f(\cdot;s)$ locally around $x$. With $S_k\sim P$ being drawn at random, this yields the update
\begin{align}
\label{eqn:model-spp-unreg}
    x^{k+1} = \arg \min_{y\in \R^n} f_{x^k}(y;S_k) + \frac{1}{2\alpha_k} \|y-x^k\|^2.
\end{align}
The theoretical foundation for this family of methods has been established by \citet{Asi2019} and \citet{Davis2019}. They give the following three models as examples:
\begin{enumerate}[label=(\roman*)]
    \item \textit{Linear:} $f_x(y;s) := f(x;s) + \langle g, y-x\rangle$ with $g\in \partial f(x;s)$.
    \item \textit{Full:} $f_x(y;s) := f(y;s)$.
    \item \textit{Truncated:} $f_x(y;s) := \max\{f(x;s) + \langle g, y-x\rangle, \inf_{z\in\R^n} f(z;s)\}$ where $g\in \partial f(x;s)$.
\end{enumerate}
It is easy to see that update \eqref{eqn:model-spp-unreg} for the \textit{linear model} is equal to \eqref{eqn:sgd} while the \textit{full model} results in the stochastic proximal point method. 
For the \textit{truncated model}, \eqref{eqn:model-spp-unreg} results in the update 
\begin{align}
\label{eqn:update-truncated}
    x^{k+1} = x^k - \min\Big\{\alpha_k, \frac{f(x^k;S_k) - \inf_{z\in\Rn} f(z;S_k)}{\|g_k\|^2}\Big\}g_k, \quad g_k \in \partial f(x^k,S_k).
\end{align}
More generally, one can replace the term $\inf_{x\in\R^n} f(x;S_k)$ with an arbitrary lower bound of $f(\cdot;S_k)$ (cf.\ \cref{lem:sps-update-unreg}). The model-based stochastic proximal point method for the truncated model is given in \cref{alg:sps-unreg}. 
The connection between the truncated model and the method depicted in \eqref{eqn:update-truncated} is not a new insight and has been pointed out in several works (including \citep{Asi2019, Loizou2021} and \citep[Prop.\ 1]{Berrada2019}). 
 For simplicity, we refer to \cref{alg:sps-unreg} as \texttt{SPS} throughout this article. However, it should be pointed out that this acronym (and variations of it) have been used for stochastic Polyak-type methods in slightly different ways \citep{Loizou2021, Gower2021}. 
%
\begin{algorithm}[H]
\caption{\texttt{SPS}}
\label{alg:sps-unreg}
\begin{algorithmic}
\REQUIRE $x^0\in\R^n$, step sizes $\alpha_k > 0$.
\FOR{$k=0,1,2,\dots,K-1$}
\STATE 1. Sample $S_k$ and set $C_k:=C(S_k)$.
\STATE 2. Choose $g_k\in \partial f(x^k;S_k)$. If $g_k=0$, set $x^{k+1}=x^k$. Otherwise, set
\begin{align}
    \label{eqn:update-sps}
    x^{k+1} = x^k - \gamma_k g_k, \quad \gamma_k = \min\Big\{\alpha_k, \frac{f(x^k;S_k) - C_k}{\|g_k\|^2}\Big\}.
\end{align}
\ENDFOR
\RETURN $x^K$
\end{algorithmic}
\end{algorithm}
%
%
%
For instance consider again the  \spsmax{} method
\begin{align}
    \tag{\spsmax}
    x^{k+1} = x^k - \min\Big\{\gamma_b, \frac{f_{i_k}(x^k)-C(s_{i_k})}{c\|\nabla f_{i_k}(x^k)\|^2}\Big\}\nabla f_{i_k}(x^k),
\end{align}
where $c, \gamma_b > 0$. Clearly, for $c=1$ and $\alpha_k=\gamma_b$, update \eqref{eqn:update-sps} is identical to \spsmax. With this in mind, we can interpret the hyperparameter $\gamma_b$ in \spsmax{} simply as a step size for the model-based stochastic proximal point step. For the parameter $c$ on the other hand, the model-based approach motivates the choice $c=1$. In this article, we will focus on this natural choice $c=1$ which also reduces the amount of hyperparameter tuning. However, we should point out that, in the strongly convex case, $c=1/2$ gives the best rate of convergence in \citep{Loizou2021}.
\section{The regularized case}
Now we consider regularized problems of the form \eqref{prob:composite}, i.e.\ 
\begin{align*}
    \min_{x \in \R^n} \psi(x), \quad \psi(x) = f(x) + \phi(x),
\end{align*}
where $\phi: \R^n \to \R \cup \{\infty\}$ is a proper, closed, $\lambda$-strongly convex function for $\lambda\geq 0$ (we allow $\lambda=0$).
For $s\in\mathcal{S}$, denote by $\psi_{x}(\cdot;s)$ a stochastic model of the objective $\psi$ at $x$. We aim to analyze algorithms with the update
\begin{align}
\label{eqn:model-spp-reg}
    x^{k+1} = \arg \min_{x\in \R^n} \psi_{x^k}(x;S_k) + \frac{1}{2\alpha_k} \|x-x^k\|^2,
\end{align}
where $S_k\sim P$ and $\alpha_k>0$. 
Naively, if we know a lower bound $\tilde{C}(s)$ of $f(\cdot;s)+\phi(\cdot)$, the truncated model could be constructed for the function $f(x;s) +\phi(x)$, resulting in 
\begin{align}
\label{eqn:model-sps}
    \psi_x(y;s) = \max\{f(x;s)+\phi(x) + \iprod{g+u}{y-x}, \tilde{C}(s) \}, \quad g\in \partial f(x;s), \quad u \in \partial \phi(x).
\end{align}
In fact, \citet{Asi2019} and \citet{Loizou2021} work in the setting of unregularized problems and hence their approaches would handle regularization in this way.
What we propose instead, is to only truncate a linearization of the loss $f(x;s)$, yielding the model
\begin{align}
\label{eqn:model-sps-prox}
    \psi_x(y;s) = f_x(y;s) + \phi(y), \quad f_x(y;s) = \max\{f(x;s)+ \iprod{g}{y-x}, C(s) \}, \quad g\in \partial f(x;s).
\end{align}
Solving \eqref{eqn:model-spp-reg} with the model in \eqref{eqn:model-sps-prox} results in 
\begin{align}
    \label{eqn:prox-sps-update-general}
    x^{k+1} = \argmin_{y\in \R^n}~~ \max\{f(x^k;S_k)+ \iprod{g_k}{y-x^k}, C(S_k) \} + \phi(y) + \frac{1}{2\alpha_k}\|y-x^k\|^2.
\end{align}
The resulting model-based stochastic proximal point method is given in \cref{alg:proxsps-general} \footnote{For $\phi=0$, \cref{alg:proxsps-general} is identical to \cref{alg:sps-unreg}.}.  \cref{lem:update-prob-general} shows that, if $\prox{\phi}$ is known, update \eqref{eqn:prox-sps-update-general} can be computed by minimizing a strongly convex function over a compact one-dimensional interval. The relation to the proximal operator of $\phi$ motivates the name \texttt{ProxSPS}.  Further,  the  \texttt{ProxSPS} update \eqref{eqn:prox-sps-update-general}  has a closed form solution when $\phi$ is the   squared $\ell_2$-norm,  as we detail in the next section.

   
%
%
\begin{algorithm}[H]
\caption{\texttt{ProxSPS}}
\label{alg:proxsps-general}
\begin{algorithmic}
\REQUIRE $x^0\in\R^n$, step sizes $\alpha_k > 0$.
\FOR{$k=0,1,2,\dots,K-1$}
\STATE 1. Sample $S_k$ and set $C_k:=C(S_k)$.
\STATE 2. Choose $g_k\in \partial f(x^k;S_k)$.\\ \hspace{2ex}
Update $x^{k+1}$ according to \eqref{eqn:prox-sps-update-general}.
\ENDFOR
\RETURN $x^K$
\end{algorithmic}
\end{algorithm}
\begin{figure}
\centering
\begin{subfigure}[t]{0.42\textwidth}
    \includegraphics[width=0.99\textwidth]{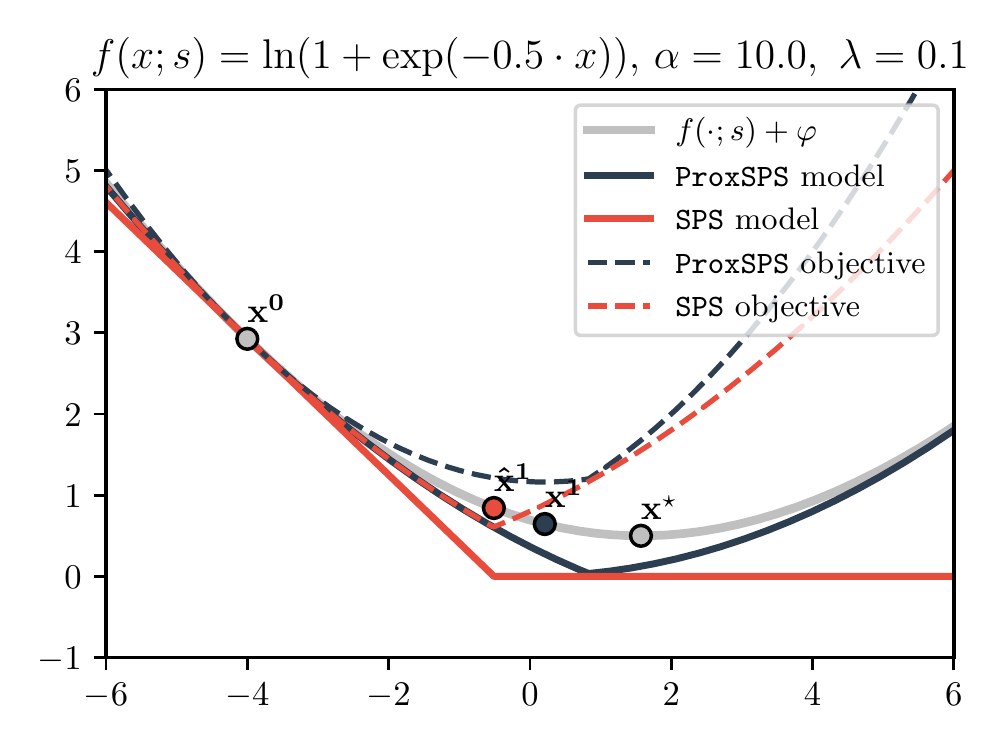}
    \caption{Regularized logistic loss. }
    \label{fig:model-compare}
\end{subfigure}
\begin{subfigure}[t]{0.52\textwidth}
    \includegraphics[width=0.99\textwidth]{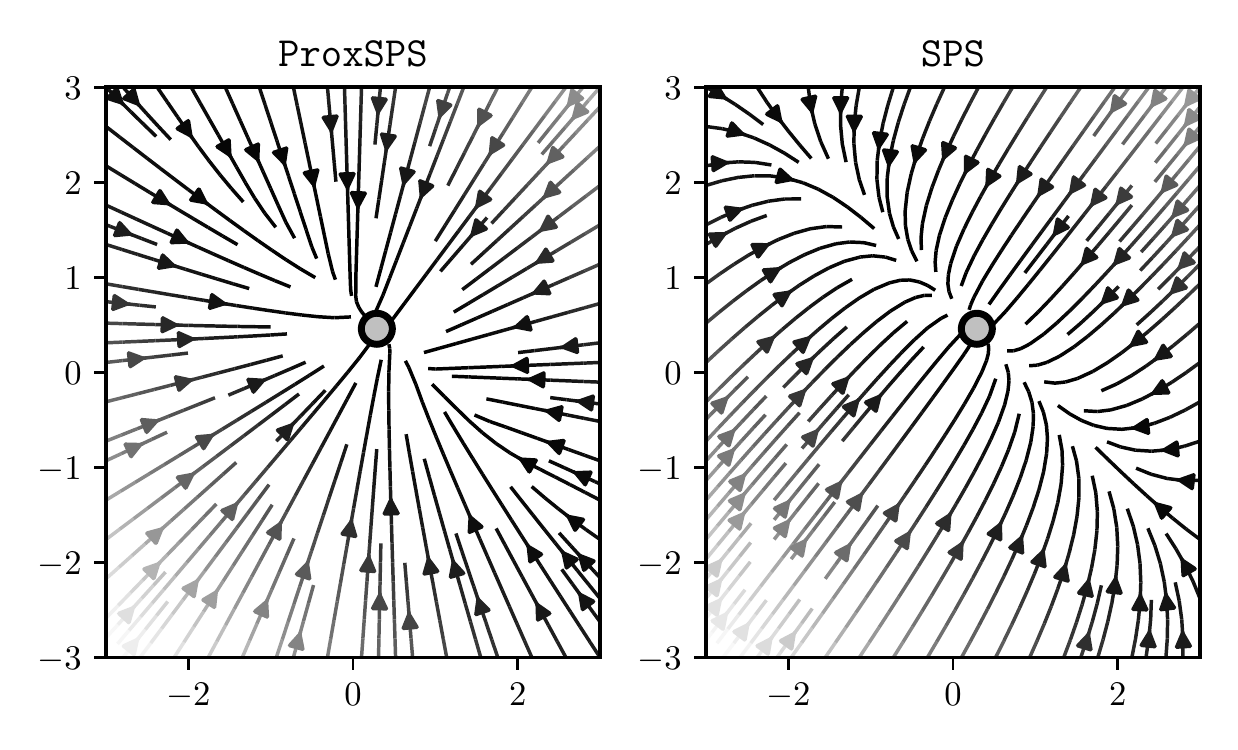}
    \caption{Regularized squared loss with $\alpha_k=1,~\lambda=1$.}
    \label{fig:model-flow}
\end{subfigure}
\caption{a) \texttt{SPS} refers to model \eqref{eqn:model-sps} whereas \texttt{ProxSPS} refers to \eqref{eqn:model-sps-prox}. We plot the corresponding model $\psi_{x^0}(y;s)$ and the objective function of \eqref{eqn:model-spp-reg}. $x^1$ (resp.\ $\hat x^1$) denotes the new iterate for \texttt{ProxSPS} (resp.\ \texttt{SPS}), $x^\star$ is the minimizer of $f(\cdot;s)+\phi$.
b) Streamlines of the vector field $V(x^k):=x^{k+1}-x^k$, for $f(x)=\|Ax-b\|^2$ and for the deterministic update, i.e.\ $f(x;s)=f(x)$. \texttt{ProxSPS} refers to update \eqref{eqn:reg-prox-sps-update} and \texttt{SPS} refers to \eqref{eqn:reg-sps-update}. The circle marks the minimizer of $f(x) + \frac{\lambda}{2}\|x\|^2$.
}
\label{fig:illustrations}
\end{figure}
%
\subsection{The special case of $\ell_2$-regularization}
\label{sec:speciall2case}
When $\phi(x) = \frac{\lambda}{2} \|x\|^2$ for some $\lambda > 0$,  \texttt{ProxSPS}~\eqref{eqn:prox-sps-update-general} has a closed form solution as we show next in~\Cref{lem:prox-sps-update}. 
For this lemma, recall that the proximal operator of $\phi(x) = \frac{\lambda}{2}\|x\|^2$ is given by $\prox{\alpha \phi}(x) = \frac{1}{1+\alpha\lambda}x$ for all  $\alpha > 0,~ x\in \R^n.$
\begin{lemma}
\label{lem:prox-sps-update}
Let $\phi(x)=\frac{\lambda}{2}\|x\|^2$ and let $g \in \partial f(x;s)$ and $C(s) \leq \inf_{z\in\R^n}f(z;s)$ hold for all $s\in\mathcal{S}$. For 
$\psi_{x}(y;s) = f_x(y;s) + \phi(y)$ with $f_x(y;s) = \max\{f(x;s) + \iprod{g}{y-x} , C(s)\}$
consider the update 
\[x^{k+1} = \argmin_{x\in \R^n} \psi_{x^k}(x;S_k) + \frac{1}{2\alpha_k} \|x-x^k\|^2.\]
Denote $C_k:=C(S_k)$ and let $g_k\in \partial f(x^k;S_k)$. Define
\begin{align*}
    \tau_k^+ := \begin{cases} 0 \quad &\text{if } g_k=0,\\ 
    \min\left\{\alpha_k, \left(\frac{(1+\alpha_k\lambda)(f(x^k;S_k) - C_k) - \alpha_k\lambda \iprod{g_k}{x^k}}{\|g_k\|^2}\right)_+\right\} \quad &\text{else.}\end{cases}
\end{align*}
Update~\eqref{eqn:prox-sps-update-general} is given by
\begin{equation}\label{eqn:prox-sps-update-l2} 
 x^{k+1}\; =\; \frac{1}{1+\alpha_k\lambda} \Big(x^k - \tau_k^+ g_k\Big) = \prox{\alpha_k \phi}(x^k - \tau_k^+ g_k). 
\end{equation} 
\end{lemma}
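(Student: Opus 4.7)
I would handle the case $g_k=0$ separately, then attack $g_k\neq 0$ via a minimax reformulation of the $\max$ that appears in the model.

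If $g_k=0$, the linearization $L(y):=f(x^k;S_k)+\langle g_k,y-x^k\rangle$ reduces to the constant $f(x^k;S_k)\geq C_k$, so $\max\{L(y),C_k\}=f(x^k;S_k)$ is independent of $y$. The update then becomes $x^{k+1}=\argmin_y\,\tfrac{\lambda}{2}\|y\|^2+\tfrac{1}{2\alpha_k}\|y-x^k\|^2=\prox{\alpha_k\phi}(x^k)=\tfrac{1}{1+\alpha_k\lambda}x^k$, which matches the formula with $\tau_k^+=0$.

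For $g_k\neq 0$, the key idea is to rewrite $\max\{a,b\}=\max_{\theta\in[0,1]}\theta a+(1-\theta)b$ applied to $a=L(y)$, $b=C_k$. The resulting inner function is linear in $\theta$ and strongly convex in $y$ (uniformly in $\theta$ on the compact set $[0,1]$), so Sion's minimax theorem permits swapping $\min_y$ and $\max_\theta$. For fixed $\theta$, the inner minimization is a strongly convex quadratic whose unique solution is
\begin{equation*}
y(\theta)=\argmin_{y}\,\theta L(y)+(1-\theta)C_k+\tfrac{\lambda}{2}\|y\|^2+\tfrac{1}{2\alpha_k}\|y-x^k\|^2=\frac{x^k-\theta\alpha_k g_k}{1+\alpha_k\lambda}=\prox{\alpha_k\phi}(x^k-\theta\alpha_k g_k),
\end{equation*}
using that $\prox{\alpha_k\phi}(z)=\tfrac{z}{1+\alpha_k\lambda}$. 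Thus $x^{k+1}=\prox{\alpha_k\phi}(x^k-\tau^\star g_k)$ where $\tau^\star:=\theta^\star\alpha_k\in[0,\alpha_k]$ and $\theta^\star$ solves the outer maximization.

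It remains to identify $\theta^\star$ with $\tau_k^+/\alpha_k$. Substituting $y(\theta)$ into the saddle objective and expanding $\tfrac{\lambda}{2}\|y(\theta)\|^2+\tfrac{1}{2\alpha_k}\|y(\theta)-x^k\|^2$ gives, after the usual algebraic cancellations, a concave quadratic in $\theta$ of the form $q(\theta)=A\theta-\tfrac{B}{2}\theta^2+\text{const}$, with $B=\alpha_k\|g_k\|^2/(1+\alpha_k\lambda)>0$ and
\begin{equation*}
A=f(x^k;S_k)-C_k-\frac{\alpha_k\lambda}{1+\alpha_k\lambda}\langle g_k,x^k\rangle.
\end{equation*}
The unconstrained maximizer $A/B$ equals $\bigl[(1+\alpha_k\lambda)(f(x^k;S_k)-C_k)-\alpha_k\lambda\langle g_k,x^k\rangle\bigr]/\bigl(\alpha_k\|g_k\|^2\bigr)$, so projecting onto $[0,1]$ and multiplying by $\alpha_k$ produces exactly the expression for $\tau_k^+$ in the statement (the $(\cdot)_+$ handles clipping at $0$ and the $\min\{\alpha_k,\cdot\}$ handles clipping at $\alpha_k$).

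\textbf{Main obstacle.} The only nontrivial step is the algebra that reduces the outer objective to the clean quadratic $q(\theta)$; one has to be careful with the cross-terms from $\|y(\theta)\|^2$ and $\|y(\theta)-x^k\|^2$ so that the $\alpha_k\lambda\langle g_k,x^k\rangle$ term assembles correctly. A slightly more geometric alternative, which avoids minimax, is a three-case analysis on whether the unconstrained minimizer of $L(y)+\tfrac{\lambda}{2}\|y\|^2+\tfrac{1}{2\alpha_k}\|y-x^k\|^2$ lies in the halfspace $\{L\geq C_k\}$, whether the minimizer of $C_k+\tfrac{\lambda}{2}\|y\|^2+\tfrac{1}{2\alpha_k}\|y-x^k\|^2$ lies in $\{L\leq C_k\}$, or neither (in which case the minimum lies on the affine boundary $\{L=C_k\}$ and a single Lagrange multiplier on that equality constraint reproduces the same $\tau_k^+$); I would only fall back on this if the minimax route got messy.
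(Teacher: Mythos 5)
Your argument is correct, but it takes a genuinely different route from the paper's. The paper (\cref{lem:prox-sps-update-long}) performs exactly the ``geometric alternative'' you sketch as a fallback: it solves three subproblems --- one where the truncation $C_k$ is active, one where the linearization $L(y):=f(x^k;S_k)+\iprod{g_k}{y-x^k}$ is active, and one on the boundary $\{L=C_k\}$ via KKT with a multiplier $\mu$ --- observes that all three candidates have the form $y(t)=\tfrac{1}{1+\alpha_k\lambda}(x^k-tg_k)$, and uses monotonicity of $t\mapsto L(y(t))$ to decide which case occurs, concluding $\tau_k^+=\min\{\alpha_k,(\mu)_+\}$. Your minimax route --- writing $\max\{L(y),C_k\}=\max_{\theta\in[0,1]}\bigl(\theta L(y)+(1-\theta)C_k\bigr)$, swapping $\min$ and $\max$ by Sion (valid: the objective is affine in $\theta$ on the compact set $[0,1]$ and strongly convex in $y$), and projecting the unconstrained maximizer $A/B$ of the concave dual quadratic onto $[0,1]$ --- reaches the same $\tau_k^+$ in one stroke; your $\alpha_k\theta^\star$ is precisely the paper's multiplier $\mu$ clipped to $[0,\alpha_k]$, and the algebra you flag as the main obstacle does work out, since by Danskin $q'(\theta)=L(y(\theta))-C_k$ is affine in $\theta$ with exactly the slope $-B$ and intercept $A$ you state. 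Two small points: to conclude $x^{k+1}=y(\theta^\star)$ (rather than mere equality of optimal values) you should add one line noting that, since min--max equals max--min and both are attained, $(x^{k+1},\theta^\star)$ is a saddle point, hence $x^{k+1}$ minimizes the strongly convex $\theta^\star L(\cdot)+(1-\theta^\star)C_k+\phi(\cdot)+\tfrac{1}{2\alpha_k}\|\cdot-x^k\|^2$, whose unique minimizer is $y(\theta^\star)$; and the paper's case analysis additionally yields the model-value identity \eqref{eqn:f-val-reg-2} used later in the appendix, which your approach would recover from $L(y(\theta^\star))$ with a little extra bookkeeping.
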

See~\cref{lem:prox-sps-update-long} in the appendix for an extended version of the above lemma and its proof.
The update~\eqref{eqn:prox-sps-update-l2} can be naturally decomposed into two steps,  one stochastic gradient step with an adaptive stepsize,  that is $\bar{x}^{k+1} =x^k - \tau_k^+ g_k$ followed by a proximal step $x^{k+1} = \prox{\alpha_k \phi}(\bar{x}^{k+1}).$ This decoupling into two steps, makes it easier to interpret the effect of each step,  with $\tau_k^+$ adjusting for the scale/curvature and the following proximal step shrinking the resulting parameters.  There is no clear separation of tasks if we apply the \texttt{SPS} method to the regularized problem, as we see next.

%
\begin{algorithm}[H]
\caption{\texttt{ProxSPS} for $\phi=\frac{\lambda}{2}\|\cdot\|^2$}
\label{alg:proxsps-l2}
\begin{algorithmic}
\REQUIRE $x^0\in\R^n$, step sizes $\alpha_k > 0$.
\FOR{$k=0,1,2,\dots,K-1$}
\STATE 1. Sample $S_k$ and set $C_k:=C(S_k)$.
\STATE 2. Choose $g_k\in \partial f(x^k;S_k)$. If $g_k=0$, set $x^{k+1} = \tfrac{1}{1+\alpha_k \lambda}x^k$. Otherwise, set
\begin{align*}
    x^{k+1} = \frac{1}{1+\alpha_k\lambda} \Big[x^k - \min\left\{\alpha_k, \left(\frac{(1+\alpha_k\lambda)(f(x^k;S_k) - C_k) - \alpha_k\lambda \iprod{g_k}{x^k}}{\|g_k\|^2}\right)_+\right\}g_k\Big].
\end{align*}
\ENDFOR
\RETURN $x^K$
\end{algorithmic}
\end{algorithm}
%
%
%
\subsection{Comparing the model of \texttt{SPS} and \texttt{ProxSPS} }
\label{sec:sps-max-reg-naive}
For simplicity, assume again the discrete sample space setting \eqref{eqn:ER} with differentiable loss functions $f_i$ and let $\phi=\frac{\lambda}{2}\|\cdot\|^2$. Clearly, the composite problem \eqref{prob:composite} can be transformed to an instance of \eqref{prob:unreg} by setting $\ell_i(x):= f_i(x) + \frac{\lambda}{2}\|x\|^2$ and solving
$\min_x \ell(x)$ with $\ell(x) := \frac{1}{N}\sum_{i=1}^N \ell_i(x)$.
Assume that a lower bound $\underline{\ell}_i \leq \inf_x \ell_i(x)$ is known. In this case~\eqref{eqn:model-sps} becomes
\begin{align*}
    \psi_{x}(y;s_i) = \max\Big\{f_{i}(x)+\tfrac{\lambda}{2}\|x\|^2 + \iprod{\nabla f_i(x)+ \lambda x}{y-x} ,~ \underline{\ell}_i\Big\}.
\end{align*}
Due to \cref{lem:sps-update-unreg}, if $\nabla f_{i_k}(x^k) + \lambda x^k \neq 0$, the update \eqref{eqn:model-spp-reg} is given by
\begin{align}
\label{eqn:reg-sps-update}
x^{k+1} = x^k - \min\Big\{\alpha_k, \frac{f_{i_k}(x^k)+\frac{\lambda}{2}\|x^k\|^2-\underline{\ell}_{i_k}}{\|\nabla f_{i_k}(x^k) + \lambda x^k\|^2}\Big\}(\nabla f_{i_k}(x^k) + \lambda x^k).
\end{align}
We refer to this method, which is using model \eqref{eqn:model-sps}, as \texttt{SPS}. 
On the other hand, using model \eqref{eqn:model-sps-prox} and if $\nabla f_{i_k}(x^k) \neq 0$, the update of \texttt{ProxSPS}~\eqref{eqn:prox-sps-update-l2} is
\begin{align}
    \label{eqn:reg-prox-sps-update}
    x^{k+1} = \tfrac{1}{1+\alpha_k\lambda} \Big[x^k - \min\left\{\alpha_k, \left(\tfrac{(1+\alpha_k\lambda)(f_{i_k}(x^k) - C(s_{i_k})) - \alpha_k \lambda \iprod{\nabla f_{i_k}(x^k)}{x^k}}{\|\nabla f_{i_k}(x^k)\|^2}\right)_+\right\}\nabla f_{i_k}(x^k)\Big].
\end{align}
In \cref{fig:model-compare}, we illustrate the two models \eqref{eqn:model-sps} (denoted by \texttt{SPS}) and \eqref{eqn:model-sps-prox} (denoted by \texttt{ProxSPS}) for the logistic loss with squared $\ell_2$-regularization. We can see that the \texttt{ProxSPS} model is a much better approximation of the (stochastic) objective function as it still captures the quadratic behaviour of $\phi$.  Furthermore, as noted in the previous section,  \texttt{ProxSPS} decouples the step size of the gradient and of the shrinkage,
 and hence the update direction depends on $\alpha_k$.  In contrast,  the update direction of \texttt{SPS} does not depend on $\alpha_k$,  and the regularization effect is intertwined with the adaptive step size.   
Another way to see that the model~\eqref{eqn:model-sps-prox} on which  \texttt{ProxSPS}  is based on is a more accurate model as compared to the \texttt{SPS} model~\eqref{eqn:model-sps},  is that the resulting  vector field of \texttt{ProxSPS} takes a more direct route to the minimum, as illustrated in \cref{fig:model-flow}.
 
Update \eqref{eqn:reg-prox-sps-update} needs to compute the term $\iprod{\nabla f_{i_k}(x^k)}{x^k}$ while \eqref{eqn:reg-sps-update} needs to evaluate $\|x^k\|^2$. Other than that, the computational costs are roughly identical. For \eqref{eqn:reg-prox-sps-update}, a lower bound $\underline{\ell}_i$ is required. For non-negative loss functions, in practice both $\underline{\ell}_i $ and $C(s_i)$ are often set to zero, in which case \eqref{eqn:model-sps-prox} will be a more accurate model as compared to~\eqref{eqn:model-sps}. \footnote{For single element sampling,  $\inf \ell_i$ can sometimes be precomputed (e.g.\ regularized logistic regression, see \citep[Appendix D]{Loizou2021}).  But even in this restricted setting it is not clear how to estimate $\inf \ell_i$ when using mini-batching. }

%
%
%
%
%
\subsection{Convergence analysis} \label{sec:proxsps-theory}
%
For the convergence analysis of \cref{alg:proxsps-general}, we can work with the following assumption on $\phi$.
\begin{assumption}
\label{asum:phi}
 $\phi:\R^n\to \R\cup \{\infty\}$ is a proper, closed, $\lambda$-strongly convex function with $\lambda\geq0$.
\end{assumption}
Throughout this section we consider model \eqref{eqn:model-sps-prox}, i.e.\ for $g\in \partial f(x;s)$, let
\begin{align*}
    \psi_x(y;s) = f_x(y;s) + \phi(y), \quad f_x(y;s) = \max\{f(x;s)+ \iprod{g}{y-x}, C(s) \}.
\end{align*}
Let us first state a lemma on important properties of the truncated model:
\begin{lemma}
\label{lem:sps-model-properties}
Consider $f_{x}(y;s) = \max\{f(x;s) + \iprod{g}{y-x} , C(s)\}$,
where $g \in \partial f(x;s)$ is arbitrary and $C(s) \leq \inf_{z\in\R^n}f(z;s)$. Then, it holds:
\begin{enumerate}[label=(\roman*)]
    \item \label{lem:sps-model-properties-i} The mapping $y\mapsto f_{x}(y;s)$ is convex.
    \item \label{lem:sps-model-properties-ii} For all $x\in \R^n$, it holds $f_{x}(x;s) = f(x;s)$. 
    If $f(\cdot;s)$ is $\rho_s$--weakly convex for all $s\in\mathcal{S}$, then
    \[f_{x}(y;s) \leq f(y;s)+\tfrac{\rho_s}{2}\|y-x\|^2 \quad \text{for all } x,y\in\R^n.\]
\end{enumerate}
\end{lemma}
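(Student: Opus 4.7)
The plan is to handle the three claims in the order (i), the equality in (ii), and then the inequality in (ii), with the last being the only item requiring any real work.

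For (i), I would simply invoke the fact that the pointwise maximum of two convex functions is convex: the map $y \mapsto f(x;s) + \langle g, y-x\rangle$ is affine (hence convex) and $y \mapsto C(s)$ is constant, so their max is convex in $y$.

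For the equality $f_x(x;s) = f(x;s)$ in (ii), plug $y = x$ into the definition. The linear term collapses, leaving $\max\{f(x;s),C(s)\}$, and the lower bound assumption $C(s) \leq \inf_z f(z;s) \leq f(x;s)$ makes this $f(x;s)$.

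The main content is the weak-convexity inequality. The approach is to bound each argument of the maximum separately by $f(y;s) + \tfrac{\rho_s}{2}\|y-x\|^2$ and then take the max. For the constant term, $C(s) \leq \inf_z f(z;s) \leq f(y;s) \leq f(y;s) + \tfrac{\rho_s}{2}\|y-x\|^2$ since $\rho_s \geq 0$. For the affine term, the key step is to translate $g \in \partial f(x;s)$ into a subgradient of the convexified function $h(\cdot) := f(\cdot;s) + \tfrac{\rho_s}{2}\|\cdot\|^2$, giving $g + \rho_s x \in \partial h(x)$. Applying the standard convex subgradient inequality to $h$ and then completing the square via the identity
\[
\tfrac{\rho_s}{2}\|y\|^2 - \tfrac{\rho_s}{2}\|x\|^2 - \rho_s\langle x, y-x\rangle \;=\; \tfrac{\rho_s}{2}\|y-x\|^2
\]
yields the desired bound $f(x;s) + \langle g, y-x\rangle \leq f(y;s) + \tfrac{\rho_s}{2}\|y-x\|^2$. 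Combining the two bounds inside the max completes the proof.

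There is no real obstacle here; the only subtle point is being careful that the regular subdifferential $\partial f(\cdot;s)$ for weakly convex $f(\cdot;s)$ actually transforms as claimed under addition of a smooth quadratic, which is standard (see the references cited after \eqref{eq:Lsmooth}) and valid for both parts of the lemma statement.
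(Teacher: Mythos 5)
Your proposal is correct and follows essentially the same route as the paper: convexity of the max of an affine and a constant function for (i), direct substitution plus the lower-bound assumption for the equality in (ii), and the weak-convexity subgradient inequality $f(x;s)+\iprod{g}{y-x}\leq f(y;s)+\tfrac{\rho_s}{2}\|y-x\|^2$ followed by taking the max for the final bound. The only difference is that you spell out the derivation of that subgradient inequality (via the convexified function and completing the square), which the paper simply asserts as a known consequence of weak convexity.
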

\begin{proof}
\begin{enumerate}[label=(\roman*)]
    \item The maximum over a constant and linear term is convex.
    \item Recall that $C(s)\leq f(y;s)$ for all $y\in\R^n$. Therefore, 
    $f_{x}(x;s) = \max\{C(s), f(x;s)\} = f(x;s).$
    From weak convexity of $f(\cdot;s)$ it follows $f(x;s)+\iprod{g}{y-x}\leq f(y;s) + \tfrac{\rho_s}{2}\|y-x\|^2$ and therefore
    \[f_{x}(y;s) \leq \max\{C(s), f(y;s) + \tfrac{\rho_s}{2}\|y-x\|^2\} = f(y;s)+ \tfrac{\rho_s}{2}\|y-x\|^2 \quad \text{for all } y\in\R^n.\]
\end{enumerate}
\end{proof}
\subsubsection{Globally bounded subgradients}
In this section, we show that the results for stochastic model-based proximal point methods in \citet{Davis2019} can be immediately applied to our specific model -- even though this model has not been explicitly analyzed in their article. This, however, requires assuming that the  subgradients are bounded. 
\begin{proposition} \label{prop:one-sided-model-reg}
Let \cref{asum:phi} hold and assume that there is an open, convex set $U$ containing $\mathrm{dom}~\phi$.
Let $f(\cdot;s)$ be $\rho_s$--weakly convex for all $s\in\mathcal{S}$ and let $\rho=\E[\rho_S]$. 
Assume that there exists $G_s \in \R_+$ for all $s\in \mathcal{S}$, such that $\mathsf{G}:=\sqrt{\E[G_S^2]}< \infty$ and 
\begin{align}\label{eqn:glob-bounded-grads}
        \|g(x;s)\| \leq G_s \quad \forall g(x;s) \in \partial f(x;s),~ \forall x \in U.
\end{align}
Then, $\psi_{x}(y;s)$ satisfies \citep[Assum.\ B]{Davis2019},
in particular it holds
\begin{align}\label{eq:davis-b4-translated}
    f_{x}(x;s) - f_{x}(y;s)  \leq G_s \|x-y\| \quad \text{for all}~ s\in \mathcal{S} \text{ and all } x,y\in U.
\end{align}
\end{proposition}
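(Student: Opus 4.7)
The plan is to verify each item of Davis-Drusvyatskiy's Assumption B for the model $\psi_x(\cdot;s) = f_x(\cdot;s) + \phi(\cdot)$, and in doing so single out the Lipschitz-type bound \eqref{eq:davis-b4-translated} as the main new calculation. The other three ingredients (the ``model equals function at the anchor'', the ``one-sided upper bound by a quadratic'', and the ``weak convexity of the model'') have already been prepared by \cref{lem:sps-model-properties} and by \cref{asum:phi}. Specifically, $f_x(x;s) = f(x;s)$ and $f_x(y;s) \leq f(y;s) + \tfrac{\rho_s}{2}\|y-x\|^2$ come directly from \cref{lem:sps-model-properties}\ref{lem:sps-model-properties-ii}, while the weak convexity of $\psi_x(\cdot;s)$ follows from combining the convexity of $y \mapsto f_x(y;s)$ (\cref{lem:sps-model-properties}\ref{lem:sps-model-properties-i}) with the $\lambda$-strong convexity of $\phi$ with $\lambda\geq 0$. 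Integrability of the Lipschitz constants in \citep[Assum.\ B]{Davis2019} reduces to $\mathsf{G} = \sqrt{\E[G_S^2]}<\infty$, which is part of the hypothesis.

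The main step is to derive \eqref{eq:davis-b4-translated}. My proof idea is one line: since $f_x(y;s)$ is the pointwise maximum of the affine $y\mapsto f(x;s)+\langle g, y-x\rangle$ and the constant $C(s)$, we have the lower bound
\begin{equation*}
    f_x(y;s) \;\geq\; f(x;s) + \langle g, y-x\rangle.
\end{equation*}
Subtracting this from $f_x(x;s) = f(x;s)$ gives
\begin{equation*}
    f_x(x;s) - f_x(y;s) \;\leq\; \langle g, x-y\rangle \;\leq\; \|g\|\,\|x-y\| \;\leq\; G_s\,\|x-y\|,
\end{equation*}
where the last inequality uses the hypothesis $\|g\|\leq G_s$ valid for any $g \in \partial f(x;s)$ at any $x\in U$. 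Since $\mathrm{dom}\,\phi\subseteq U$ and $x,y$ in \eqref{eq:davis-b4-translated} are taken in $U$, the bound is applicable at the base point $x$.

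The only subtlety I anticipate is bookkeeping. First, the bound above only controls one sign; Davis--Drusvyatskiy's B4 is typically stated as a two-sided Lipschitz estimate, but because the relevant usage in the convergence arguments is the one-sided comparison at the base point (used to bound the decrease $\psi_{x^k}(x^k;S_k)-\psi_{x^k}(x^{k+1};S_k)$), the statement in the proposition matches what is actually needed; this is presumably why the authors highlight \eqref{eq:davis-b4-translated} rather than a two-sided version. Second, to apply the bound on arbitrary $g\in\partial f(x;s)$ one needs the Clarke subdifferential to be well-defined at $x$, which holds on the open set $U$ by the local Lipschitz continuity of $f(\cdot;s)$ implied by the uniform subgradient bound \eqref{eqn:glob-bounded-grads}. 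With these pieces assembled, all items of \citep[Assum.\ B]{Davis2019} follow, completing the proof.
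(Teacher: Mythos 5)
Your proof is correct and follows essentially the same route as the paper's: properties (B1)--(B3) are dispatched by \cref{lem:sps-model-properties} and \cref{asum:phi}, and (B4) is obtained from the lower bound $f_x(y;s) \geq f(x;s) + \iprod{g}{y-x}$ together with $f_x(x;s) = f(x;s)$ and Cauchy--Schwarz, exactly as in the paper's appendix. No gaps.
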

\begin{remark}
    We state all four properties (B1)--(B4) of \citep[Assum.\ B]{Davis2019} explicitly in the Appendix, see \cref{prop:one-sided-model-reg-appendix} which also contains the proof. The first three properties follow immediately in our setting. Only the last property (B4), stated in \eqref{eq:davis-b4-translated}, requires the additional assumption \eqref{eqn:glob-bounded-grads}.
\end{remark}
%
%
\begin{corollary}[Weakly convex case]\label{cor:davis-weakly-convex}
Let the assumptions of \cref{prop:one-sided-model-reg} hold with $\rho_s > 0$ for all $s\in\mathcal{S}$. Let $\rho = \E[\rho_S] < \infty$ and let $\Delta \geq \env{\psi}^{1/(2\rho)}(x^0) - \min \psi$. Let $\{x^k\}_{k=0,\dots,K}$ be generated by \cref{alg:proxsps-general} for constant step sizes $\alpha_k = \Big(2\rho + \sqrt{\frac{4\rho\mathsf{G}^2K}{\Delta}}\Big)^{-1}$. Then, it holds
\begin{align*}
    \E\|\nabla \env{\psi}^{1/(2\rho)} (x^K_\sim)\|^2 \leq \frac{8\rho\Delta}{K} + 16\mathsf{G}\sqrt{\frac{\rho\Delta}{K}},
\end{align*}
where $x^K_\sim$ is uniformly drawn from $\{x^0,\dots, x^{K-1}\}$.
\end{corollary}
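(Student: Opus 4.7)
The plan is to apply the convergence theorem of \citet{Davis2019} for model-based stochastic proximal point methods in the weakly convex setting, using our \cref{prop:one-sided-model-reg} as the bridge that verifies their assumptions for the truncated-linear-plus-$\phi$ model. Since \cref{prop:one-sided-model-reg} already establishes that $\psi_x(\cdot;s) = f_x(\cdot;s) + \phi(\cdot)$ satisfies Assumption B of \citep{Davis2019}, the bulk of the work is to line up parameters rather than to redo the analysis from scratch.

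First I would record the weak convexity structure: each $f(\cdot;s)$ is $\rho_s$-weakly convex, so by averaging (as discussed in \cref{sec:unregularized}) $f$ is $\rho$-weakly convex with $\rho = \E[\rho_S]$, and since $\phi$ is convex, $\psi = f+\phi$ is $\rho$-weakly convex as well. Consequently $\psi + \rho\|\cdot\|^2$ is convex, so the Moreau envelope $\env{\psi}^{1/(2\rho)}$ is well defined and $\nabla \env{\psi}^{1/(2\rho)}$ exists; moreover $\Delta \geq \env{\psi}^{1/(2\rho)}(x^0)-\min\psi \geq 0$ is a valid initial-gap quantity in the sense of \citep{Davis2019}.

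Next I would invoke the relevant descent inequality from \citep{Davis2019} on $\env{\psi}^{1/(2\rho)}$ for the iterates generated by \cref{alg:proxsps-general}. The argument there shows that for a stepsize $\alpha_k$ satisfying $\alpha_k < 1/(2\rho)$, telescoping over $k=0,\dots,K-1$ and dividing by $K$ yields a bound of the form
\begin{align*}
\frac{1}{K}\sum_{k=0}^{K-1} \E\|\nabla \env{\psi}^{1/(2\rho)}(x^k)\|^2 \;\leq\; \frac{c_1 \Delta}{\alpha_k K (1-2\rho \alpha_k)} + \frac{c_2 \rho \mathsf{G}^2 \alpha_k}{1-2\rho \alpha_k},
\end{align*}
where $c_1,c_2$ are explicit constants coming from the descent lemma (the right-hand side is precisely the one appearing in the weakly convex convergence theorem of \citep{Davis2019}, specialized to our model via \cref{prop:one-sided-model-reg}). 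Observing that the left-hand side is exactly $\E\|\nabla \env{\psi}^{1/(2\rho)}(x^K_\sim)\|^2$ under uniform sampling of $x^K_\sim$ from $\{x^0,\dots,x^{K-1}\}$ gives the desired expectation form.

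Finally I would plug in the prescribed stepsize $\alpha_k=(2\rho + \sqrt{4\rho\mathsf{G}^2 K/\Delta})^{-1}$, which is the standard tuning that balances the two terms above and keeps $\alpha_k < 1/(2\rho)$ automatically, and simplify to get the stated $\tfrac{8\rho\Delta}{K} + 16\mathsf{G}\sqrt{\tfrac{\rho\Delta}{K}}$ rate. The main obstacle, and really the only delicate step, is translating notation faithfully between \citep{Davis2019} (where the weakly convex parameter of the model, the upper bound on subgradients, and the stepsize are defined in a slightly different normalization) and the present setting, and confirming that our $\mathsf{G}=\sqrt{\E[G_S^2]}$ plays exactly the role of their Lipschitz-in-expectation constant in property (B4). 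Once that correspondence is made, the corollary is an immediate specialization of their theorem and no additional analysis is required.
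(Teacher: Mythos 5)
Your proposal matches the paper's proof in substance: the paper also simply verifies \citep[Assum.\ B]{Davis2019} via \cref{prop:one-sided-model-reg} and then invokes \citep[Thm.\ 4.3, eq.\ (4.16)]{Davis2019} with the parameter translation $\eta=0$, $\bar{\rho}=2\rho$, $T=K-1$, $\beta_t=\alpha_k^{-1}$, after which the stated step size and constants drop out. Your additional remarks on the telescoping structure and on $\mathsf{G}$ playing the role of the Lipschitz-in-expectation constant in (B4) are exactly the notational bookkeeping the paper leaves implicit, so no gap remains.
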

\begin{proof}
The claim follows from \cref{prop:one-sided-model-reg} and \citep[Thm.\ 4.3]{Davis2019}, (4.16) setting $\eta=0$, $\bar{\rho}=2\rho$, $T=K-1$ and $\beta_t = \alpha_k^{-1}$.
\end{proof}
\begin{corollary}[(Strongly) convex case]
Let the assumptions of \cref{prop:one-sided-model-reg} hold with $\rho_s = 0$ for all $s\in\mathcal{S}$. Let $\lambda > 0 $ and $x^\star = \arg \min_x \psi(x)$. Let $\{x^k\}_{k=0,\dots,K}$ be generated by \cref{alg:proxsps-general} for step sizes $\alpha_k = \tfrac{2}{\lambda (k+1)}$. Then, it holds
\begin{align*}
    \E\Big[\psi\Big(\tfrac{2}{(K+1)(K+2)-2} \sum_{k=1}^K (k+1)x^k\Big) - \psi(x^\star) \Big] \leq \frac{\lambda}{(K+1)^2}\|x^0-x^\star\|^2 + \frac{8\mathsf{G}^2}{\lambda(K+1)}.
\end{align*}
\end{corollary}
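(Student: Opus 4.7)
The approach mirrors \cref{cor:davis-weakly-convex}: reduce the statement to the strongly convex convergence theorem of \citet{Davis2019}. With $\rho_s = 0$, each $f(\cdot;s)$ is convex, so \cref{lem:sps-model-properties}(ii) sharpens to $f_x(y;s)\le f(y;s)$, i.e.\ the truncated model upper-bounds the loss, while \cref{lem:sps-model-properties}(i) gives convexity of the model in $y$. Combined with $\lambda$-strong convexity of $\phi$, this makes $\psi_x(\cdot;s)$ and $\psi$ both $\lambda$-strongly convex. \cref{prop:one-sided-model-reg} supplies the one-sided bound~\eqref{eq:davis-b4-translated}, completing the hypotheses of Davis--Drusvyatskiy's Assumption B.

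The core step is to establish the one-step recursion
\begin{equation*}
(1+\alpha_k\lambda)\,\E\|x^{k+1}-x^\star\|^2 \;\le\; \E\|x^k-x^\star\|^2 \;-\; 2\alpha_k\,\E\bigl[\psi(x^{k+1}) - \psi(x^\star)\bigr] \;+\; \alpha_k^2\,\mathsf{G}^2.
\end{equation*}
To derive it, I would apply the three-point inequality to the $(\lambda+1/\alpha_k)$-strongly convex subproblem defining $x^{k+1}$ evaluated at $y=x^\star$, which yields
$(1+\alpha_k\lambda)\|x^{k+1}-x^\star\|^2 + \|x^{k+1}-x^k\|^2 \le \|x^k-x^\star\|^2 + 2\alpha_k[\psi_{x^k}(x^\star;S_k) - \psi_{x^k}(x^{k+1};S_k)]$. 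Then I would bound $\psi_{x^k}(x^\star;S_k)$ from above by $\psi(x^\star;S_k)$ using $f_x(y;s)\le f(y;s)$ and take expectations; and bound $\psi_{x^k}(x^{k+1};S_k)$ from below via \eqref{eq:davis-b4-translated} and $f_{x^k}(x^k;S_k)=f(x^k;S_k)$, picking up an error term linear in $G_{S_k}\|x^{k+1}-x^k\|$, which Young's inequality absorbs against the $-\|x^{k+1}-x^k\|^2$ slack at the cost of the quadratic remainder $\alpha_k^2\mathsf{G}^2$ after taking expectations in $S_k$.

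With $\alpha_k=2/(\lambda(k+1))$ we have $1+\alpha_k\lambda=(k+3)/(k+1)$, and multiplying the recursion by $\lambda(k+1)(k+2)/2$ creates a telescoping structure, because the LHS coefficient $\lambda(k+2)(k+3)/2$ equals the RHS coefficient for the next index. Summing $k=0,\dots,K-1$ thus yields a bound of the form $\sum_{k=1}^K 2(k+1)\,\E[\psi(x^k)-\psi(x^\star)] \le \tfrac{\lambda}{2}\|x^0-x^\star\|^2 + O(K\mathsf{G}^2/\lambda)$. Applying Jensen's inequality to $\psi$ with weights $w_k=k+1$ and normalizer $W_K=\sum_{k=1}^K(k+1)=\tfrac{(K+1)(K+2)-2}{2}$, together with the elementary bound $W_K\ge (K+1)^2/2$ for $K\ge 1$, then produces the stated rate.

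The main obstacle is the one-step recursion itself: the model quantity $\psi_{x^k}(x^{k+1};S_k)$ has to be related back to $\psi(x^{k+1})$, but the model is only anchored at $x^k$. The trick is to route the estimate through $f_{x^k}(x^k;S_k)=f(x^k;S_k)$ and use the Lipschitz constant $G_{S_k}$ twice -- once for $f_{x^k}$ via~\eqref{eq:davis-b4-translated}, once to move from $f(x^k;S_k)$ to $f(x^{k+1};S_k)$ using global boundedness of subgradients -- producing a remainder controllable by Young's inequality. Once the clean recursion is in place, the choice $\alpha_k=2/(\lambda(k+1))$ is tailored precisely so the weights $(k+1)(k+2)/2$ telescope, and the rest of the argument is routine.
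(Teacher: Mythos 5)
Your proposal is correct and takes essentially the same route as the paper: the paper's entire proof consists of verifying that \cref{prop:one-sided-model-reg} (with $\tau=0$, since $\rho_s=0$ implies $\rho=0$) together with the $\lambda$-strong convexity of $\psi_x(\cdot;s)$ places the method in the setting of Assumption~B of \citet{Davis2019}, and then invoking their Theorem~4.5 with $\mu=\lambda$, $T=K-1$ and $\beta_t=\alpha_k^{-1}$ --- which is exactly your first paragraph. Your additional sketch of that theorem's internals (three-point inequality, two Lipschitz estimates absorbed by Young's inequality, telescoping weights $(k+1)(k+2)/2$, Jensen) is structurally the right argument; the only quibble is that using the bound $G_{S_k}\|x^{k+1}-x^k\|$ twice produces a one-step remainder closer to $4\alpha_k^2\mathsf{G}^2$ than to $\alpha_k^2\mathsf{G}^2$, but this only affects the bookkeeping of constants, and the stated factor $8\mathsf{G}^2/(\lambda(K+1))$ is simply the one delivered by the cited theorem.
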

\begin{proof}
As $\rho_s = 0$ and hence $\rho=0$, we have that \citep[Assum.\ B]{Davis2019} is satisfied with $\tau=0$ (in the notation of \citep{Davis2019}, see \cref{prop:one-sided-model-reg-appendix}). Moreover, by \cref{lem:sps-model-properties}, \ref{lem:sps-model-properties-i} and $\lambda$--strong convexity of $\phi$, we have $\lambda$--strong convexity of $\psi_x(\cdot;s)$. The claim follows from \cref{prop:one-sided-model-reg} and \citep[Thm.\ 4.5]{Davis2019} setting $\mu=\lambda$, $T=K-1$ and $\beta_t = \alpha_k^{-1}$.
\end{proof}
%
\subsubsection{Lipschitz smoothness}
%
Assumption \eqref{eqn:glob-bounded-grads}, i.e.\ having globally bounded subgradients, is strong: it implies Lipschitz continuity of $f$ (cf.\ \citep[Lem.\ 4.1]{Davis2019}) and simple functions such as the squared loss do not satisfy this. Therefore, we provide additional guarantees for the smooth case, without the assumption of globally bounded gradients.

The following result, similar to \citep[Lem.\ 4.2]{Davis2019}, is the basic inequality for the subsequent convergence analysis.
\begin{lemma} \label{lem:smooth-basic-ineq}
Let \cref{asum:phi} hold. Let $x^{k+1}$ be given by \eqref{eqn:prox-sps-update-general} and $\psi_{x^k}$ be given in~\eqref{eqn:model-sps-prox}. 
For every $x\in \R^n$ it holds
\begin{align}
\label{eqn:general-basic-ineq-reg}
    (1+\alpha_k\lambda)\|x^{k+1}-x\|^2 \leq \|x^{k}-x\|^2 - \|x^{k+1}-x^k\|^2
    + 2\alpha_k\big(\psi_{x^k}(x;S_k) - \psi_{x^k}(x^{k+1};S_k)\big).
\end{align}
Moreover, it holds
\begin{align}
    \label{eqn:general-model-value-reg}
    \psi_{x^k}(x^{k+1};S_k) \geq f(x^k;S_k) + \iprod{g_k}{x^{k+1}-x^k} + \phi(x^{k+1}).
\end{align}
\end{lemma}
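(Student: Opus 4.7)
The plan is to prove the two inequalities separately, both by direct arguments from the definition of the update and from the definition of the truncated model.

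For inequality \eqref{eqn:general-basic-ineq-reg}, I would start from the observation that the objective in \eqref{eqn:prox-sps-update-general}, namely $h(y) := \psi_{x^k}(y;S_k) + \frac{1}{2\alpha_k}\|y-x^k\|^2$, is $(\lambda + \alpha_k^{-1})$--strongly convex. This follows from \cref{lem:sps-model-properties}\ref{lem:sps-model-properties-i} (convexity of $f_{x^k}(\cdot;S_k)$), \cref{asum:phi} ($\lambda$-strong convexity of $\phi$), and the $\alpha_k^{-1}$-strong convexity contributed by the quadratic term. Since $x^{k+1}$ is the unique minimizer of $h$, the standard strong convexity inequality $h(x) \geq h(x^{k+1}) + \tfrac{1}{2}(\lambda + \alpha_k^{-1})\|x - x^{k+1}\|^2$ holds for all $x \in \R^n$. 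Expanding and multiplying through by $2\alpha_k$ gives
\begin{align*}
2\alpha_k \psi_{x^k}(x;S_k) + \|x-x^k\|^2 \;\geq\; 2\alpha_k \psi_{x^k}(x^{k+1};S_k) + \|x^{k+1}-x^k\|^2 + (1+\alpha_k\lambda)\|x-x^{k+1}\|^2,
\end{align*}
which is exactly \eqref{eqn:general-basic-ineq-reg} after rearrangement.

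For inequality \eqref{eqn:general-model-value-reg}, the argument is even more direct: by definition of the truncated model in \eqref{eqn:model-sps-prox},
\begin{align*}
f_{x^k}(y;S_k) \;=\; \max\bigl\{f(x^k;S_k) + \iprod{g_k}{y - x^k},\, C(S_k)\bigr\} \;\geq\; f(x^k;S_k) + \iprod{g_k}{y - x^k},
\end{align*}
since the max of two quantities is at least each of them. Specializing to $y = x^{k+1}$ and adding $\phi(x^{k+1})$ to both sides yields \eqref{eqn:general-model-value-reg}.

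Neither step presents a real obstacle; the lemma is essentially bookkeeping. The only thing to be mindful of is verifying that strong convexity of $\psi_{x^k}(\cdot;S_k)$ is available with the correct constant $\lambda$, which is where \cref{lem:sps-model-properties}\ref{lem:sps-model-properties-i} is used to absorb the nonsmooth $\max$ structure of $f_{x^k}$ into a convex (but not strongly convex) function, leaving $\phi$ as the sole source of the $\lambda\|\cdot\|^2$ term. The proximal quadratic then provides the remaining $\alpha_k^{-1}$ of strong convexity.
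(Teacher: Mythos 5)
Your proof is correct and follows essentially the same route as the paper's: both establish \eqref{eqn:general-basic-ineq-reg} by noting that the objective of \eqref{eqn:prox-sps-update-general} is $(\lambda+\alpha_k^{-1})$--strongly convex (via \cref{lem:sps-model-properties}\ref{lem:sps-model-properties-i} and \cref{asum:phi}) and applying the strong-convexity inequality at the minimizer $x^{k+1}$, and both obtain \eqref{eqn:general-model-value-reg} by bounding the max in the truncated model below by its linear branch. No gaps.
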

\begin{proof}
The objective of \eqref{eqn:prox-sps-update-general} is given by
$\Psi_k(y):= \psi_{x^k}(y;S_k) + \frac{1}{2\alpha_k} \|y-x^k\|^2.$
Using \cref{lem:sps-model-properties}, \ref{lem:sps-model-properties-i} and $\lambda$-strong convexity of $\phi$, $\Psi_k(y)$ is $(\lambda+\frac{1}{\alpha_k})$--strongly convex. As $x^{k+1}$ is the minimizer of $\Psi_k(y)$, for all $x\in\R^n$ we have
\begin{align*}
    &\Psi_k(x) \geq \Psi_k(x^{k+1}) + \frac{1+\alpha_k\lambda}{2\alpha_k}\|x^{k+1}-x\|^2  \iff \nonumber \\
    & (1+\alpha_k\lambda)\|x^{k+1}-x\|^2 \leq \|x^{k}-x\|^2 - \|x^{k+1}-x^k\|^2
    + 2\alpha_k\big(\psi_{x^k}(x;S_k) - \psi_{x^k}(x^{k+1};S_k)\big).
\end{align*}
Moreover, by definition of $f_x(y;s)$ in \eqref{eqn:model-sps-prox} we have
\begin{align*}
    \psi_{x^k}(x^{k+1};S_k) = f_{x^k}(x^{k+1};S_k) + \phi(x^{k+1}) \geq f(x^k;S_k) + \iprod{g_k}{x^{k+1}-x^k} + \phi(x^{k+1}).
\end{align*}
\end{proof}
We will work in the setting of differentiable loss functions with bounded gradient noise.
\begin{assumption} \label{asum:gradient-noise}
The mapping $f(\cdot;s)$ is differentiable for all $s\in \mathcal{S}$ and there exists $\beta \geq 0$ such that
\begin{align}
    \label{eqn:asum-noise-reg}
    \E\|\nabla f(x;S) - \nabla f(x)\|^2 \leq  \beta \quad \text{for all } x\in\R^n.
\end{align}
\end{assumption}
The assumption of bounded gradient noise \eqref{eqn:asum-noise-reg} (in the differentiable setting) is indeed a weaker assumption than \eqref{eqn:glob-bounded-grads} since $\E[\nabla f(x;S)]=\nabla f(x)$ and
\[\E\|\nabla f(x;S) - \nabla f(x)\|^2 \leq  \beta \iff \E\|\nabla f(x;S)\|^2 \leq \|\nabla f(x)\|^2 + \beta.\]
\begin{remark}
\cref{asum:gradient-noise} (and the subsequent theorems) could be adapted to the case where $f(\cdot;s)$ is weakly convex but non-differentiable: for fixed $x\in\R^n$, due to \citep[Prop.\ 2.2]{Bertsekas1973} and \citep[Lem.\ 2.1]{Davis2019} it holds
\begin{align*}
    \E[\partial f(x;S)] = \E\Big[\partial \big(f(x;S)+ \frac{\rho_S}{2}\|x\|^2\big) - \rho_S x\Big] = \partial f(x) + \rho x - \E[\rho_S x] = \partial f(x),
\end{align*}
where we used $\rho=\E[\rho_S]$. Hence, for $g_s \in \partial f(x;s)$ we have $\E[g_S] \in \partial f(x)$ and \eqref{eqn:asum-noise-reg} is replaced by 
\begin{align*}
    \E\|g_S - \E[g_S]\|^2 \leq \beta \quad \text{for all } x\in\R^n.
\end{align*}
However, as we will still require that $f$ is Lipschitz-smooth, we present our results for the differentiable setting.
\end{remark}
The proof of the subsequent theorems can be found in \cref{sec:proof-thm-conv} and \cref{sec:proof-thm-nonconv}.
\begin{theorem}
\label{thm:convex-smooth-reg}
Let \cref{asum:phi} and \cref{asum:gradient-noise} hold.
Let $f(\cdot;s)$ be convex for all $s\in\mathcal{S}$ and let $f$ be $L$--smooth~\eqref{eq:Lsmooth}.  Let $x^\star = \argmin_{x\in\R^n} \psi(x)$ and let $\theta>1$. Let $\{x^k\}_{k=0,\dots,K}$ be generated by \cref{alg:proxsps-general} for step sizes $\alpha_k > 0$ such that
\begin{align}
\label{eqn:alpha-cond-reg}
   \alpha_k \leq \frac{1-1/\theta}{L}.
\end{align}
Then, it holds
\begin{align}\label{eqn:main-recursion-reg}
    (1+\alpha_k\lambda)\E\|x^{k+1}-x^\star\|^2 \leq \E\|x^{k}-x^\star\|^2 + 2\alpha_k\E[\psi(x^\star)-\psi(x^{k+1})] + \theta\beta\alpha_k^2.
\end{align}
Moreover, we have:
\begin{enumerate}[label=\alph*)]
    \item \label{item-a:convex-smooth-reg} 
    If $\lambda > 0 $ and $\alpha_k = \tfrac{1}{\lambda(k+k_0)}$ with $k_0 \geq 1$ large enough such that \eqref{eqn:alpha-cond-reg} is fulfilled, then
            \begin{align} \label{eqn:strong-conv-decay-reg}
                \E\Big[\psi\Big(\tfrac{1}{K}\sum_{k=0}^{K-1}x^{k+1} \Big) -\psi(x^\star)\Big] \leq \frac{\lambda k_0}{2K}\|x^0-x^\star\|^2 + \frac{\theta\beta (1+\ln K)}{2\lambda K}.
            \end{align}
    \item \label{item-b:convex-smooth-reg}
    If $\lambda = 0$ and $\alpha_k = \tfrac{\alpha}{\sqrt{k+1}}$ with $\alpha \leq \frac{1-1/\theta}{L}$, then 
        \begin{align}\label{eqn:conv-decay-reg}
            \E\Big[\psi\Big(\tfrac{1}{\sum_{k=0}^{K-1}\alpha_k}\sum_{k=0}^{K-1}\alpha_k x^{k+1} \Big) -\psi(x^\star)\Big] \leq \frac{\|x^0-x^\star\|^2}{4\alpha(\sqrt{K+1}-1)} + \frac{\theta\beta\alpha (1+\ln K)}{4(\sqrt{K+1} -1)}.
        \end{align}
    \item \label{item-c:convex-smooth-reg} 
    If $f$ is $\mu$--strongly convex with $\mu\geq 0$,\footnote{Note that as $f(\cdot;s)$ is convex, so is $f$, and that we allow $\mu=0$ here.} and $\alpha_k = \alpha$ fulfilling \eqref{eqn:alpha-cond-reg}, then
            \begin{align}
                \label{eqn:strong-conv-cst-reg}
                \E\|x^{K}-x^\star\|^2 \leq (1+\alpha(\mu+2\lambda))^{-K} \|x^0-x^\star\|^2 + \frac{\theta\beta\alpha}{\mu+2\lambda}.
            \end{align}
\end{enumerate}
\end{theorem}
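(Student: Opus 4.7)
The plan is to start from the basic inequality of \cref{lem:smooth-basic-ineq} with $x = x^\star$ and then massage the model gap $\psi_{x^k}(x^\star;S_k) - \psi_{x^k}(x^{k+1};S_k)$ so that, after taking expectation, it can be replaced by $\psi(x^\star) - \psi(x^{k+1})$ up to a term of size $\theta\beta\alpha_k^2$. Since $f(\cdot;s)$ is convex (i.e., weakly convex with $\rho_s=0$), \cref{lem:sps-model-properties}\ref{lem:sps-model-properties-ii} yields $\psi_{x^k}(x^\star;S_k) \leq f(x^\star;S_k) + \phi(x^\star)$, while \eqref{eqn:general-model-value-reg} provides the matching lower bound on $\psi_{x^k}(x^{k+1};S_k)$. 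The resulting difference carries a cross term $-\iprod{\nabla f(x^k;S_k)}{x^{k+1}-x^k}$, which I would split as $-\iprod{\nabla f(x^k)}{x^{k+1}-x^k} + \iprod{\nabla f(x^k)-\nabla f(x^k;S_k)}{x^{k+1}-x^k}$. The deterministic part is handled by $L$-smoothness of $f$, producing $f(x^k) - f(x^{k+1}) + \tfrac{L}{2}\|x^{k+1}-x^k\|^2$; the noise part is handled by Young's inequality with parameter $t = \theta\alpha_k$, giving $\tfrac{\theta\alpha_k}{2}\|\nabla f(x^k)-\nabla f(x^k;S_k)\|^2 + \tfrac{1}{2\theta\alpha_k}\|x^{k+1}-x^k\|^2$. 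Substituting back, the coefficient of $\|x^{k+1}-x^k\|^2$ becomes $\alpha_k L + 1/\theta - 1$, nonpositive exactly under \eqref{eqn:alpha-cond-reg}, so this term drops out. Taking expectation, the $f(x^\star;S_k) - f(x^k;S_k)$ telescopes with $f(x^k)-f(x^{k+1})$ to give $\psi(x^\star) - \E\psi(x^{k+1})$, and the noise variance is bounded by $\beta$ via \cref{asum:gradient-noise}, producing the master recursion \eqref{eqn:main-recursion-reg}.

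From \eqref{eqn:main-recursion-reg} the three rates follow by standard specialization. For \ref{item-a:convex-smooth-reg}, the choice $\alpha_k = 1/(\lambda(k+k_0))$ makes $(1+\alpha_k\lambda)(k+k_0) = k+k_0+1$, so multiplying by $(k+k_0)$ and summing over $k=0,\dots,K-1$ telescopes the distance terms while the noise contribution becomes $(\theta\beta/\lambda^2)\sum_{k=0}^{K-1}1/(k+k_0)\leq (\theta\beta/\lambda^2)(1+\ln K)$; Jensen on the uniform average over $x^1,\dots,x^K$ then yields \eqref{eqn:strong-conv-decay-reg}. For \ref{item-b:convex-smooth-reg}, with $\lambda=0$ and $\alpha_k=\alpha/\sqrt{k+1}$, telescope \eqref{eqn:main-recursion-reg} directly, use $\sum_{k=0}^{K-1}\alpha_k^2 \leq \alpha^2(1+\ln K)$ and the integral bound $\sum_{k=0}^{K-1}\alpha_k \geq 2\alpha(\sqrt{K+1}-1)$, and apply a weighted Jensen bound with weights $\alpha_k$ to obtain \eqref{eqn:conv-decay-reg}. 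For \ref{item-c:convex-smooth-reg}, $\psi$ is $(\mu+\lambda)$-strongly convex, so $\psi(x^\star)-\psi(x^{k+1}) \leq -\tfrac{\mu+\lambda}{2}\|x^{k+1}-x^\star\|^2$; plugging this into \eqref{eqn:main-recursion-reg} produces the linear recursion $(1+\alpha(\mu+2\lambda))r_{k+1} \leq r_k + \theta\beta\alpha^2$ with $r_k := \E\|x^k-x^\star\|^2$, which I iterate and sum as a geometric series in $q = (1+\alpha(\mu+2\lambda))^{-1}$ to reach \eqref{eqn:strong-conv-cst-reg}.

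The only genuinely delicate step I foresee is the calibration of Young's inequality: the choice $t = \theta\alpha_k$ is the unique scaling that simultaneously gives the advertised noise contribution $\theta\beta\alpha_k^2$ and leaves a $\|x^{k+1}-x^k\|^2$ coefficient that matches the step-size condition \eqref{eqn:alpha-cond-reg} exactly. Once that alignment is spotted, the rest of the argument is essentially bookkeeping under the tower property together with three routine applications of (weighted) Jensen and geometric summation.
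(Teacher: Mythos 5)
Your proposal is correct and follows essentially the same route as the paper: the same use of \cref{lem:smooth-basic-ineq} at $x=x^\star$, the same split of the cross term into a deterministic part handled by $L$-smoothness and a noise part handled by Young's inequality with parameter $\theta\alpha_k$ (so that the $\|x^{k+1}-x^k\|^2$ coefficient $\alpha_k L + 1/\theta - 1$ is killed exactly by \eqref{eqn:alpha-cond-reg}), and the same three specializations via telescoping, \cref{lem:bound-series}, Jensen, and the geometric series. The only cosmetic difference is that in part \ref{item-a:convex-smooth-reg} you multiply the recursion by $(k+k_0)$ rather than by $\tfrac{1}{2\alpha_k}$, which is the same normalization up to a constant and yields the identical bound.
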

\begin{remark}
If $\lambda>0$, for the decaying step sizes in item \ref{item-a:convex-smooth-reg} we get a rate of $\mathcal{\tilde{O}}(\tfrac{1}{K})$ if $\lambda > 0$. In the strongly convex case in item \ref{item-c:convex-smooth-reg}, for constant step sizes, we get a linear convergence upto a neighborhood of the solution. Note that the constant on the right-hand side of \eqref{eqn:strong-conv-cst-reg} can be forced to be small using a small $\alpha$. Further, the rate~\eqref{eqn:strong-conv-cst-reg} has a
 $2\lambda$ term, instead of $\lambda$. This slight improvement in the rate occurs because we do not linearize $\phi$ in the \texttt{ProxSPS} model.
\end{remark}
%
\begin{theorem}
\label{thm:exact-nonconv-reg}
Let \cref{asum:phi} and \cref{asum:gradient-noise} hold.
Let $f(\cdot;s)$ be $\rho_s$--weakly convex for all $s\in\mathcal{S}$ and let $\rho :=\E[\rho_S] < \infty$. Let $f$ be $L$--smooth\footnote{As $f$ is $\rho$--weakly convex, this implies $\rho\leq L$.} and assume that $\inf \psi > - \infty$. Let $\{x^k\}_{k\geq 0}$ be generated by \cref{alg:proxsps-general}.
For $\theta > 1$, under the condition
\begin{align}
    \label{eqn:cond-eta-alpha-reg}
    \eta \in \begin{cases} (0, \tfrac{1}{\rho-\lambda}) &\text{if } \rho > \lambda\\ (0,\infty)&\text{else}\end{cases}, \hspace{15ex} \alpha_k \leq \frac{1-\theta^{-1}}{L+\eta^{-1}},
\end{align}
it holds
\begin{align}\label{eqn:moreau-env-recurse-reg}
    \sum_{k=0}^{K-1}\alpha_k \E \|\nabla \mathrm{env}^\eta_\psi(x^k)\|^2 \leq \frac{2(\mathrm{env}^\eta_\psi(x^0) -\inf \psi)}{1-\eta(\rho-\lambda)}+ \frac{\beta\theta}{\eta(1-\eta(\rho-\lambda))}\sum_{k=0}^{K-1}\alpha_k^2.
\end{align}
Moreover, for the choice $\alpha_k = \frac{\alpha}{\sqrt{k+1}}$ and with $\alpha \leq \frac{1-\theta^{-1}}{L+\eta^{-1}}$, we have
\begin{align*}
    \min_{k=0,\dots,K-1}\E \|\nabla \mathrm{env}^\eta_\psi(x^k)\|^2 \leq \frac{\mathrm{env}^\eta_\psi(x^0) -\inf \psi}{\alpha(1-\eta(\rho-\lambda))(\sqrt{K+1}-1)} + \frac{\beta\theta}{2\eta(1-\eta(\rho-\lambda))}\frac{\alpha(1+\ln K)}{(\sqrt{K+1}-1)}.
\end{align*}
If instead we choose $\alpha_k = \frac{\alpha}{\sqrt{K}}$ and with $\alpha \leq \sqrt{K}\frac{1-\theta^{-1}}{L+\eta^{-1}}$, we have
\begin{align*}
    \E \|\nabla \mathrm{env}^\eta_\psi(x^K_\sim)\|^2 \leq \frac{2(\mathrm{env}^\eta_\psi(x^0) -\inf \psi)}{\alpha(1-\eta(\rho-\lambda))\sqrt{K}} + \frac{\beta\theta}{\eta(1-\eta(\rho-\lambda))}\frac{\alpha}{\sqrt{K}},
\end{align*}
where $x^K_\sim$ is uniformly drawn from $\{x^0,\dots, x^{K-1}\}$.
\end{theorem}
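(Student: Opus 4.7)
The plan is to use the Moreau envelope $\env{\psi}^\eta$ as a Lyapunov function. Let $\hat x^k := \prox{\eta \psi}(x^k)$, so that $\env{\psi}^\eta(x^k) = \psi(\hat x^k) + \tfrac{1}{2\eta}\|x^k - \hat x^k\|^2$ and $\|\nabla \env{\psi}^\eta(x^k)\|^2 = \eta^{-2}\|x^k - \hat x^k\|^2$ by \eqref{eqn:env-gradient-norm}. My target is the per-step descent
\begin{align*}
\E\bigl[\env{\psi}^\eta(x^{k+1}) \,\big|\, x^k\bigr] \;\leq\; \env{\psi}^\eta(x^k) \;-\; \tfrac{\alpha_k(1-\eta(\rho-\lambda))}{2\eta^2}\,\|x^k - \hat x^k\|^2 \;+\; \tfrac{\alpha_k^2 \beta \theta}{2\eta}.
\end{align*}
Summing this for $k=0,\dots,K-1$, taking total expectation, and using $\env{\psi}^\eta(x^K) \geq \inf \psi$ immediately yields \eqref{eqn:moreau-env-recurse-reg}. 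The three follow-up estimates are direct: for $\alpha_k = \alpha/\sqrt{k+1}$, divide by $\sum_k \alpha_k$ using $\min_k \E\|\nabla \env{\psi}^\eta(x^k)\|^2 \leq (\sum_k \alpha_k)^{-1} \sum_k \alpha_k \E\|\nabla \env{\psi}^\eta(x^k)\|^2$ together with $\sum_{k=0}^{K-1} 1/\sqrt{k+1} \geq 2(\sqrt{K+1}-1)$ and $\sum_{k=0}^{K-1} 1/(k+1) \leq 1 + \ln K$; for the constant $\alpha_k = \alpha/\sqrt{K}$, observe that $\sum_k \alpha_k \E\|\cdot\|^2 = \alpha \sqrt{K}\, \E\|\nabla \env{\psi}^\eta(x^K_\sim)\|^2$ since the weights are uniform in $k$.

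To derive the descent I start from Lemma~\ref{lem:smooth-basic-ineq} with $x = \hat x^k$ and combine with the envelope bound $\env{\psi}^\eta(x^{k+1}) \leq \psi(\hat x^k) + \tfrac{1}{2\eta}\|\hat x^k - x^{k+1}\|^2$ to get
\begin{align*}
\env{\psi}^\eta(x^{k+1}) - \env{\psi}^\eta(x^k) \;\leq\; -\tfrac{1}{2\eta}\|x^{k+1}-x^k\|^2 + \tfrac{\alpha_k}{\eta}\, A_k - \tfrac{\alpha_k \lambda}{2\eta}\|x^{k+1}-\hat x^k\|^2,
\end{align*}
with $A_k := \psi_{x^k}(\hat x^k;S_k) - \psi_{x^k}(x^{k+1};S_k)$. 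I bound $A_k$ by using Lemma~\ref{lem:sps-model-properties}(ii) for the upper term, giving $\psi_{x^k}(\hat x^k;S_k) \leq f(\hat x^k;S_k) + \tfrac{\rho_{S_k}}{2}\|\hat x^k - x^k\|^2 + \phi(\hat x^k)$, and \eqref{eqn:general-model-value-reg} for the lower term, combined with the $L$-smoothness inequality $\iprod{\nabla f(x^k)}{x^{k+1}-x^k} \geq f(x^{k+1}) - f(x^k) - \tfrac{L}{2}\|x^{k+1}-x^k\|^2$. Writing $\xi_k := g_k - \nabla f(x^k)$ and taking conditional expectation, the mean-zero terms $f(\cdot;S_k) - f(\cdot)$ vanish, $\E[\rho_{S_k}]=\rho$, and $\E[\|\xi_k\|^2 \mid x^k] \leq \beta$ by Assumption~\ref{asum:gradient-noise}. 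To convert the surviving $\psi(\hat x^k) - \E[\psi(x^{k+1}) \mid x^k]$ into distances I would exploit $(\rho-\lambda)$-weak convexity of $\psi$ at $\hat x^k$ with the optimality subgradient $v = \eta^{-1}(x^k - \hat x^k) \in \partial\psi(\hat x^k)$, producing
\begin{align*}
\psi(\hat x^k) - \psi(x^{k+1}) \;\leq\; -\tfrac{1}{2\eta}\|x^k - \hat x^k\|^2 - \tfrac{1-\eta(\rho-\lambda)}{2\eta}\|x^{k+1}-\hat x^k\|^2 + \tfrac{1}{2\eta}\|x^{k+1}-x^k\|^2,
\end{align*}
which is where the factor $1-\eta(\rho-\lambda)$ enters.

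The main technical obstacle is calibrating Young's inequality on $-\iprod{\xi_k}{x^{k+1}-x^k}$ so that (i) the combined coefficient of $\|x^{k+1}-x^k\|^2$ becomes nonpositive precisely under \eqref{eqn:cond-eta-alpha-reg}, while (ii) the residual noise scales as $\alpha_k^2 \beta$ rather than $\alpha_k \beta$. Applying Young's with parameter $\alpha_k \theta$ produces a noise contribution $\tfrac{\alpha_k^2 \theta \beta}{2\eta}$ and a $\tfrac{1}{2\eta\theta}$ coefficient on $\|x^{k+1}-x^k\|^2$; combining these with the $\tfrac{\alpha_k L}{2\eta}$ from the smoothness bound and the $\tfrac{\alpha_k}{2\eta^2}$ from the weak-convexity display above, the total coefficient equals $\tfrac{1}{2\eta}\bigl[\alpha_k(L+\eta^{-1}) + \theta^{-1} - 1\bigr]$, which is nonpositive under \eqref{eqn:cond-eta-alpha-reg}. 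After dropping the surviving nonpositive $\|x^{k+1}-\hat x^k\|^2$ and $\|x^{k+1}-x^k\|^2$ terms and using a standard Young-type conversion $\|x^{k+1}-\hat x^k\|^2 \geq (1-\delta)\|x^k-\hat x^k\|^2 - (\delta^{-1}-1)\|x^{k+1}-x^k\|^2$ to trade part of the $-\tfrac{\alpha_k\lambda}{2\eta}\|x^{k+1}-\hat x^k\|^2$ into a negative multiple of $\|x^k-\hat x^k\|^2$, what remains is exactly the coefficient $-\tfrac{\alpha_k(1-\eta(\rho-\lambda))}{2\eta^2}$ on $\|x^k-\hat x^k\|^2$ required for the descent lemma.
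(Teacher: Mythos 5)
Your overall architecture is the paper's: Moreau envelope as Lyapunov function, \cref{lem:smooth-basic-ineq} evaluated at $x=\hat x^k=\prox{\eta\psi}(x^k)$, the two model bounds from \cref{lem:sps-model-properties}\ref{lem:sps-model-properties-ii} and \eqref{eqn:general-model-value-reg}, smoothness plus Young's inequality with parameter $\theta\alpha_k$, and the same summation/averaging for the three displayed consequences. Your coefficient bookkeeping is correct up to and including the claim that the total coefficient of $\|x^{k+1}-x^k\|^2$ is $\tfrac{1}{2\eta}\bigl[\alpha_k(L+\eta^{-1})+\theta^{-1}-1\bigr]\leq 0$, and your strengthened proximal inequality (the extra $-\tfrac{1-\eta(\rho-\lambda)}{2\eta}\|x^{k+1}-\hat x^k\|^2$ term obtained from weak convexity and the subgradient $\eta^{-1}(x^k-\hat x^k)\in\partial\psi(\hat x^k)$) is valid, though the paper gets by with the plain minimizer inequality for $\prox{\eta\psi}$.

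The gap is in the last step. After your bookkeeping, the factor $1-\eta(\rho-\lambda)$ sits on $\|x^{k+1}-\hat x^k\|^2$, whereas the descent lemma needs it on $\|x^k-\hat x^k\|^2$; the coefficient you obtain directly on $\|x^k-\hat x^k\|^2$ is $-\tfrac{\alpha_k(1-\eta\rho)}{2\eta^2}$, which is not even negative on the range $\eta\in[1/\rho,\,1/(\rho-\lambda))$ that the theorem allows when $\lambda>0$. Your proposed conversion $\|x^{k+1}-\hat x^k\|^2\geq(1-\delta)\|x^k-\hat x^k\|^2-(\delta^{-1}-1)\|x^{k+1}-x^k\|^2$ necessarily creates a strictly \emph{positive} multiple of $\|x^{k+1}-x^k\|^2$ (of order $\alpha_k\lambda$), and under \eqref{eqn:cond-eta-alpha-reg} there is in general nothing left to absorb it: at $\alpha_k=\tfrac{1-\theta^{-1}}{L+\eta^{-1}}$ the available coefficient $\tfrac{1}{2\eta}\bigl[\alpha_k(L+\eta^{-1})+\theta^{-1}-1\bigr]$ is exactly zero (you also say you drop that term \emph{before} invoking the conversion, which is self-contradictory). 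As written, your argument only yields the theorem with $1-\eta\rho$ in place of $1-\eta(\rho-\lambda)$, i.e.\ under the stronger restriction $\eta<1/\rho$ and with worse constants. The paper avoids this by \emph{not} moving $\tfrac{\alpha_k\lambda}{2\eta}\|x^{k+1}-\hat x^k\|^2$ to the right-hand side: it keeps the factor $(1+\alpha_k\lambda)$ on the left of \eqref{eqn:general-basic-ineq-reg}, drops the $\|x^{k+1}-x^k\|^2$ term, and only then divides by $1+\alpha_k\lambda$, so that the ratio $\tfrac{1+\alpha_k(\rho-\eta^{-1})}{1+\alpha_k\lambda}=1+\tfrac{\alpha_k(\rho-\eta^{-1}-\lambda)}{1+\alpha_k\lambda}$ produces a strictly negative $\|x^k-\hat x^k\|^2$ increment on the whole range $\eta<1/(\rho-\lambda)$ at no cost in $\|x^{k+1}-x^k\|^2$. (A side remark: the descent coefficient obtained this way is $\tfrac{\alpha_k(1-\eta(\rho-\lambda))}{2\eta^2(1+\alpha_k\lambda)}$; the paper's further simplification $1+\tfrac{\alpha_k(\rho-\eta^{-1}-\lambda)}{1+\alpha_k\lambda}\leq 1+\alpha_k(\rho-\eta^{-1}-\lambda)$ points the wrong way when $\lambda>0$, so the constants in \eqref{eqn:moreau-env-recurse-reg} should strictly carry an extra bounded factor $1+\alpha_k\lambda$ — but that is orthogonal to the gap in your argument.)
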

\subsubsection{Comparison to existing theory}

Recalling that \cref{alg:sps-unreg} is equivalent to \spsmax{} with $c=1$ and $\gamma_b=\alpha_k$, we can apply \cref{thm:convex-smooth-reg} and \cref{thm:exact-nonconv-reg} for the unregularized case where $\phi=0$ and hence obtain new theory for \eqref{eqn:sps-max}. 
We start by summarizing the main theoretical results for \spsmax{} given in \citep{Loizou2021,Orvieto2022}: in the \eqref{eqn:ER} setting, recall the interpolation constant $\sigma^2 = \E[f(x^\star;S)-C(S)] = \frac{1}{N}\sum_{i=1}^N f_i(x^\star) - C(s_i)$. If $f_i$ is $L_i$-smooth and convex, \citep[Thm.\ 3.1]{Orvieto2022} proves convergence to a neighborhood of the solution, i.e. the iterates $\{x^k\}$ of \spsmax{} satisfy
\begin{align}\label{eq:spsconvex}
    \E[f(\bar{x}^K) - f(x^\star)] \leq \frac{\|x^0-x^\star\|^2}{\alpha K} + \frac{2\gamma_b\sigma^2}{\alpha},
\end{align}
where $\bar{x}^K := \frac{1}{K}\sum_{k=0}^{K-1} x^k$, $\alpha:= \min\{\frac{1}{2cL_{\max}}, \gamma_b\}$, and $L_{\max} := \max_{i\in [N]} L_i$.\footnote{The theorem also handles the mini-batch case but, for simplicity, we state the result for sampling a single $i_k$ in each iteration.}
For the nonconvex case, if $f_i$ is $L_i$-smooth and under suitable assumptions on the gradient noise, \citep[Thm.\ 3.8]{Loizou2021} states that, for constants $c_1$ and $c_2$, we have 
\begin{align} \label{eq:spsnonconvex}
    \min_{k=1,\dots,K} \E\|\nabla f(x^k)\|^2  \leq \frac{1}{c_1 K} + c_2.
\end{align}
The main advantage of these results is that $\gamma_b$ can be held constant; furthermore in the convex setting~\eqref{eq:spsconvex}, the choice of $\gamma_b$ requires no knowledge of the smoothness constants $L_i$.
For both results however, we can not directly conclude that the right-hand side goes to zero as $K\to \infty$ as there is an additional constant. Choosing $\gamma_b$ sufficiently small does not immediately solve this as $c_1$, $\alpha$ and $c_2$ all go to zero as $\gamma_b$ goes to zero. \\
Our results complement this by showing exact convergence for the (weakly) convex case, i.e.\ without constants on the right-hand side. This comes at the cost of an upper bound on the step sizes $\alpha_k$ which depends on the smoothness constant $L$. For exact convergence, it is important to use decreasing step sizes $\alpha_k$: \cref{thm:exact-nonconv-reg} shows that the gradient of the Moreau envelope converges to zero at the rate $\mathcal{O}(1/\sqrt{K})$ for the choice of $\alpha_k=\frac{\alpha}{\sqrt{K}}$.\footnote{Notice that $\alpha_k$ then depends on the total number of iterations $K$ and hence one would need to fix $K$ before starting the method.} 
Another minor difference to \citep{Loizou2021} is that we do not need to assume Lipschitz-smoothness for all $f(\cdot;s)$ and work instead with the (more general) assumption of weak convexity. However, we still need to assume Lipschitz smoothness of $f$. 

Another variant of \spsmax{}, named \texttt{DecSPS}, has been proposed in \citep{Orvieto2022}: for unregularized problems \eqref{prob:unreg} it is given by
\begin{align}\label{alg:decsps}
\tag{\texttt{DecSPS}}
    x^{k+1} = x^k - \hat\gamma_k g_k, \quad \hat \gamma_k = \frac{1}{c_k}\min\Big\{\frac{f(x^k;S_k)-C_k}{\|g_k\|^2}, c_{k-1}\hat \gamma_{k-1}\Big\}
\end{align}
where $\{c_k\}_{k\geq 0}$ is an increasing sequence. In the \eqref{eqn:ER} setting, if all $f_i$ are Lipschitz-smooth and strongly convex, \texttt{DecSPS} converges with a rate of $\mathcal{O}(\frac{1}{\sqrt{K}})$, without knowledge of the smoothness or convexity constants (cf.\ \citep[Thm.\ 5.5]{Orvieto2022}). However, under these assumptions, the objective $f$ is strongly convex and the optimal rate is $\mathcal{O}(\frac{1}{K})$, which we achieve up to a logarithmic factor in \cref{thm:convex-smooth-reg}, \eqref{eqn:strong-conv-decay-reg}. Moreover, for \texttt{DecSPS} no guarantees are given for nonconvex problems.

For regularized problems, the constant in \eqref{eq:spsconvex} is problematic if $\sigma^2$ (computed for the regularized loss) is moderately large. We refer to \cref{sec:interpolation-constant} where we show that this can easily happen. For \texttt{ProxSPS}, our theoretical results \cref{thm:convex-smooth-reg} and \cref{thm:exact-nonconv-reg} are not affected by this as they do not depend on the size of $\sigma^2$. To the best of our knowledge, this is the first work to show theory for the stochastic Polyak step size in a setting that explicitly considers regularization. Moreover, our results also cover the case of non-smooth or non-real-valued regularization $\phi$ where the theory in \citep{Loizou2021} can not be applied.

%
%
%
\section{Numerical experiments}
Throughout we denote \cref{alg:sps-unreg} with \texttt{SPS} and \cref{alg:proxsps-l2} with \texttt{ProxSPS}. 
For all experiments we use \texttt{PyTorch} \citep{Paszke2019}\footnote{The code for our experiments and an implementation of \texttt{ProxSPS} can be found at \url{https://github.com/fabian-sp/ProxSPS}.}.
\subsection{General parameter setting}
\label{sec:numerics-general}
For \texttt{SPS} and \texttt{ProxSPS} we always use $C(s)=0$ for all $s\in \mathcal{S}$. For $\alpha_k$, we use the following schedules:
\begin{itemize}
    \item \texttt{constant}: set $\alpha_k = \alpha_0$ for all $k$ and some $\alpha_0>0$.
    \item \texttt{sqrt}: set $\alpha_k = \tfrac{\alpha_0}{\sqrt{j}}$ for all iterations $k$ during epoch $j$.
\end{itemize}
As we consider problems with $\ell_2$-regularization, for \texttt{SPS} we handle the regularization term by incorporating it into all individual loss functions, as depicted in \eqref{eqn:reg-sps-update}. 
With $\phi=\frac{\lambda}{2} \|\cdot\|^2$ for $\lambda\geq 0$, we denote by $\zeta_k$ the \emph{adaptive step size} term of the following algorithms:
\begin{itemize}\label{it:adaptivestep}
    \item for \texttt{SPS} we have  $\zeta_k := \frac{f(x^k;S_k) + \frac{\lambda}{2}\|x^k\|^2}{\|g_k + \lambda x^k\|^2}$ (cf.\ \eqref{eqn:reg-sps-update} with $\underline{\ell}_{i_k}=0$ ),
    \item for \texttt{ProxSPS} we have $\zeta_k := \left(\frac{(1+\alpha_k\lambda)f(x^k;S_k)  - \alpha_k\lambda \iprod{g_k}{x^k}}{\|g_k\|^2}\right)_+$ and thus $\tau_k^+ = \min\{\alpha_k, \zeta_k\}$ (cf.\ \cref{lem:prox-sps-update} with $C(S_k)=0$).
\end{itemize}
%
\subsection{Regularized matrix factorization}
\textbf{Problem description:} For $A\in\R^{q\times p}$, consider the problem
\begin{align*}
    \min_{W_1 \in \R^{r\times p}, W_2\R^{q\times r}} \E_{y\sim N(0,I)} \|W_2W_1y - Ay\|^2 = \min_{W_1 \in \R^{r\times p}, W_2\R^{q\times r}} \|W_2W_1-A\|_F^2.
\end{align*}
For the above problem, \spsmax{} has shown superior performance than other methods in the numerical experiments of \citep{Loizou2021}. The problem can can be turned into a (nonconvex) empirical risk minimization problem by drawing $N$ samples $\{y^{(1)},\dots,y^{(N)}\}$. Denote $b^{(i)}:=Ay^{(i)}$. Adding squared norm regularization with $\lambda\geq 0$ (cf.\ \citep{Srebro2004}), we obtain the problem
\begin{align}
\label{prob-matrix-fac-reg}
    \min_{W_1 \in \R^{r\times p}, W_2\R^{q\times r}} \oneover{N} \sum_{i=1}^N \|W_2 W_1 y^{(i)} - b^{(i)}\|^2 + \tfrac{\lambda}{2}\big(\|W_1\|_F^2 + \|W_2\|_F^2\big).
\end{align}
This fits the format of \eqref{prob:composite}, where $x=(W_1,W_2)$, using a finite sample space $\mathcal{S}=\{s_1,\dots,s_N\}$, $f(x;s_i) = \|W_2W_1 y^{(i)} - Ay^{(i)}\|^2$, and $\phi = \tfrac{\lambda}{2}\|\cdot\|_F^2$. Clearly, zero is a lower bound of $f(\cdot;s_i)$ for all $i\in[N]$.
We investigate \texttt{ProxSPS} for problems of form \eqref{prob-matrix-fac-reg} on synthetic data. For details on the experimental procedure, we refer to~\cref{sec:appendix-numerics-matrixfac}.
\vspace{2mm}\\
\textbf{Discussion:} We discuss the results for the setting \texttt{matrix-fac1} in \cref{table:matrix-fac} in the Appendix. We first fix $\lambda=0.001$ and consider the three methods \texttt{SPS}, \texttt{ProxSPS} and \texttt{SGD}. \cref{fig:matrix_fac1_stability} shows the objective function over 50 epochs, for both step size schedules \texttt{sqrt} and \texttt{constant}, and several initial values $\alpha_0$. For the \texttt{constant} schedule, we observe that \texttt{ProxSPS} converges quickly for all initial values while \texttt{SPS} is unstable. Note that for \texttt{SGD} we need to pick much smaller values for $\alpha_0$ in order to avoid divergence 
(\texttt{SGD} diverges for large $\alpha_0$).  \texttt{SPS} for large $\alpha_0$ is unstable, while for small $\alpha_0$ we can expect similar performance to \texttt{SGD} (as $\gamma_k$ is capped by $\alpha_k=\alpha_0$). However, in the regime of small $\alpha_0$, convergence will be very slow. Hence, one of the main advantages of \texttt{SPS}, namely that its step size can be chosen constant and moderately large (compared to \texttt{SGD}), is not observed here. \texttt{ProxSPS} fixes this by admitting  a larger range of initial step sizes, all of which result in fast convergence,
and therefore is more robust than \texttt{SGD} and \texttt{SPS} with respect to the tuning of $\alpha_0$.
   
For the \texttt{sqrt} schedule, we  observe in \cref{fig:matrix_fac1_stability} that \texttt{SPS} can be stabilized by reducing the values of $\alpha_k$ over the course of the iterations. However, for large $\alpha_0$ we still see instability in the early iterations, whereas \texttt{ProxSPS} does not show this behaviour. We again observe that \texttt{ProxSPS} is less sensitive with respect to the choice of $\alpha_0$ as compared to \texttt{SGD}. 
The empirical results also confirm our theoretical statement, showing exact convergence if $\alpha_k$ is decaying in the order of $1/\sqrt{k}$. 
From \cref{fig:matrix_fac1_stability_val}, we can make similar observations for the validation error, defined as
$\oneover{N_\text{val}} \sum_{i=1}^{N_\text{val}} \|W_2 W_1 y^{(i)} - b^{(i)}_\text{val}\|^2$,
where $b^{(i)}_\text{val}$ are the $N_\text{val} = N$ measurements from the validation set (cf.\ \cref{sec:appendix-numerics-matrixfac} for details).
\begin{figure}[t]
    \centering
    \includegraphics[height=0.25\textheight]{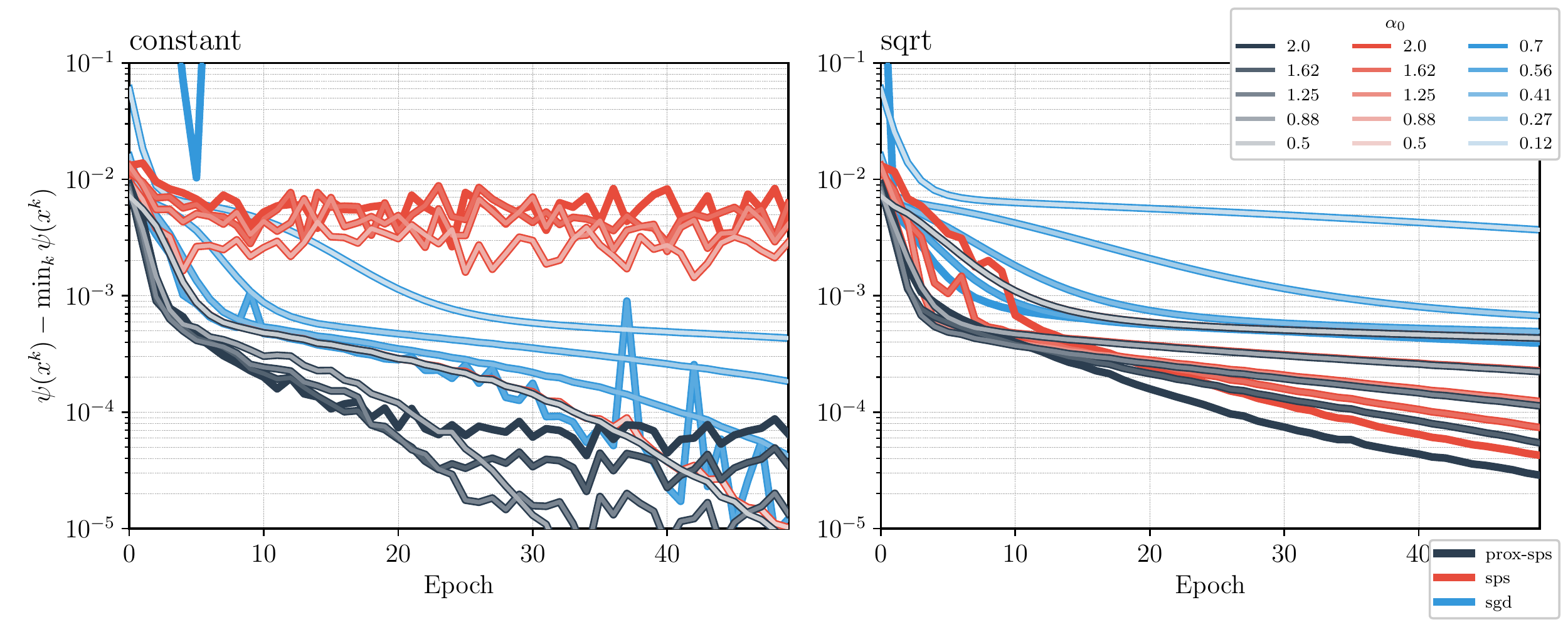}
    \caption{ Objective function for  the Matrix Factorization problem~\eqref{prob-matrix-fac-reg}, with \texttt{constant} (left) and \texttt{sqrt} (right) step size schedule and several choices of initial values. Here $\min_k \psi(x^k)$ is the best objective function value found over all methods and all iterations.}
    \label{fig:matrix_fac1_stability}
\end{figure}
\begin{figure}[t]
    \centering
    \includegraphics[height=0.25\textheight]{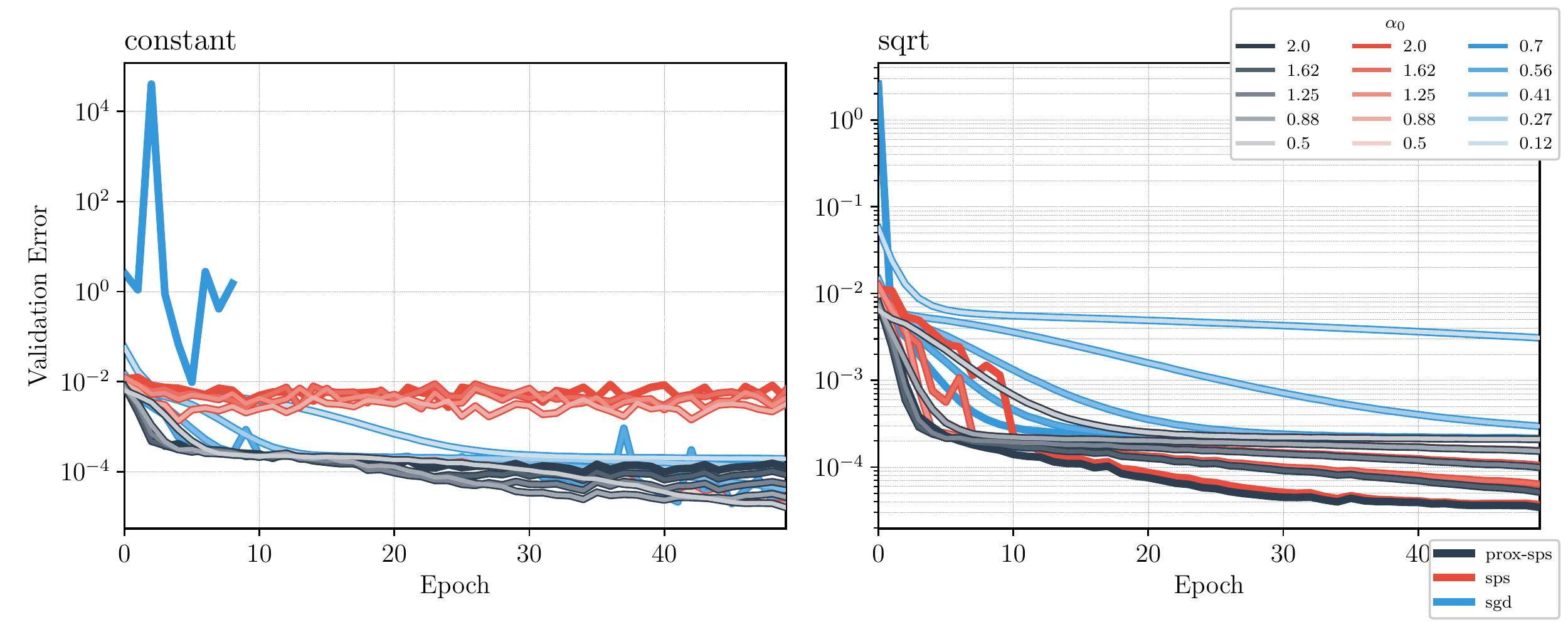}
    \caption{ Validation error for   the Matrix Factorization problem~\eqref{prob-matrix-fac-reg}, with \texttt{constant} (left) and \texttt{sqrt} (right) step size schedule and several choices of initial values. }
    \label{fig:matrix_fac1_stability_val}
\end{figure}

  We now consider different values for $\lambda$ and only consider the \texttt{sqrt} schedule, as we have seen that for constant step sizes, \texttt{SPS} would not work for large step sizes and be almost identical to \texttt{SGD} for small step sizes. \cref{fig:matrix_fac1} shows the objective function and validation error. Again, we can observe that \texttt{SPS} is unstable for large initial values $\alpha_0$ for all $\lambda\geq 10^{-4}$. On the other hand, \texttt{ProxSPS} has a good performance for a wide range of $\alpha_0 \in[1,10]$ if $\lambda$ is not too large.  Indeed,  \texttt{ProxSPS} convergence only starts to deteriorate  when both $\alpha_0$ and $\lambda$ are very large. For $\alpha_0=1$, the two methods give almost identical results.
\begin{figure}[t]
    \centering
    \includegraphics[width=0.99\textwidth]{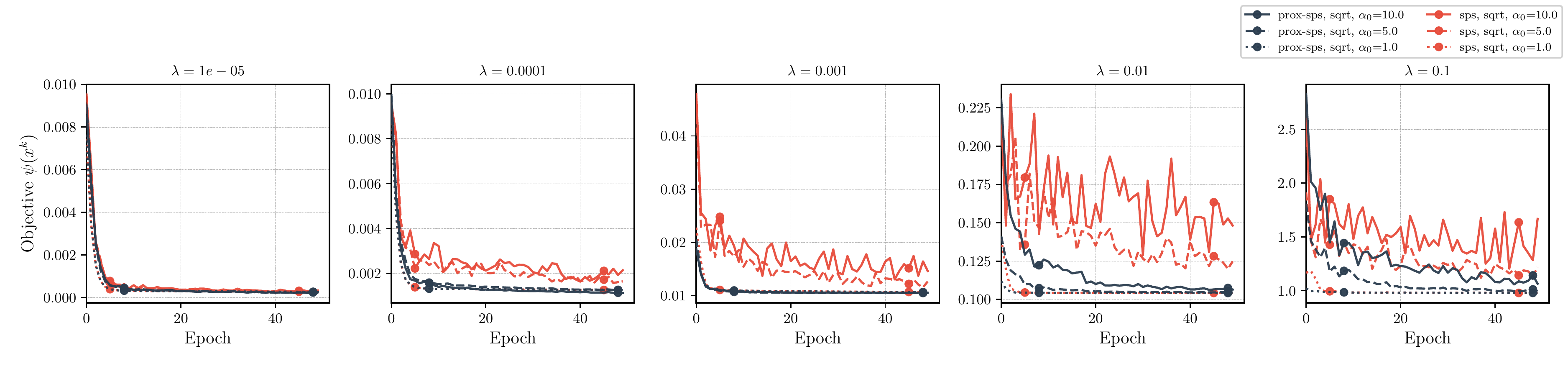}
    \includegraphics[width=0.99\textwidth]{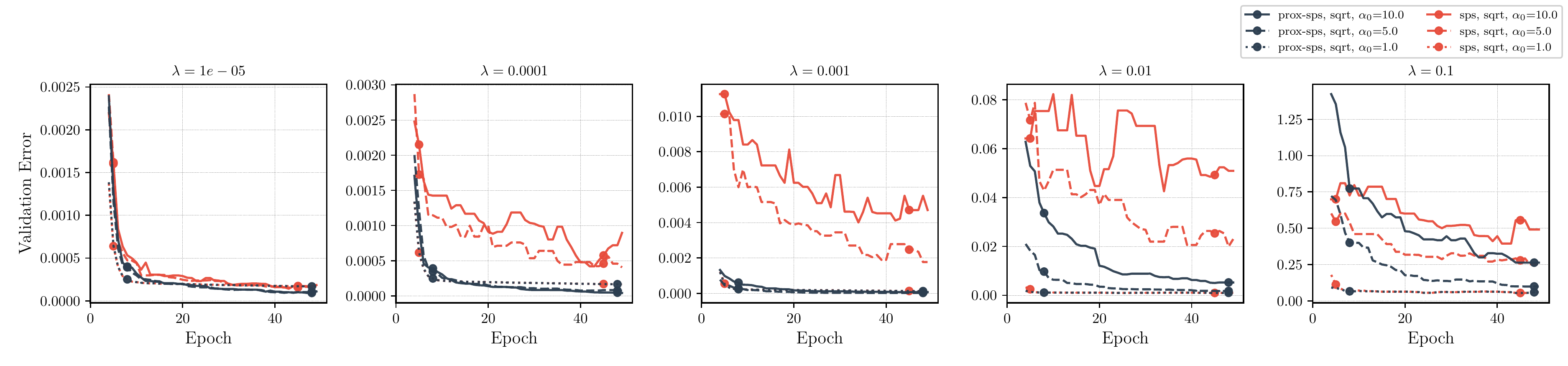}
    \caption{ Objective function value and validation error over the course of optimization. For the validation error, we plot a rolling median over five epochs in order to avoid clutter.}
    \label{fig:matrix_fac1}
\end{figure}
Finally, in \cref{fig:matrix_fac1_lambda_path_val} we plot the validation error as a function of $\lambda$ (taking the median over the last ten epochs). The plot shows that the best validation error is obtained for $\lambda=10^{-4}$ and for large $\alpha_0$. 
With \texttt{SPS} the validation error is higher, in particular for large $\alpha_0$ and $\lambda$. \cref{fig:matrix_fac1_lambda_path_norm} shows that \texttt{ProxSPS} leads to smaller norm of the iterates, hence a more effective regularization.
\begin{figure}[t]
    \centering
    \begin{subfigure}[b]{0.48\textwidth}
        \centering
        \includegraphics[height=0.2\textheight]{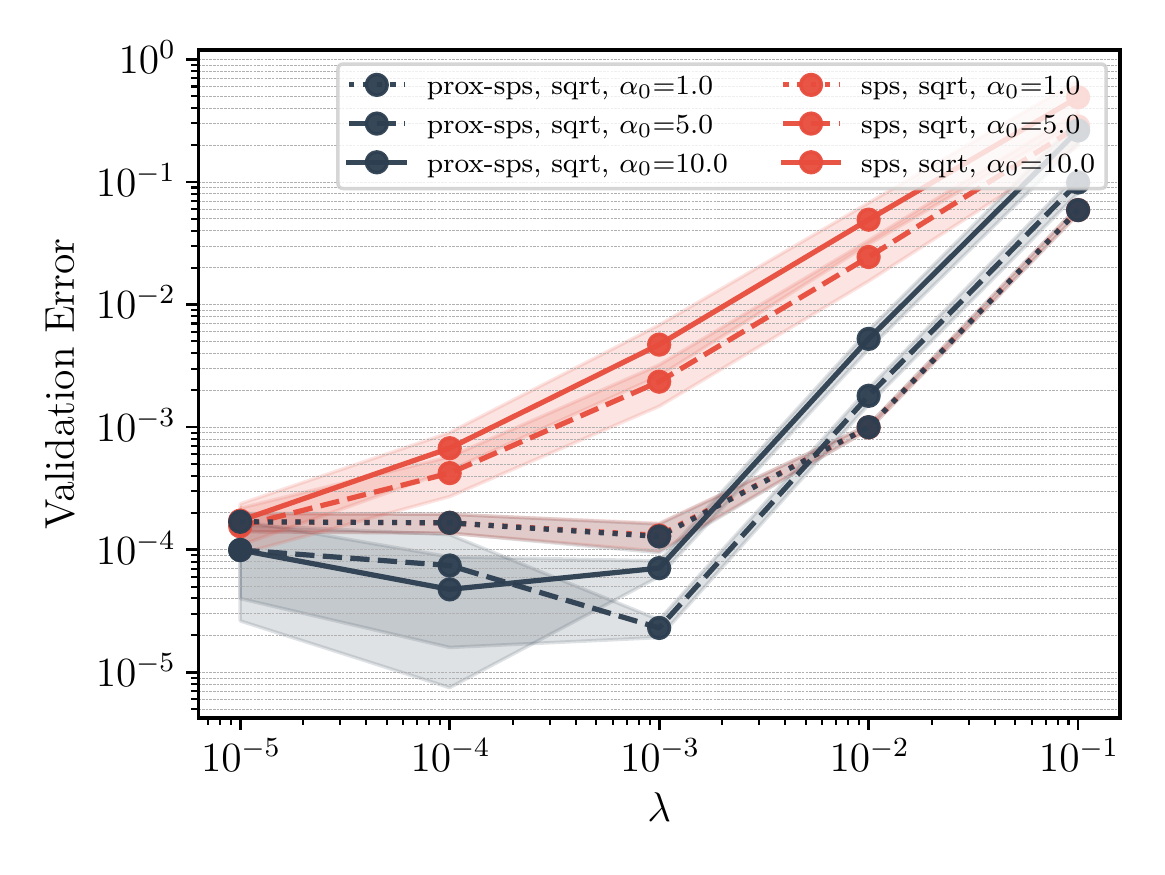}
        \caption[]%
        {{\small Validation error}}    
        \label{fig:matrix_fac1_lambda_path_val}
    \end{subfigure}
    \begin{subfigure}[b]{0.48\textwidth}  
        \centering 
        \includegraphics[height=0.2\textheight]{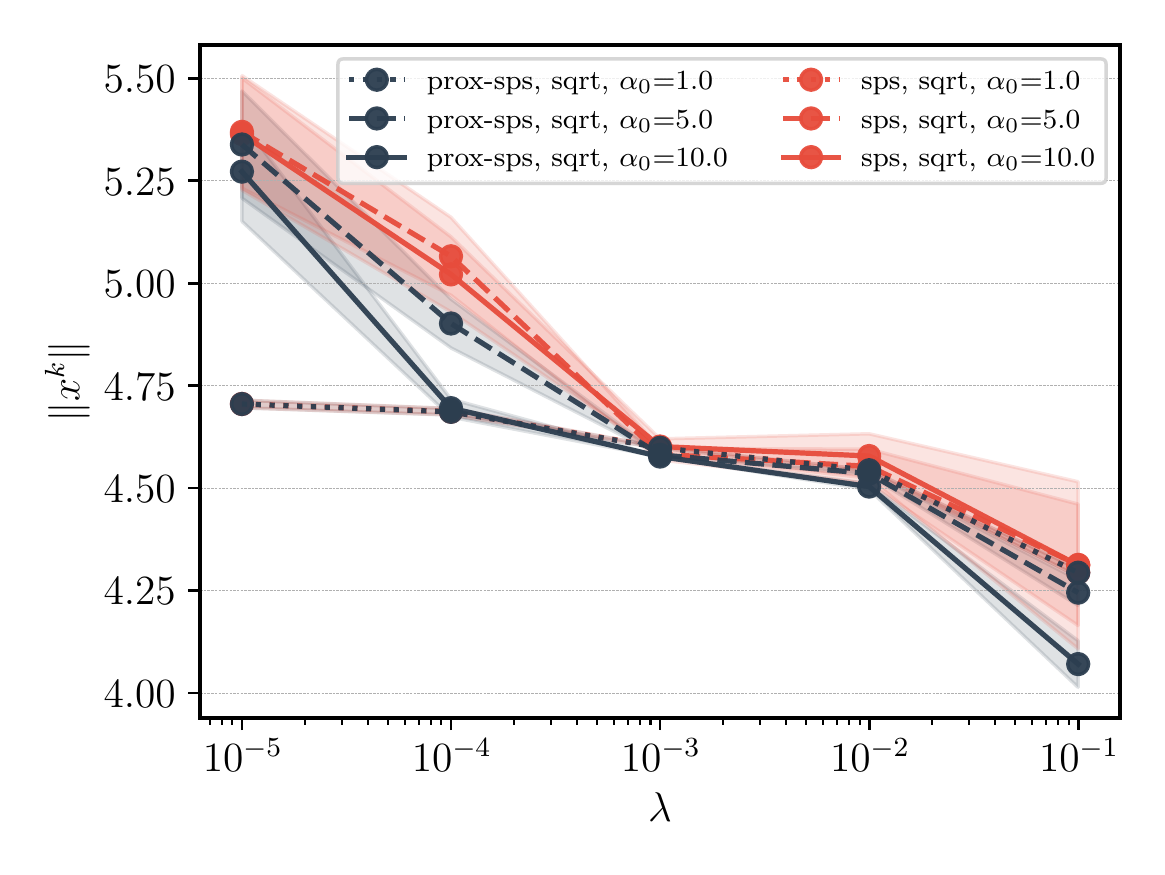}
        \caption[]%
        {{\small Model norm $\sqrt{\|W_1\|^2 + \|W_2\|^2}$}}    
        \label{fig:matrix_fac1_lambda_path_norm}
    \end{subfigure}
\caption[]
{Validation error and model norm as a function of the regularization parameter $\lambda$. Shaded area is one standard deviation (computed over ten independent runs). For all values, we take the median over epochs $[40,50]$.}
\label{fig:matrix_fac1_lambda_path}
\end{figure}
Finally, we plot the actual step sizes for both methods in \cref{fig:matrix_fac1_step_sizes}. We observe that the adaptive step size $\zeta_k$ (Definition at end of \cref{it:adaptivestep}) is typically larger and has more variance for \texttt{SPS} than \texttt{ProxSPS}, in particular for large $\lambda$. This increased variance might explain why \texttt{SPS} is unstable when $\alpha_0$ is large: the actual step size is the minimum between $\alpha_k$ and $\zeta_k$ and hence both terms being large could lead to instability. On the other hand, if $\alpha_0=1$, the plot confirms that \texttt{SPS} and \texttt{ProxSPS} are almost identical methods as $\zeta_k > \alpha_k$ for most iterations. 
%

We provide additional numerical results which confirm the above findings in the Appendix: this includes the results for the setting \texttt{matrix-fac2} of \cref{table:matrix-fac} in \cref{sec:matrix-fac2-plots} as well as a matrix completion task on a real-world dataset of air quality sensor networks \citep{RiveraMunoz2022} in \cref{sec:exp-matrix-completion}.

\begin{figure}[t]
    \centering
    \includegraphics[width=0.99\textwidth]{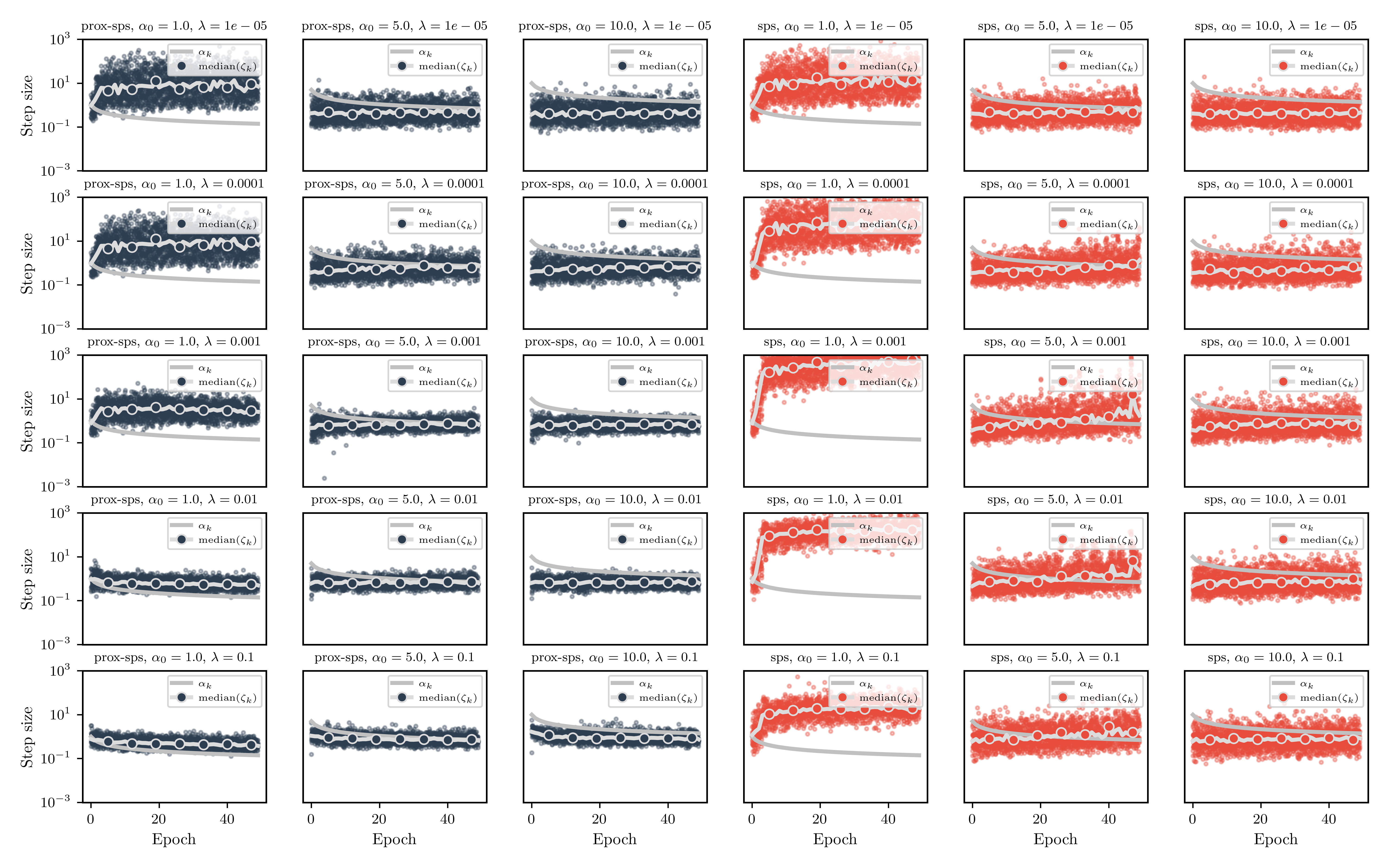}
    \caption{ Adaptive step size selection for \texttt{SPS} and \texttt{ProxSPS}. We plot $\zeta_k$ (see definition in \cref{sec:numerics-general}) as dots for each iteration as well as their median over each epoch. For this plot, we use the results of only one of the ten runs.}
    \label{fig:matrix_fac1_step_sizes}
\end{figure}
%
\subsection{Deep networks for image classification}\label{sec:deep-learning-exp}
%
We train a \texttt{ResNet56} and \texttt{ResNet110} model \citep{He2016} on the \texttt{CIFAR10} dataset. We use the data loading and preprocessing procedure and network implementation from \url{https://github.com/akamaster/pytorch_resnet_cifar10}. 
We do not use batch normalization.
The loss function is the cross-entropy loss of the true image class with respect to the predicted class probabilities, being the output of the \texttt{ResNet56} network. We add $\frac{\lambda}{2}\|x\|^2$ as regularization term, where $x$ consists of all learnable parameters of the model.
The \texttt{CIFAR10} dataset consists of 60,000 images, each of size $32\times32$, from ten different classes. We use the \texttt{PyTorch} split into 50,000 training and 10,000 test examples and use a batch size of 128. 
For \texttt{AdamW}, we set the weight decay parameter to $\lambda$ and set all other hyperparameters to its default.
We use the \texttt{AdamW}-implementation from \url{https://github.com/zhenxun-zhuang/AdamW-Scale-free} as it does not -- in contrast to the \texttt{Pytorch} implementation -- multiply the weight decay parameter with the learning rate, which leads to better comparability to \texttt{SPS} and \texttt{ProxSPS} for identical values of $\lambda$.
 For \texttt{SPS} and \texttt{ProxSPS} we use the \texttt{sqrt}-schedule and $\alpha_0=1$.
 We run each method repeatedly using (the same) three different seeds for the dataset shuffling.
\vspace{2mm}\\
\textbf{Discussion:} For \texttt{Resnet56}, from the bottom plot in \cref{fig:cifar10-resnet56-metrics}, we observe that both \texttt{SPS} and \texttt{ProxSPS} work well with \texttt{ProxSPS} leading to smaller weights. For $\lambda=5e-4$, the progress of \texttt{ProxSPS} stagnates after roughly 25 epochs. This can be explained by looking at the adaptive step size term $\zeta_k$ in \cref{fig:cifar10-resnet56-step-sizes}: as it decays over time we have $\tau_k^+ = \zeta_k \ll \alpha_k$.  Since every iteration of \texttt{ProxSPS} shrinks the weights by a  factor $\frac{1}{1+\alpha_k \lambda}$, this leads to a bias towards zero. This suggests that we should choose $\alpha_k$ roughly of the order of $\zeta_k$, for example by using the values of $\zeta_k$ from the previous epoch.
   
For the larger model \texttt{Resnet110} however, \texttt{SPS} does not make progress for a long time because the adaptive step size is very small (see \cref{fig:cifar10-resnet110-metrics} and \cref{fig:cifar10-resnet110-step-sizes}). \texttt{ProxSPS} does not share this issue and performs well after a few initial epochs. For larger values of $\lambda$, the training is also considerably faster than for \texttt{AdamW}. 
Generally, we observe that \texttt{ProxSPS} (and \texttt{SPS} for \texttt{Resnet56}) performs well in comparison to \texttt{AdamW}. This is achieved without extensive hyperparameter tuning (in particular this suggests that setting $c=1$ in \spsmax{} leads to good results and reduces tuning effort). 
%
\begin{figure}[t]
    \centering
    \includegraphics[trim=0 0.2cm 0 0.7cm, width=0.8\textwidth]{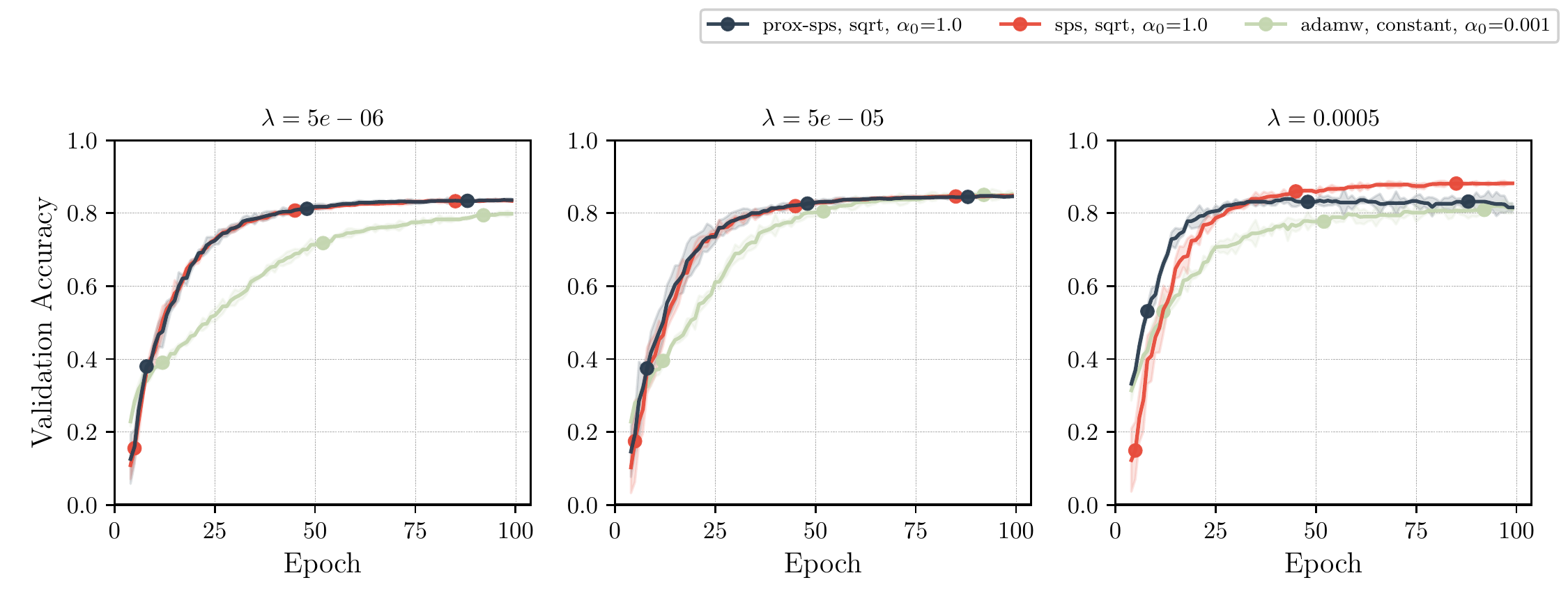}
    \includegraphics[trim=0 0.5cm 0 0.2cm, width=0.8\textwidth]{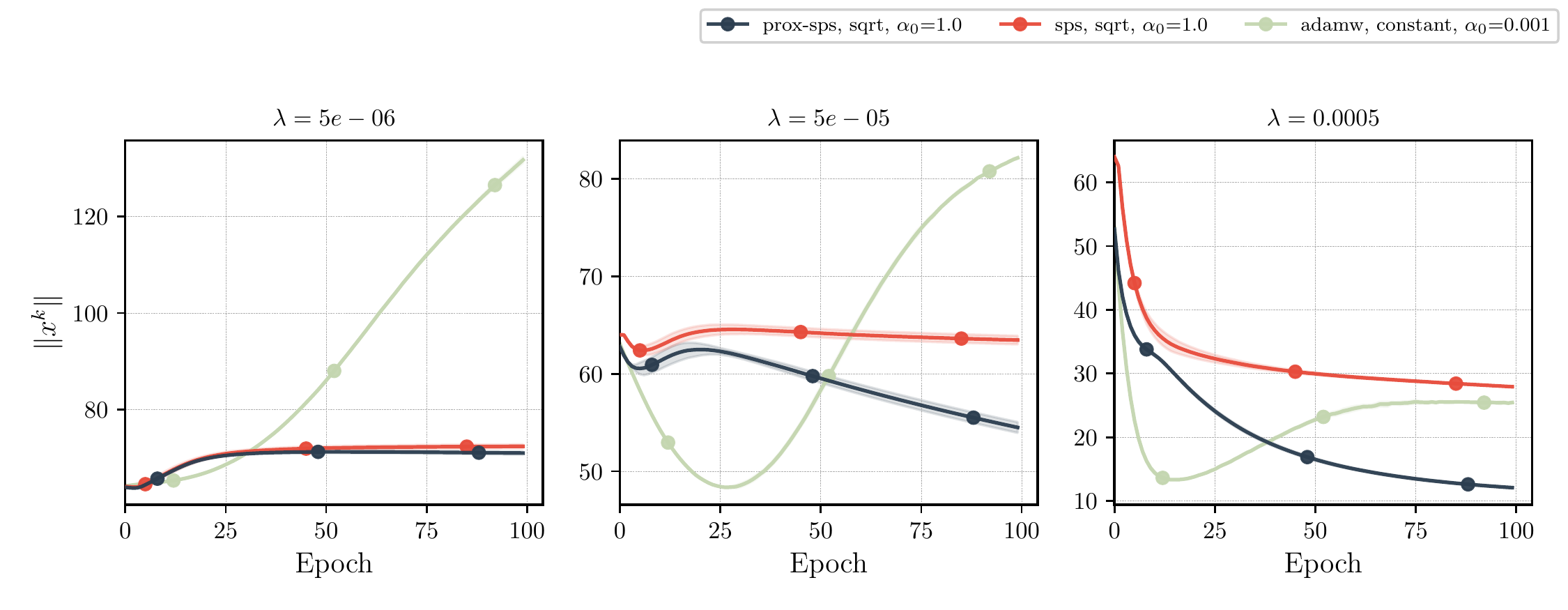}
    \caption[]
{\texttt{ResNet56}: 
(Top): Validation accuracy and model norm for three values of the regularization parameter $\lambda$. Validation accuracy is defined as the ratio of correctly labeled images on the validation set (i.e. \textit{Top-1 accuracy}), plotted as five-epoch running median. 
(Bottom): With $\|x^k\|$ we denote the norm of all learnable parameters at the $k$-th iteration.
Shaded area is two standard deviations over three independent runs.}
\label{fig:cifar10-resnet56-metrics}
\end{figure}
%
%
\begin{figure}[t]
    \centering
    \includegraphics[trim=0 0.2cm 0 0.7cm, width=0.8\textwidth]{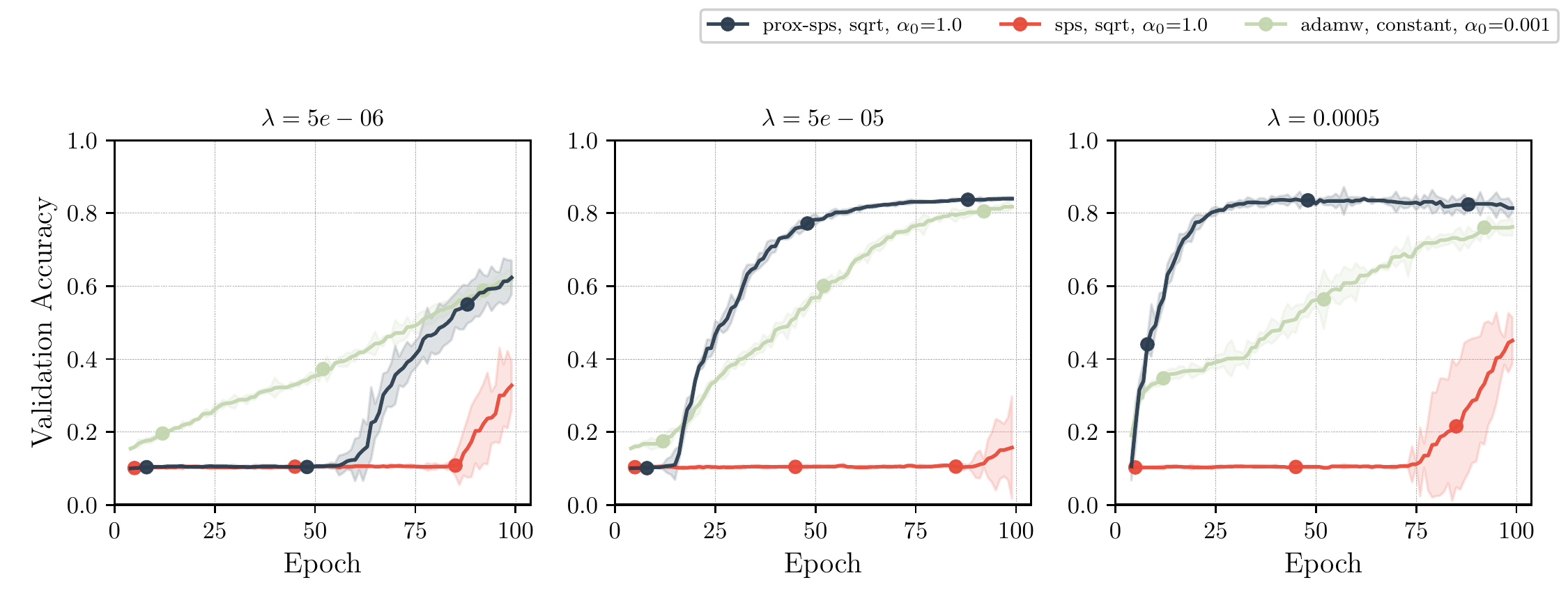}
    \includegraphics[trim=0 0.5cm 0 0.2cm, width=0.8\textwidth]{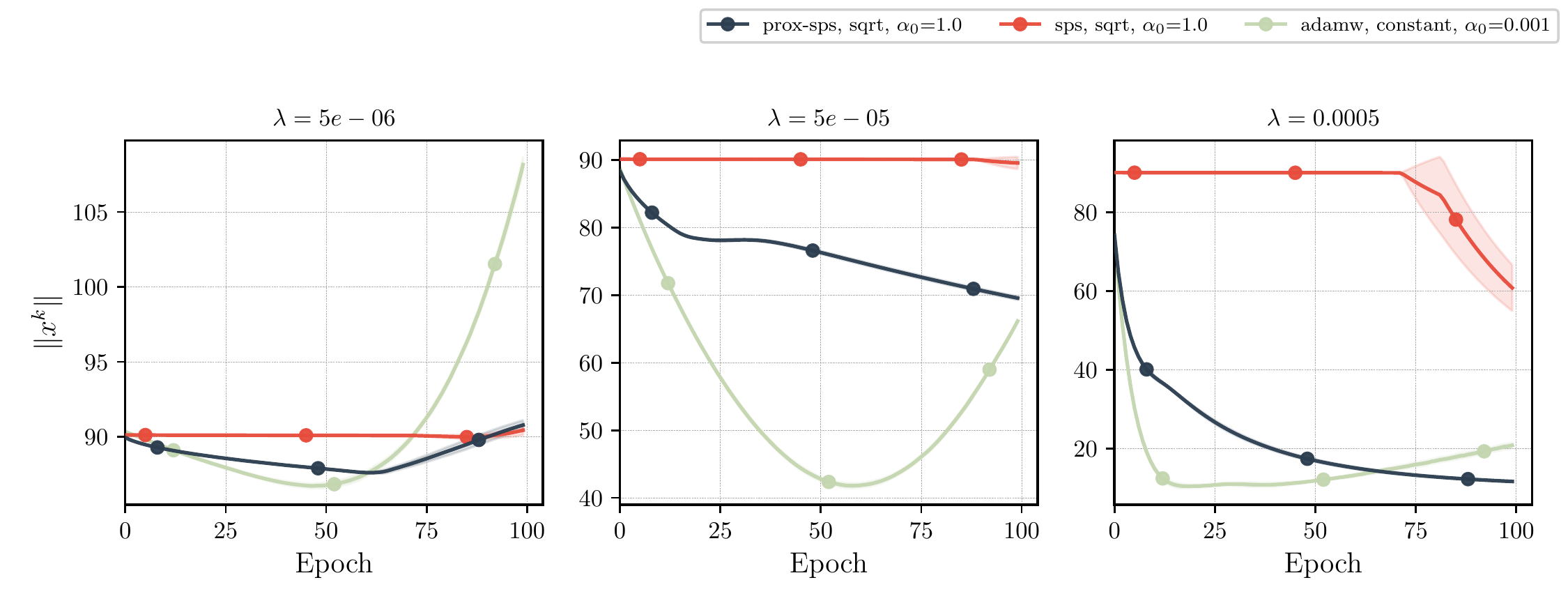}
    \caption[]
{\texttt{ResNet110}: Validation accuracy as five-epoch running median (top) and model norm (bottom) for three values of $\lambda$. Shaded area is two standard deviations over three independent runs.}
\label{fig:cifar10-resnet110-metrics}
\end{figure}
\begin{figure}[t]
\centering
\begin{subfigure}[t]{0.45\textwidth}
    \includegraphics[width=0.99\textwidth]{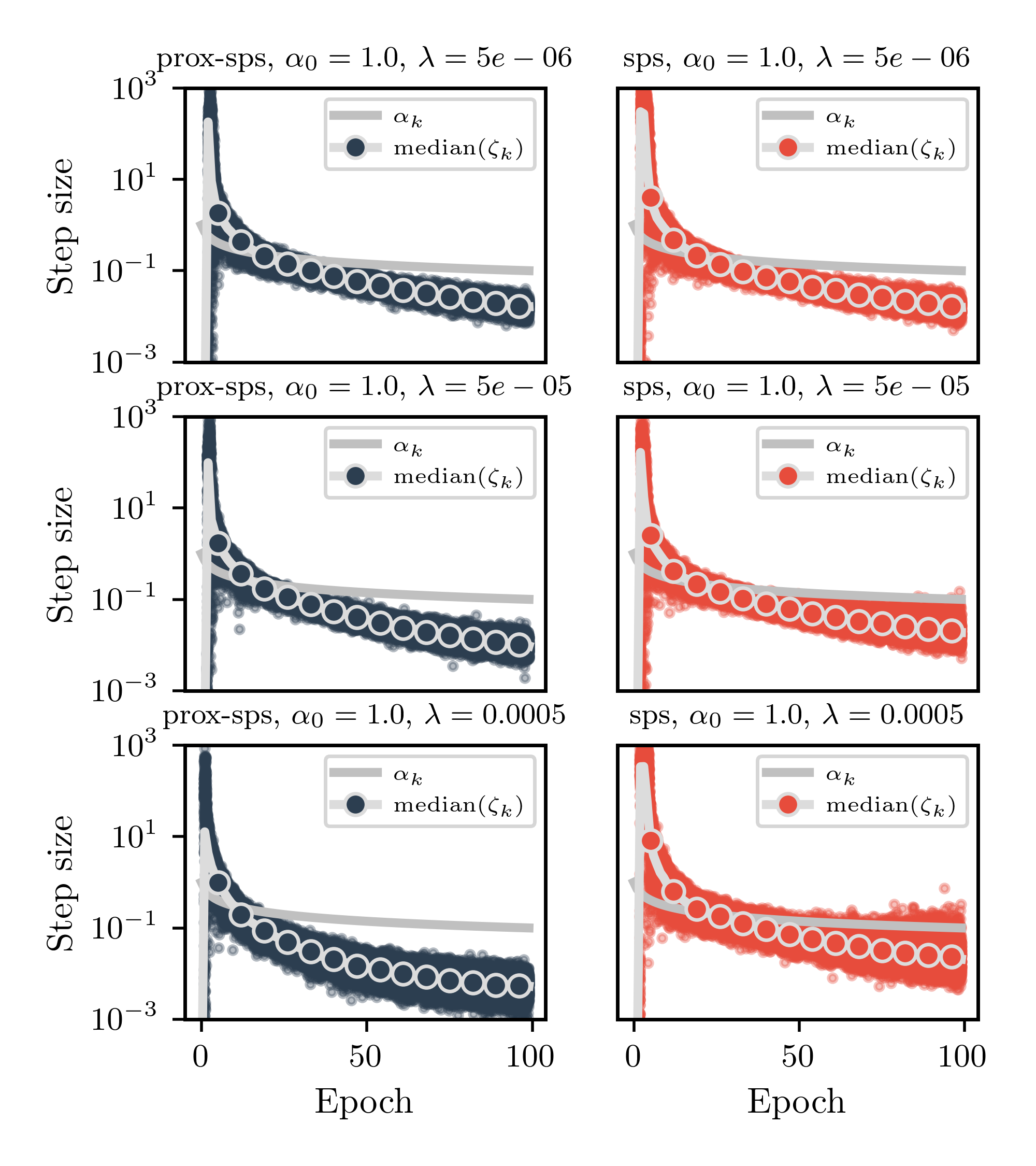}
    \caption{\texttt{ResNet56}}
    \label{fig:cifar10-resnet56-step-sizes}
\end{subfigure}
\begin{subfigure}[t]{0.45\textwidth}
    \includegraphics[width=0.99\textwidth]{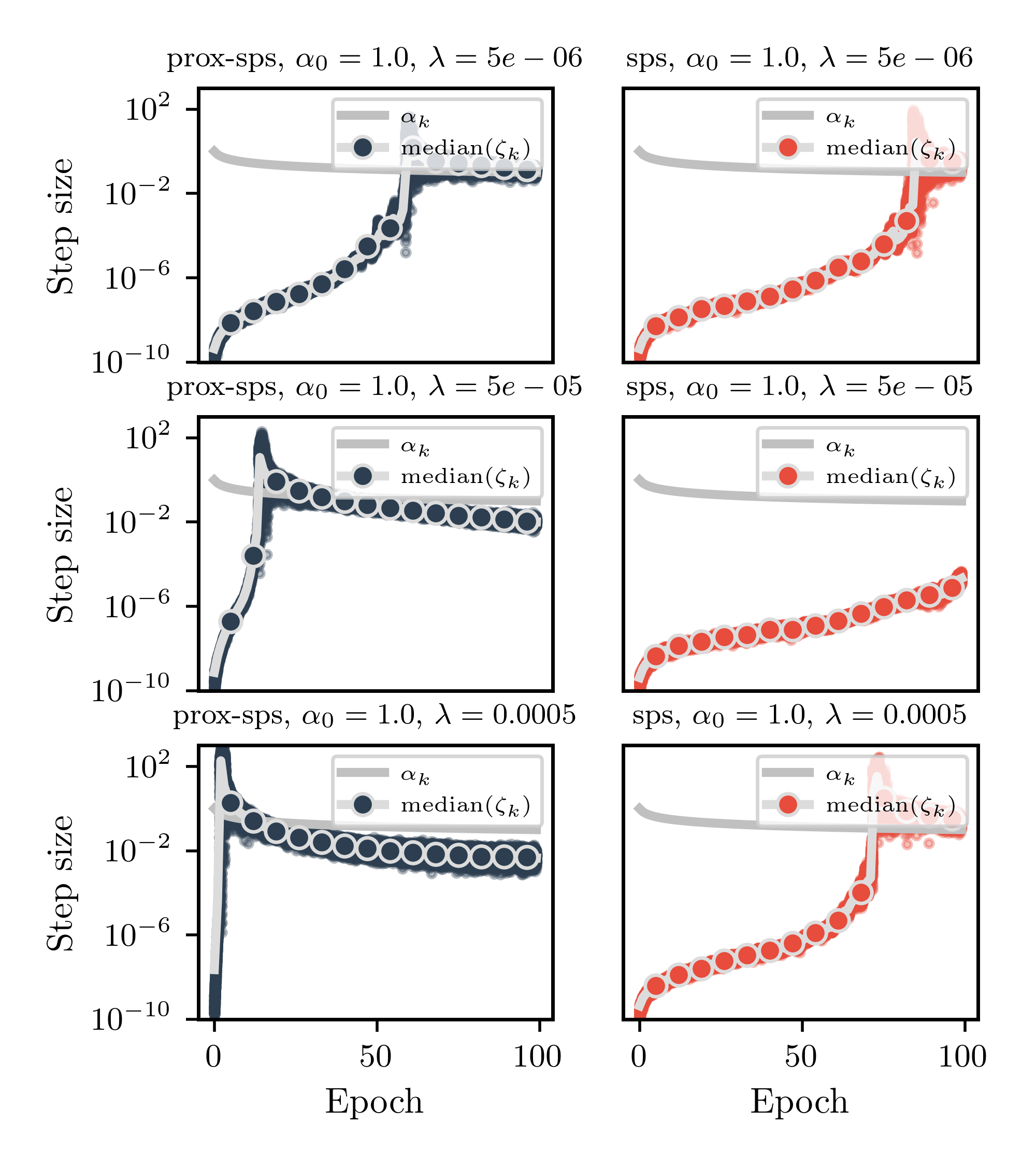}
    \caption{\texttt{ResNet110}}
    \label{fig:cifar10-resnet110-step-sizes}
\end{subfigure}
\caption{Adaptive step sizes for \texttt{SPS} and \texttt{ProxSPS}. See definition of $\zeta_k$ in \cref{sec:numerics-general}. For this plot, we use the results of only one of the three runs.}
\label{fig:cifar10-resnet-step-sizes}
\end{figure}    

Furthermore, we trained a \texttt{ResNet110} \emph{with} batch norm on the \texttt{Imagenet32} dataset. The plots and experimental details can be found in \cref{sec:imagenet32-exp}. From \cref{fig:imagenet32-resnet110-metrics}, we conclude that \texttt{SPS} and \texttt{ProxSPS} perform equally well in this experiment. Both \texttt{SPS} and \texttt{ProxSPS} are less sensititve with respect to the regularization parameter $\lambda$ than \texttt{AdamW} and the adaptive step size leads to faster learning in the initial epochs compared to \texttt{SGD}.
We remark that with batch norm, the effect of $\ell_2$-regularization is still unclear as the output of batch norm layers is invariant to scaling and regularization becomes ineffective \citep{Zhang2019}.

\section{Conclusion}
We proposed and  analyzed \texttt{ProxSPS}, a proximal version of the stochastic Polyak step size.  We arrived at \texttt{ProxSPS} by using the framework of  stochastic model-based proximal point methods. We then used this framework to argue that the resulting model of \texttt{ProxSPS} is a better approximation as compared to the model used by \texttt{SPS} when using regularization. 
Our theoretical results cover a wide range of optimization problems, including convex and nonconvex settings.  We performed a series of experiments comparing  \texttt{ProxSPS},  \texttt{SPS},  \texttt{SGD} and  \texttt{AdamW} when using $\ell_2$-regularization. In particular, we find that \texttt{SPS} can be very hard to tune when using  $\ell_2$-regularization, and in contrast,  \texttt{ProxSPS} performs well for a wide choice of step sizes and regularization parameters.  Finally, for our experiments on image classification, we find that \texttt{ProxSPS} is competitive to \texttt{AdamW}, whereas \texttt{SPS} can fail for larger models. At the same time \texttt{ProxSPS} produces smaller weights in the trained neural network. Having small weights may help reduce the memory footprint of the resulting network, and even suggests which weights can be pruned.  
\subsubsection*{Acknowledgments}
We thank the Simons Foundation for hosting Fabian Schaipp at the Flatiron Institute. We also thank the TUM Graduate Center for their financial support for the visit.
\bibliographystyle{tmlr}
\bibliography{lib}

\begin{thebibliography}{26}
\providecommand{\natexlab}[1]{#1}
\providecommand{\url}[1]{\texttt{#1}}
\expandafter\ifx\csname urlstyle\endcsname\relax
  \providecommand{\doi}[1]{doi: #1}\else
  \providecommand{\doi}{doi: \begingroup \urlstyle{rm}\Url}\fi

\bibitem[Asi \& Duchi(2019)Asi and Duchi]{Asi2019}
Hilal Asi and John~C. Duchi.
\newblock Stochastic (approximate) proximal point methods: convergence,
  optimality, and adaptivity.
\newblock \emph{SIAM Journal on Optimization}, 29\penalty0 (3):\penalty0
  2257--2290, 2019.
\newblock ISSN 1052-6234.
\newblock \doi{10.1137/18M1230323}.

\bibitem[Beck(2017)]{Beck2017}
Amir Beck.
\newblock \emph{First-order methods in optimization}, volume~25 of
  \emph{MOS-SIAM Series on Optimization}.
\newblock Society for Industrial and Applied Mathematics (SIAM), Philadelphia,
  PA; Mathematical Optimization Society, Philadelphia, PA, 2017.
\newblock ISBN 978-1-611974-98-0.
\newblock \doi{10.1137/1.9781611974997.ch1}.

\bibitem[Berrada et~al.(2019)Berrada, Zisserman, and Kumar]{Berrada2019}
Leonard Berrada, Andrew Zisserman, and M.~Pawan Kumar.
\newblock Training neural networks for and by interpolation.
\newblock June 2019.

\bibitem[Bertsekas(1973)]{Bertsekas1973}
Dimitri~P. Bertsekas.
\newblock Stochastic optimization problems with nondifferentiable cost
  functionals.
\newblock \emph{Journal of Optimization Theory and Applications}, 12:\penalty0
  218--231, 1973.
\newblock ISSN 0022-3239.
\newblock \doi{10.1007/BF00934819}.

\bibitem[Bottou(2010)]{Bottou2010}
L\'{e}on Bottou.
\newblock Large-scale machine learning with stochastic gradient descent.
\newblock In \emph{Proceedings of {COMPSTAT}'2010}, pp.\  177--186.
  Physica-Verlag/Springer, Heidelberg, 2010.

\bibitem[Bottou et~al.(2018)Bottou, Curtis, and Nocedal]{Bottou2018}
L\'{e}on Bottou, Frank~E. Curtis, and Jorge Nocedal.
\newblock Optimization methods for large-scale machine learning.
\newblock \emph{SIAM Review}, 60\penalty0 (2):\penalty0 223--311, 2018.
\newblock ISSN 0036-1445.
\newblock \doi{10.1137/16M1080173}.

\bibitem[Clarke(1983)]{Clarke1983}
Frank~H. Clarke.
\newblock \emph{Optimization and nonsmooth analysis}.
\newblock Canadian Mathematical Society Series of Monographs and Advanced
  Texts. John Wiley \& Sons, Inc., New York, 1983.
\newblock ISBN 0-471-87504-X.
\newblock A Wiley-Interscience Publication.

\bibitem[Davis \& Drusvyatskiy(2019)Davis and Drusvyatskiy]{Davis2019}
Damek Davis and Dmitriy Drusvyatskiy.
\newblock Stochastic model-based minimization of weakly convex functions.
\newblock \emph{SIAM Journal on Optimization}, 29\penalty0 (1):\penalty0
  207--239, 2019.
\newblock ISSN 1052-6234.
\newblock \doi{10.1137/18M1178244}.

\bibitem[Drusvyatskiy \& Paquette(2019)Drusvyatskiy and
  Paquette]{Drusvyatskiy2019}
Dmitriy Drusvyatskiy and Courtney Paquette.
\newblock Efficiency of minimizing compositions of convex functions and smooth
  maps.
\newblock \emph{Mathematical Programming}, 178\penalty0 (1-2, Ser. A):\penalty0
  503--558, 2019.
\newblock ISSN 0025-5610.
\newblock \doi{10.1007/s10107-018-1311-3}.

\bibitem[Gower et~al.(2021)Gower, Sebbouh, and Loizou]{Gower2021}
Robert Gower, Othmane Sebbouh, and Nicolas Loizou.
\newblock {SGD} for structured nonconvex functions: Learning rates,
  minibatching and interpolation.
\newblock In Arindam Banerjee and Kenji Fukumizu (eds.), \emph{Proceedings of
  The 24th International Conference on Artificial Intelligence and Statistics},
  volume 130 of \emph{Proceedings of Machine Learning Research}, pp.\
  1315--1323. PMLR, 13--15 Apr 2021.
\newblock URL \url{https://proceedings.mlr.press/v130/gower21a.html}.

\bibitem[Hazan \& Kakade(2019)Hazan and Kakade]{Hazan2019}
Elad Hazan and Sham Kakade.
\newblock Revisiting the {P}olyak step size.
\newblock May 2019.

\bibitem[He et~al.(2016)He, Zhang, Ren, and Sun]{He2016}
Kaiming He, Xiangyu Zhang, Shaoqing Ren, and Jian Sun.
\newblock Deep residual learning for image recognition.
\newblock In \emph{2016 IEEE Conference on Computer Vision and Pattern
  Recognition (CVPR)}, pp.\  770--778, 2016.
\newblock \doi{10.1109/CVPR.2016.90}.

\bibitem[Kingma \& Ba(2015)Kingma and Ba]{Kingma2015}
Diederik~P. Kingma and Jimmy Ba.
\newblock Adam: {A} method for stochastic optimization.
\newblock In Yoshua Bengio and Yann LeCun (eds.), \emph{3rd International
  Conference on Learning Representations, {ICLR} 2015, San Diego, CA, USA, May
  7-9, 2015, Conference Track Proceedings}, 2015.

\bibitem[Loizou et~al.(2021)Loizou, Vaswani, Hadj~Laradji, and
  Lacoste-Julien]{Loizou2021}
Nicolas Loizou, Sharan Vaswani, Issam Hadj~Laradji, and Simon Lacoste-Julien.
\newblock Stochastic {P}olyak step-size for {SGD}: An adaptive learning rate
  for fast convergence.
\newblock In Arindam Banerjee and Kenji Fukumizu (eds.), \emph{Proceedings of
  The 24th International Conference on Artificial Intelligence and Statistics},
  volume 130 of \emph{Proceedings of Machine Learning Research}, pp.\
  1306--1314. PMLR, 13--15 Apr 2021.
\newblock URL \url{https://proceedings.mlr.press/v130/loizou21a.html}.

\bibitem[Loshchilov \& Hutter(2019)Loshchilov and Hutter]{Loshchilov2019}
Ilya Loshchilov and Frank Hutter.
\newblock Decoupled weight decay regularization.
\newblock In \emph{7th International Conference on Learning Representations,
  {ICLR} 2019, New Orleans, LA, USA, May 6-9, 2019}. OpenReview.net, 2019.
\newblock URL \url{https://openreview.net/forum?id=Bkg6RiCqY7}.

\bibitem[Orvieto et~al.(2022)Orvieto, Lacoste-Julien, and Loizou]{Orvieto2022}
Antonio Orvieto, Simon Lacoste-Julien, and Nicolas Loizou.
\newblock Dynamics of {SGD} with stochastic {P}olyak stepsizes: Truly adaptive
  variants and convergence to exact solution.
\newblock May 2022.

\bibitem[Paren et~al.(2022)Paren, Berrada, Poudel, and Kumar]{Paren2022}
Alasdair Paren, Leonard Berrada, Rudra P.~K. Poudel, and M.~Pawan Kumar.
\newblock A stochastic bundle method for interpolation.
\newblock \emph{Journal of Machine Learning Research}, 23\penalty0
  (15):\penalty0 1--57, 2022.
\newblock URL \url{http://jmlr.org/papers/v23/20-1248.html}.

\bibitem[Paszke et~al.(2019)Paszke, Gross, Massa, Lerer, Bradbury, Chanan,
  Killeen, Lin, Gimelshein, Antiga, Desmaison, Kopf, Yang, DeVito, Raison,
  Tejani, Chilamkurthy, Steiner, Fang, Bai, and Chintala]{Paszke2019}
Adam Paszke, Sam Gross, Francisco Massa, Adam Lerer, James Bradbury, Gregory
  Chanan, Trevor Killeen, Zeming Lin, Natalia Gimelshein, Luca Antiga, Alban
  Desmaison, Andreas Kopf, Edward Yang, Zachary DeVito, Martin Raison, Alykhan
  Tejani, Sasank Chilamkurthy, Benoit Steiner, Lu~Fang, Junjie Bai, and Soumith
  Chintala.
\newblock Pytorch: An imperative style, high-performance deep learning library.
\newblock In \emph{Advances in Neural Information Processing Systems 32}, pp.\
  8024--8035. Curran Associates, Inc., 2019.
\newblock URL
  \url{http://papers.neurips.cc/paper/9015-pytorch-an-imperative-style-high-performance-deep-learning-library.pdf}.

\bibitem[Polyak(1987)]{Polyak1987}
Boris~T. Polyak.
\newblock \emph{Introduction to optimization}.
\newblock Translations Series in Mathematics and Engineering. Optimization
  Software, Inc., Publications Division, New York, 1987.
\newblock ISBN 0-911575-14-6.
\newblock Translated from the Russian, With a foreword by Dimitri P. Bertsekas.

\bibitem[Prazeres \& Oberman(2021)Prazeres and Oberman]{Prazeres2021}
Mariana Prazeres and Adam~M. Oberman.
\newblock Stochastic gradient descent with {P}olyak's learning rate.
\newblock \emph{Journal of Scientific Computing}, 89\penalty0 (1):\penalty0
  Paper No. 25, 16, 2021.
\newblock ISSN 0885-7474.
\newblock \doi{10.1007/s10915-021-01628-3}.

\bibitem[Rivera-Mu{\~{n}}oz et~al.(2022)Rivera-Mu{\~{n}}oz, Giraldo-Forero, and
  Martinez-Vargas]{RiveraMunoz2022}
L.M. Rivera-Mu{\~{n}}oz, A.F. Giraldo-Forero, and J.D. Martinez-Vargas.
\newblock Deep matrix factorization models for estimation of missing data in a
  low-cost sensor network to measure air quality.
\newblock \emph{Ecological Informatics}, 71:\penalty0 101775, 2022.
\newblock ISSN 1574-9541.
\newblock \doi{https://doi.org/10.1016/j.ecoinf.2022.101775}.
\newblock URL
  \url{https://www.sciencedirect.com/science/article/pii/S1574954122002254}.

\bibitem[Robbins \& Monro(1951)Robbins and Monro]{Robbins1951}
Herbert Robbins and Sutton Monro.
\newblock A stochastic approximation method.
\newblock \emph{Ann. Math. Statistics}, 22:\penalty0 400--407, 1951.
\newblock ISSN 0003-4851.
\newblock \doi{10.1214/aoms/1177729586}.

\bibitem[Rockafellar \& Wets(1998)Rockafellar and Wets]{Rockafellar1998}
R.~Tyrrell Rockafellar and Roger J.-B. Wets.
\newblock \emph{Variational analysis}, volume 317 of \emph{Grundlehren der
  mathematischen Wissenschaften [Fundamental Principles of Mathematical
  Sciences]}.
\newblock Springer-Verlag, Berlin, 1998.
\newblock ISBN 3-540-62772-3.
\newblock \doi{10.1007/978-3-642-02431-3}.

\bibitem[Srebro et~al.(2004)Srebro, Rennie, and Jaakkola]{Srebro2004}
Nathan Srebro, Jason Rennie, and Tommi Jaakkola.
\newblock Maximum-margin matrix factorization.
\newblock In L.~Saul, Y.~Weiss, and L.~Bottou (eds.), \emph{Advances in Neural
  Information Processing Systems}, volume~17. MIT Press, 2004.
\newblock URL
  \url{https://proceedings.neurips.cc/paper/2004/file/e0688d13958a19e087e123148555e4b4-Paper.pdf}.

\bibitem[Zhang et~al.(2019)Zhang, Wang, Xu, and Grosse]{Zhang2019}
Guodong Zhang, Chaoqi Wang, Bowen Xu, and Roger~B. Grosse.
\newblock Three mechanisms of weight decay regularization.
\newblock In \emph{7th International Conference on Learning Representations,
  {ICLR} 2019, New Orleans, LA, USA, May 6-9, 2019}. OpenReview.net, 2019.
\newblock URL \url{https://openreview.net/forum?id=B1lz-3Rct7}.

\bibitem[Zhuang et~al.(2022)Zhuang, Liu, Cutkosky, and Orabona]{Zhuang2022}
Zhenxun Zhuang, Mingrui Liu, Ashok Cutkosky, and Francesco Orabona.
\newblock Understanding {A}dam{W} through proximal methods and scale-freeness.
\newblock \emph{Transactions on Machine Learning Research}, 2022.
\newblock URL \url{https://openreview.net/forum?id=IKhEPWGdwK}.

\end{thebibliography}
\newpage
\tableofcontents

\appendix
\section{Missing Proofs} \label{sec:appendix-proofs}
\subsection{Proofs of model-based update formula}
\begin{lemma}
\label{lem:prox-sps-update-long}
For $\lambda \geq 0$, let $\phi(x)=\frac{\lambda}{2}\|x\|^2$ and let $g \in \partial f(x;s)$ and $C(s) \leq \inf_{z\in\R^n}f(z;s)$ hold for all $s\in\mathcal{S}$. For 
\begin{align*}
\psi_{x}(y;s) = f_x(y;s) + \phi(y), \quad f_x(y;s) = \max\{f(x;s) + \iprod{g}{y-x} , C(s)\},
\end{align*}
consider the update 
\begin{align}\label{prob-update-l2}
    x^{k+1} = \argmin_{x\in \R^n} \psi_{x^k}(x;S_k) + \frac{1}{2\alpha_k} \|x-x^k\|^2.
\end{align}
Denote $C_k:=C(S_k)$ and let $g_k\in \partial f(x^k;S_k)$. Define
\begin{align*}
    \tau_k^+ := \begin{cases} 0 \quad &\text{if } g_k=0,\\ 
    \min\left\{\alpha_k, \left(\frac{(1+\alpha_k\lambda)(f(x^k;S_k) - C_k) - \alpha_k\lambda \iprod{g_k}{x^k}}{\|g_k\|^2}\right)_+\right\} \quad &\text{else.}\end{cases}
\end{align*}
Then, we have
\begin{align}
\label{eqn:compact_update}
    x^{k+1} &= \frac{1}{1+\alpha_k\lambda} x^k - \frac{\tau_k^+}{1+\alpha_k\lambda}g_k = \frac{1}{1+\alpha_k\lambda} \Big(x^k - \tau_k^+g_k\Big) = \prox{\alpha_k \phi}(x^k - \tau_k^+ g_k).
\end{align}
Define $\tau_k := 0$ if $g_k=0$ and $\tau_k:= \min\left\{\alpha_k, \frac{(1+\alpha_k\lambda)(f(x^k;S_k) - C_k) - \alpha_k\lambda \iprod{g_k}{x^k}}{\|g_k\|^2}\right\}$ else. Then, it holds $\tau_k \leq \tau_k^+$ and 
\begin{align}
    \psi_{x^k}(x^{k+1}; S_k) = f(x^k;S_k) -\tfrac{\alpha_k\lambda}{1+\alpha_k\lambda} \iprod{g_k}{x^k} -\tfrac{\tau_k}{1+\alpha_k\lambda}\|g_k\|^2 + \phi(x^{k+1}). \label{eqn:f-val-reg-2}
\end{align}
\end{lemma}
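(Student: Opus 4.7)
The plan is to verify the update formula~\eqref{eqn:compact_update} via the first-order optimality condition for~\eqref{prob-update-l2}, doing a case analysis driven by the non-smoothness of the max in $f_{x^k}(\cdot; S_k)$. Let $F(y) := \max\{\ell(y), C_k\} + \tfrac{\lambda}{2}\|y\|^2 + \tfrac{1}{2\alpha_k}\|y-x^k\|^2$, where $\ell(y) := f(x^k;S_k) + \iprod{g_k}{y-x^k}$ is the linearization. Since $F$ is $(\lambda+\alpha_k^{-1})$-strongly convex, the minimizer $x^{k+1}$ is unique, and it is characterized by $0 \in \partial F(x^{k+1})$. The subdifferential of the max term equals $\{g_k\}$ when $\ell(y) > C_k$, $\{0\}$ when $\ell(y) < C_k$, and $\{\mu g_k : \mu \in [0,1]\}$ when $\ell(y) = C_k$.

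First I would dispense with the trivial case $g_k = 0$: then the max reduces to the constant $f(x^k;S_k)$, and minimizing $\tfrac{\lambda}{2}\|y\|^2 + \tfrac{1}{2\alpha_k}\|y-x^k\|^2$ gives $x^{k+1} = \prox{\alpha_k\phi}(x^k)$, consistent with $\tau_k^+ = 0$. For $g_k \neq 0$, the identity $\lambda y + \alpha_k^{-1}(y - x^k) = -\tau g_k/\alpha_k$ with $\tau := \alpha_k\mu$ rearranges to $y = \prox{\alpha_k\phi}(x^k - \tau g_k) = (x^k - \tau g_k)/(1+\alpha_k\lambda)$. So $x^{k+1}$ has the claimed form, and it only remains to show that the value $\tau = \tau_k^+$ from the lemma is exactly the $\mu \in [0,1]$ compatible with the subdifferential at $x^{k+1}$.

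The case analysis proceeds as follows: setting $N_k := (1+\alpha_k\lambda)(f(x^k;S_k) - C_k) - \alpha_k\lambda\iprod{g_k}{x^k}$, a direct computation yields $\ell(x^{k+1}) - C_k = (N_k - \tau \|g_k\|^2)/(1+\alpha_k\lambda)$. So (i) if $N_k \leq 0$, the choice $\tau_k^+ = 0$ gives $\ell(x^{k+1}) \leq C_k$ and $0 \in \partial_y\max\{\ell(y),C_k\}|_{y=x^{k+1}}$; (ii) if $N_k \geq \alpha_k\|g_k\|^2$, the choice $\tau_k^+ = \alpha_k$ gives $\ell(x^{k+1}) \geq C_k$, and $g_k = \alpha_k g_k/\alpha_k$ is in the subdifferential; (iii) if $0 < N_k < \alpha_k\|g_k\|^2$, the choice $\tau_k^+ = N_k/\|g_k\|^2 \in (0,\alpha_k)$ makes $\ell(x^{k+1}) = C_k$ exactly, and $\mu = \tau_k^+/\alpha_k \in (0,1)$ is in $[0,1]$. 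These three cases are exactly what the $\min\{\alpha_k, (N_k/\|g_k\|^2)_+\}$ expression encodes, proving~\eqref{eqn:compact_update}.

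For~\eqref{eqn:f-val-reg-2}, I would plug $x^{k+1} = (x^k - \tau_k^+ g_k)/(1+\alpha_k\lambda)$ into $\ell(x^{k+1})$ to obtain $\ell(x^{k+1}) = f(x^k;S_k) - (\alpha_k\lambda\iprod{g_k}{x^k} + \tau_k^+\|g_k\|^2)/(1+\alpha_k\lambda)$, and then evaluate $\max\{\ell(x^{k+1}),C_k\}$ per case. The main subtlety, which I expect to be the main technical obstacle, is that the stated formula uses the unclipped $\tau_k$ rather than $\tau_k^+$. In cases (ii) and (iii) we have $\tau_k = \tau_k^+$ and the identity is immediate. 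In case (i) one has $\tau_k = N_k/\|g_k\|^2 \leq 0$ so $\tau_k^+ = 0$, and the max picks $C_k$ rather than $\ell(x^{k+1})$; however, substituting the defining identity $\tau_k\|g_k\|^2 = N_k$ into the right-hand side of~\eqref{eqn:f-val-reg-2} collapses it to $C_k + \phi(x^{k+1})$, matching $\psi_{x^k}(x^{k+1};S_k)$. The bound $\tau_k \leq \tau_k^+$ follows at once since $\tau_k^+ = \max\{0,\tau_k\}$ whenever $\tau_k \leq \alpha_k$, which is always the case by the outer $\min$ with $\alpha_k$.
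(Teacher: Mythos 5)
Your proof is correct, and it reaches the same case analysis as the paper but by a somewhat different route. The paper's proof of \cref{lem:prox-sps-update-long} is constructive: it solves three separate subproblems (minimize with the constant branch, minimize with the linear branch, and minimize subject to the equality constraint $\ell(y)=C_k$ via KKT), observes that all three solutions have the parametric form $y(t)=\tfrac{1}{1+\alpha_k\lambda}(x^k-tg_k)$, and then uses strict monotonicity of $t\mapsto \ell(y(t))$ to decide which subproblem produces the true minimizer. You instead write the single subdifferential optimality condition $0\in\partial_y\max\{\ell(y),C_k\}+\lambda y+\alpha_k^{-1}(y-x^k)$, read off the parametric form directly from it, and verify that $\tau=\tau_k^+$ puts $\ell(x^{k+1})-C_k=(N_k-\tau\|g_k\|^2)/(1+\alpha_k\lambda)$ in the region compatible with the chosen subgradient $\mu g_k$, invoking strong convexity for uniqueness. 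This verificational argument is essentially the strategy the paper uses for its \emph{general}-$\phi$ result (\cref{lem:update-prob-general}), here specialized to $\phi=\tfrac{\lambda}{2}\|\cdot\|^2$; it avoids setting up and solving the constrained QP, at the cost of having to propose the candidate first. Your treatment of \eqref{eqn:f-val-reg-2} — in particular the observation that in the clipped case $\tau_k=N_k/\|g_k\|^2\le 0$ the identity $\tau_k\|g_k\|^2=N_k$ collapses the right-hand side to $C_k+\phi(x^{k+1})$ — matches the paper's argument exactly, as does the bound $\tau_k\le\tau_k^+$.
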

%
\begin{proof}
Note that $\max\{f(x^k;S_k) + \iprod{g_k}{y-x^k} , C_k\}$ is convex as a function of $y$. The update is therefore unique. First, if $g_k=0$, then clearly $x^{k+1} = \prox{\alpha_k \phi}(x^k) = \tfrac{1}{1+\alpha_k \lambda}x^k$ and \eqref{eqn:f-val-reg-2} holds true. Now, let $g_k\neq 0$. The solution of \eqref{prob-update-l2} is either in $\{y \vert f(x^k;S_k) + \iprod{g_k}{y-x^k} < C_k\}$, or in $\{y \vert f(x^k;S_k) + \iprod{g_k}{y-x^k} > C_k\}$ or in $\{y \vert f(x^k;S_k) + \iprod{g_k}{y-x^k} = C_k\}$. We therefore solve three problems:
\begin{enumerate}[label=(P\arabic*)]
    \item Solve 
    \[y^+ = \argmin_y C_k + \frac{\lambda}{2}\|y\|^2 + \frac{1}{2\alpha_k}\|y-x^k\|^2. \]
    Clearly, the solution is $y^+ = \frac{1}{1+\alpha_k\lambda}x^k$. This $y^+$ solves \eqref{prob-update-l2} if $f(x^k;S_k) + \iprod{g_k}{y^+-x^k} < C_k$.
    \item Solve 
    \[y^+ = \argmin_y f(x^k;S_k) + \iprod{g_k}{y-x^k} + \frac{\lambda}{2}\|y\|^2 + \frac{1}{2\alpha_k}\|y-x^k\|^2. \]
    The optimality condition is $0= \alpha_k g_k + \alpha_k \lambda y^+ + y^+ -x^k$.
    Thus, the solution is $y^+ = \frac{1}{1+\alpha_k\lambda}(x^k - \alpha_k g_k)$. This $y^+$ solves \eqref{prob-update-l2} if $f(x^k;S_k) + \iprod{g_k}{y^+-x^k} > C_k$.
    \item Solve 
    \begin{align*}
        y^+ = \argmin_y \frac{\lambda}{2}\|y\|^2 + \frac{1}{2\alpha_k}\|y-x^k\|^2, \quad \text{s.t.\  } f(x^k;S_k) + \iprod{g_k}{y-x^k} = C_k. 
    \end{align*}
    The KKT conditions are given by 
    \begin{align*}
        \alpha_k \lambda y + y - x^k + \mu g_k &= 0, \\
        f(x^k;S_k) + \iprod{g_k}{y-x^k} &= C_k.
    \end{align*}
    Taking the inner product of the first equation with $g_k$, we get 
    \begin{align*}
        (1+\alpha_k \lambda) \iprod{g_k}{y} - \iprod{g_k}{x^k} + \mu \|g_k\|^2 = 0.
    \end{align*}
    From the second KKT condition we have $\iprod{g_k}{y} = C_k - f(x^k;S_k) + \iprod{g_k}{x^k}$, hence
    \[ (1+\alpha_k \lambda) \big(C_k-f(x^k;S_k) + \iprod{g_k}{x^k}\big)- \iprod{g_k}{x^k} + \mu \|g_k\|^2 = 0.\]
    Solving for $\mu$ gives $\mu= \frac{(1+\alpha_k\lambda) (f(x^k;S_k)-C_k) - \alpha_k\lambda \iprod{g_k}{x^k}}{\|g_k\|^2}$. From the first KKT condition, we obtain
    \[y^+ = \frac{1}{1+\alpha_k \lambda}\big(x^k - \mu g_k \big) =  \frac{1}{1+\alpha_k \lambda}\big(x^k - \frac{(1+\alpha_k\lambda) (f(x^k;S_k)-C_k) - \alpha_k\lambda \iprod{g_k}{x^k}}{\|g_k\|^2} g_k \big).\]
    This $y^+$ solves \eqref{prob-update-l2} if neither (P1) nor (P2) provided a solution.
\end{enumerate}
For all three cases, the solution takes the form $y^+ = \frac{1}{1+\alpha_k \lambda}[x^k - t g_k]=:y(t)$. As $\|g_k\|^2 > 0$, the term $f(x^k;S_k) + \iprod{g_k}{y(t)-x^k}$ is strictly monotonically decreasing in $t$. We know $f(x^k;S_k) + \iprod{g_k}{y(t)-x^k} = C_k$ for $t=\mu$ (from (P3)). Hence, $f(x^k;S_k) + \iprod{g_k}{y(t)-x^k} < C_k ~(>C_k)$ if and only if $t>\mu ~(t<\mu)$.

We conclude:
\begin{itemize}
    \item If $f(x^k;S_k) + \iprod{g_k}{y(0)-x^k} < C_k$, then the solution to (P1) is the solution to \eqref{prob-update-l2}. This condition is equivalent to $\mu <0$.
    \item If $f(x^k;S_k) + \iprod{g_k}{y(\alpha_k)-x^k} > C_k$, then the solution to (P2) is the solution to \eqref{prob-update-l2}. This condition is equivalent to $\alpha_k < \mu$.
    \item If neither $0 > \mu$ nor $\alpha_k < \mu$ hold, i.e.\ if $\mu \in [0, \alpha_k]$, then the solution to \eqref{prob-update-l2} comes from (P3) and hence is given by $y(\mu)$. 
\end{itemize}
Altogether, we get that $x^{k+1} = \frac{1}{1+\alpha_k \lambda}[x^k - \tau_k^+ g_k]$ with $\tau_k^+ = \min\{\alpha_k, (\mu)_+\}$.\\
Now, we prove \eqref{eqn:f-val-reg-2}. Note that if $g_k\neq0$, then $\tau_k = \min\{\alpha_k, \mu\}$ with $\mu$ defined as in (P3). In the case of (P1), we have $\psi_{x^k}(x^{k+1};S_k) = C_k + \phi(x^{k+1})$. Moreover, it holds $\mu <0$ and as $\alpha_k > 0$ we have $\tau_k = \mu$. Plugging $\tau_k=\mu$ into the right hand-side of \eqref{eqn:f-val-reg-2}, we obtain $C_k+ \phi(x^{k+1})$.\\
In the case of (P2) or (P3), we have $C_k \leq f(x^k;S_k) + \iprod{g_k}{x^{k+1}-x^k}$. Due to $f(x^k;S_k) + \iprod{g_k}{y(t)-x^k} = f(x^k;S_k) - \frac{1}{1+\alpha_k\lambda}\iprod{g_k}{x^k} +\frac{t}{1+\alpha_k\lambda}\|g_k\|^2$, we obtain \eqref{eqn:f-val-reg-2} as $x^{k+1} = y(\alpha_k)$ and $\mu>\alpha_k$ in the case of (P2) and $x^{k+1} = y(\mu)$ and $\mu\leq\alpha_k$ in the case of (P3).
\end{proof}
%
\begin{lemma}
\label{lem:sps-update-unreg}
Consider the model $f_x(y;s) := \max\{f(x;s) + \langle g, y-x\rangle, C(s)\}$ where $g\in \partial f(x;s)$ and $C(s)\leq \inf_{z\in\R^n} f(z;s)$ holds for all $s\in\mathcal{S}$. Then, update \eqref{eqn:model-spp-unreg} is given as 
\begin{align*}
    x^{k+1} = x^k - \gamma_k g_k, \quad \gamma_k = \begin{cases}0 \quad &\text{if } g_k=0,\\
    \min\Big\{\alpha_k, \frac{f(x^k;S^k) - C(S_k)}{\|g_k\|^2}\Big\} \quad &\text{else}.\end{cases}
\end{align*}
where $g_k \in \partial f(x^k;S_k)$. Moreover, it holds
\begin{align}
    \label{eqn:f-val-unreg}
    f_{x^k}(x^{k+1};S_k) = \max \{C(S_k), f(x^k;S_k) - \alpha_k \|g_k\|^2\},
\end{align}
and therefore $ f_{x^k}(x^{k+1};S_k) = f(x^k;S_k) - \gamma_k \|g_k\|^2$.
\end{lemma}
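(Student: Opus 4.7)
The plan is to reduce the lemma to the $\lambda = 0$ specialization of \cref{lem:prox-sps-update-long}. First I would observe that, under \cref{assum:lower-bound}, $C(S_k) \leq \inf_{z\in\R^n} f(z;S_k) \leq f(x^k;S_k)$, so the numerator $f(x^k;S_k) - C_k$ appearing in the definition of $\tau_k^+$ in \cref{lem:prox-sps-update-long} is nonnegative. Hence the positive part there is vacuous and $\tau_k^+$ collapses to $\min\{\alpha_k, (f(x^k;S_k) - C_k)/\|g_k\|^2\}$, which is exactly the $\gamma_k$ in the statement. Setting $\lambda = 0$ (equivalently $\phi\equiv 0$) in \eqref{eqn:compact_update} then gives $x^{k+1} = x^k - \tau_k^+ g_k = x^k - \gamma_k g_k$, while the case $g_k = 0$ is immediate since the model objective $f_{x^k}(y;S_k) + \frac{1}{2\alpha_k}\|y-x^k\|^2$ is then strongly convex in $y$ and minimized at $y = x^k$.

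For the value formula \eqref{eqn:f-val-unreg}, I would specialize \eqref{eqn:f-val-reg-2} to $\lambda = 0$ and $\phi \equiv 0$, which reads $f_{x^k}(x^{k+1};S_k) = f(x^k;S_k) - \gamma_k \|g_k\|^2$, giving the second claimed identity directly. To match the $\max$ form, I would split cases on which branch of $\gamma_k = \min\{\alpha_k, (f(x^k;S_k) - C_k)/\|g_k\|^2\}$ is active: if $\alpha_k \|g_k\|^2 \leq f(x^k;S_k) - C_k$ then $\gamma_k = \alpha_k$ and $f(x^k;S_k) - \alpha_k \|g_k\|^2 \geq C_k$, so the $\max$ is attained by the first argument; otherwise $\gamma_k = (f(x^k;S_k) - C_k)/\|g_k\|^2$, which gives $f_{x^k}(x^{k+1};S_k) = C_k$ while $f(x^k;S_k) - \alpha_k \|g_k\|^2 < C_k$ in this regime, so the $\max$ is attained by $C_k$.

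Should one prefer a fully self-contained argument, the three-case analysis used in the proof of \cref{lem:prox-sps-update-long} carries over verbatim with the regularization terms dropped: case (P1) collapses to $y^+ = x^k$ but is then excluded by the lower-bound inequality $C(S_k) \leq f(x^k;S_k)$; case (P2) yields the unconstrained gradient step $y^+ = x^k - \alpha_k g_k$, valid when $\alpha_k \|g_k\|^2 \leq f(x^k;S_k) - C_k$; and case (P3) is a one-dimensional Lagrangian problem on the hyperplane $f(x^k;S_k) + \iprod{g_k}{y - x^k} = C_k$ whose KKT conditions yield $y^+ = x^k - \frac{f(x^k;S_k) - C_k}{\|g_k\|^2} g_k$. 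The three cases partition on the sign of $\alpha_k \|g_k\|^2 - (f(x^k;S_k) - C_k)$ and assemble into the single closed-form $\gamma_k = \min\{\alpha_k, (f(x^k;S_k) - C_k)/\|g_k\|^2\}$.

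There is no substantial obstacle; the only point worth flagging is the sign observation that $f(x^k;S_k) - C_k \geq 0$ under \cref{assum:lower-bound}, which is what removes the $(\cdot)_+$ from $\tau_k^+$ and enables the clean closed form. Everything else is an algebraic specialization of facts already established in \cref{lem:prox-sps-update-long}.
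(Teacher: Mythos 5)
Your argument is correct and matches the paper's own proof, which likewise reduces the lemma to \cref{lem:prox-sps-update-long} with $\lambda=0$ and notes that $f(x^k;S_k)\geq C(S_k)$ forces $\tau_k^+=\tau_k=\gamma_k$. Your additional case split verifying the $\max$ form of \eqref{eqn:f-val-unreg} is a harmless elaboration of what the paper leaves implicit.
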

\begin{proof}
We apply \cref{lem:prox-sps-update-long} with $\lambda = 0$. As $f(x^k;S_k) \geq C(S_k)$, we have that $\tau_k^+ = \tau_k = \gamma_k$.
\end{proof}
\subsection{Proof of  \cref{thm:convex-smooth-reg} }\label{sec:proof-thm-conv}
From now on, denote with $\mathcal{F}_k$ the filtration that is generated by the history of all $S_j$ for $j=0,\dots,k-1$. 
\begin{proof}[Proof of \cref{thm:convex-smooth-reg}]
In the proof, we will denote $g_k=\nabla f(x^k;S_k)$.
We apply \cref{lem:smooth-basic-ineq}, \eqref{eqn:general-basic-ineq-reg} with $x=x^\star$. Due to \cref{lem:sps-model-properties} \ref{lem:sps-model-properties-ii} and convexity of $f(\cdot;s)$ it holds
\[\psi_{x^k}(x^\star;S_k) \leq f(x^\star;S_k)+\phi(x^\star).\]
Together with \eqref{eqn:general-model-value-reg}, we have
\begin{align}
\label{eqn:basic-ineq-reg}
\begin{split}
    (1+\alpha_k\lambda)\|x^{k+1}-x^\star\|^2 &\leq \|x^{k}-x^\star\|^2 - \|x^{k+1}-x^k\|^2 + 2\alpha_k[\phi(x^\star)-\phi(x^{k+1})] \\
    &\quad + 2\alpha_k \big[f(x^\star;S_k) - f(x^k;S_k) - \iprod{g_k}{x^{k+1}-x^k}\big].
\end{split}
\end{align}
Smoothness of $f$ yields
\begin{align*}
    -f(x^k) \leq -f(x^{k+1}) + \iprod{\nabla f(x^k)}{x^{k+1}-x^k} + \tfrac{L}{2}\|x^{k+1}-x^k\|^2.
\end{align*}
Consequently,
\begin{align*}
    &- \iprod{g_k}{x^{k+1}-x^k} = f(x^k) - f(x^k)  - \iprod{g_k}{x^{k+1}-x^k} \\
    & \hspace{2ex} \leq f(x^k) - f(x^{k+1}) + \iprod{\nabla f(x^k)-g_k}{x^{k+1}-x^k} + \tfrac{L}{2}\|x^{k+1}-x^k\|^2\\
    & \hspace{2ex} \leq f(x^k) - f(x^{k+1}) + \frac{\theta\alpha_k}{2}\|\nabla f(x^k)-g_k\|^2 + \frac{1}{2\theta\alpha_k} \|x^{k+1}-x^k\|^2 + \tfrac{L}{2}\|x^{k+1}-x^k\|^2. 
\end{align*}
for any $\theta>0$, where we used Young's inequality in the last step. Plugging into \eqref{eqn:basic-ineq-reg} gives
\begin{align*}
    (1+\alpha_k\lambda)\|x^{k+1}-x^\star\|^2 &\leq \|x^{k}-x^\star\|^2 + \big[\alpha_k L + \toneover{\theta} -1 \big] \|x^{k+1}-x^k\|^2 + 2\alpha_k[\phi(x^\star)-\phi(x^{k+1})] \\
    &\quad + 2\alpha_k \big[f(x^\star;S_k) -f(x^k;S_k) + f(x^k) - f(x^{k+1})\big] + \theta\alpha_k^2 \|\nabla f(x^k)-g_k\|^2.
\end{align*}
Applying conditional expectation, we have $\E[f(x^\star;S_k) \vert \mathcal{F}_k] = f(x^\star) $ and
\begin{align*}
    \E[-f(x^k;S_k) +f(x^k) \vert \mathcal{F}_k] = 0, \quad \E[\|\nabla f(x^k)-g_k\|^2 \vert \mathcal{F}_k] \leq \beta.
\end{align*}
Moreover, by assumption, $\alpha_k L + \toneover{\theta} -1 \leq 0$. Altogether, applying total expectation yields
\begin{align*}
    (1+\alpha_k\lambda)\E\|x^{k+1}-x^\star\|^2 \leq \E\|x^{k}-x^\star\|^2 + 2\alpha_k\E[\psi(x^\star)-\psi(x^{k+1})] + \theta\beta\alpha_k^2
\end{align*}
which proves \eqref{eqn:main-recursion-reg}.\\
\textbf{Proof of \ref{item-a:convex-smooth-reg}:} let $\alpha_k = \frac{1}{\lambda(k+k_0)}$. Denote $\Delta_k:= \E\|x^k-x^\star\|^2$. Rearranging and summing \eqref{eqn:main-recursion-reg}, we have
\begin{align*}
     \sum_{k=0}^{K-1}\E[\psi(x^{k+1})-\psi(x^\star)] \leq \sum_{k=0}^{K-1}\Big[ \tfrac{1}{2\alpha_k}\Delta_k - \tfrac{1+\alpha_k\lambda}{2\alpha_k}\Delta_{k+1}+ \tfrac{\theta\beta\alpha_k}{2}\Big].
\end{align*}
Plugging in $\alpha_k$, we have $\tfrac{1+\alpha_k\lambda}{2\alpha_k} = \tfrac{\lambda(k+k_0)}{2} + \tfrac{\lambda}{2}$ and thus 
\begin{align*}
     \sum_{k=0}^{K-1}\E[\psi(x^{k+1})-\psi(x^\star)] \leq \sum_{k=0}^{K-1}\Big[ \tfrac{\lambda(k+k_0)}{2}\Delta_k - \tfrac{\lambda(k+1+k_0)}{2}\Delta_{k+1} \Big] + \tfrac{\theta\beta}{2}\sum_{k=0}^{K-1}\tfrac{1}{\lambda(k+k_0)}.
\end{align*}
Dividing by $K$ and using convexity of $\psi$\footnote{By assumption $f$ is convex and therefore $\psi$ is convex.}, we have 
\begin{align*}
    \E\Big[\psi\Big(\tfrac{1}{K}\sum_{k=0}^{K-1}x^{k+1} \Big) -\psi(x^\star)\Big] \leq \frac{\lambda k_0}{2K}\|x^0-x^\star\|^2 + \tfrac{\theta\beta}{2\lambda K}\sum_{k=0}^{K-1}\tfrac{1}{k+k_0}.
\end{align*}
Finally, as $k_0\geq 1$, we estimate $\sum_{k=0}^{K-1}\tfrac{1}{k+k_0} \leq \sum_{k=0}^{K-1}\tfrac{1}{k+1} \leq 1+ \ln{K}$ by \cref{lem:bound-series} and obtain \eqref{eqn:strong-conv-decay-reg}.\\
\textbf{Proof of \ref{item-b:convex-smooth-reg}:}
Similar to the proof above, we rearrange and sum \eqref{eqn:main-recursion-reg} from $k=0,\dots,K-1$, and obtain
\begin{align*}
    \sum_{k=0}^{K-1}\alpha_k\E[\psi(x^{k+1})-\psi(x^\star)] \leq \frac{\|x^0-x^\star\|^2}{2}+ \frac{\theta\beta\sum_{k=0}^{K-1}\alpha_k^2}{2}.
\end{align*}
We divide by $\sum_{k=0}^{K-1}\alpha_k$ and use convexity of $\psi$ in order to obtain the left-hand side of \eqref{eqn:conv-decay-reg}. Moreover, by \cref{lem:bound-series} we have 
\[\sum_{k=0}^{K-1}\alpha_k \geq 2\alpha (\sqrt{K+1} -1), \quad \sum_{k=0}^{K-1}\alpha_k^2 \leq \alpha^2 (1+\ln K).\]
Plugging in the above estimates, gives
\begin{align*}
    \E\Big[\psi\Big(\tfrac{1}{\sum_{k=0}^{K-1}\alpha_k}\sum_{k=0}^{K-1}\alpha_k x^{k+1} \Big) -\psi(x^\star)\Big] \leq \frac{\|x^0-x^\star\|^2}{4\alpha(\sqrt{K+1}-1)} + \frac{\theta\beta\alpha (1+\ln K)}{4(\sqrt{K+1} -1)}.
\end{align*}
%
\textbf{Proof of \ref{item-c:convex-smooth-reg}:} If $f$ is $\mu$--strongly--convex, then $\psi$ is $(\lambda+\mu)$--strongly convex and 
\[\psi(x^\star)-\psi(x^{k+1}) \leq -\tfrac{\mu+\lambda}{2}\|x^{k+1}-x^\star\|^2.\]
From \eqref{eqn:main-recursion-reg}, with $\alpha_k = \alpha$,  we get
\begin{align*}
    (1+\alpha(\mu + 2 \lambda)) \E\|x^{k+1} - x^\star\|^2 \leq \E\|x^k - x^\star\|^2 + \theta \beta \alpha^2.
\end{align*}
Doing a recursion of the above from $k=0,\dots,K-1$ gives
\begin{align*}
    \E\|x^{K}-x^\star\|^2 \leq (1+\alpha(\mu+2\lambda))^{-K} \|x^0-x^\star\|^2 + \theta\beta\alpha^2 \sum_{k=1}^{K}(1+\alpha(\mu+2\lambda))^{-k}
\end{align*}
Using the geometric series, $\sum_{k=1}^{K}(1+\alpha(\mu+2\lambda))^{-k} \leq \frac{1+\alpha(\mu+2\lambda)}{\alpha(\mu+2\lambda)} - 1 = \frac{1}{\alpha(\mu+2\lambda)}$, and thus
\begin{align*}
    \E\|x^{K}-x^\star\|^2 \leq (1+\alpha(\mu+2\lambda))^{-K} \|x^0-x^\star\|^2 + \frac{\theta\beta\alpha}{\mu+2\lambda}.
\end{align*}
\end{proof}
%
%
\subsection{Proof of \cref{thm:exact-nonconv-reg}}\label{sec:proof-thm-nonconv}
\begin{proof}[Proof of \cref{thm:exact-nonconv-reg}]
In the proof, we will denote $g_k=\nabla f(x^k;S_k)$. By assumption $f$ is $\rho$-weakly convex and hence $\psi$ is $(\rho-\lambda)$-weakly convex if $\rho> \lambda$ and convex if $\rho \leq \lambda$. Hence, $\hat x^k := \prox{\eta \psi}(x^k)$ is well-defined for $\eta < 1/(\rho-\lambda)$ if $\rho>\lambda$ and for any $\eta>0$ else. Note that $\hat x^k$ is $\mathcal{F}_k$--measurable. 
We apply \cref{lem:smooth-basic-ineq}, \eqref{eqn:general-basic-ineq-reg} with $x=\hat{x}^k$. Due to \cref{lem:sps-model-properties} \ref{lem:sps-model-properties-ii} it holds
\[\psi_{x^k}(\hat x^k;S_k) = f_{x^k}(\hat x^k;S_k) +\phi(\hat{x}^k) \leq f(\hat x^k;S_k) + \tfrac{\rho_{S_k}}{2}\|\hat x^k - x^k\|^2 + \phi(\hat{x}^k). \]
Together with \eqref{eqn:general-model-value-reg}, this gives
\begin{align*}
    (1+\alpha_k \lambda)\|x^{k+1}-\hat x^k\|^2 \leq &(1+\alpha_k \rho_{S_k})\|x^{k}-\hat x^k\|^2  - \|x^{k+1}-x^k\|^2 \\
    &+ 2\alpha_k \Big(\phi(\hat x^k) - \phi(x^{k+1}) + f(\hat x^k;S_k) - f(x^k;S_k) - \iprod{g_k}{x^{k+1}-x^k}\Big)
\end{align*}
Analogous to the proof of \cref{thm:convex-smooth-reg}, due to Lipschitz smoothness, for all $\theta > 0$ we have
\begin{align*}
    -f(x^k;S_k) - \iprod{g_k}{x^{k+1}-x^k} &\leq -f(x^k;S_k) +f(x^k) \\
    & \quad - f(x^{k+1}) + \tfrac{\theta\alpha_k}{2}\|\nabla f(x^k) - g_k\|^2 + \big[\tfrac{1}{2\theta\alpha_k} + \tfrac{L}{2}\big]\|x^{k+1}-x^k\|^2.
\end{align*}
Plugging in gives
\begin{align*}
    &(1+\alpha_k \lambda)\|x^{k+1}-\hat x^k\|^2 \leq (1+\alpha_k \rho_{S_k})\|x^{k}-\hat x^k\|^2 + 2\alpha_k \Big(\phi(\hat x^k) - \phi(x^{k+1})\Big) \\
    & \hspace{2ex} + 2\alpha_k\big(f(\hat{x}^k;S_k)-f(x^k;S_k) + f(x^k) - f(x^{k+1}) + \tfrac{\theta\alpha_k}{2}\|\nabla f(x^k) - g_k\|^2\big) \\
    & \hspace{2ex} + \big[ \tfrac{1}{\theta} + \alpha_k L -1 \big]\|x^{k+1}-x^k\|^2.
\end{align*}
It holds $\E[f(\hat x^k;S_k) - f(x^k;S_k) \vert \mathcal{F}_k] = f(\hat x^k)-f(x^k)$ and $\E[\psi(\hat x^k)\vert \mathcal{F}_k] = \psi(\hat x^k)$. By \cref{asum:gradient-noise}, we have
$\E[ \|g_k - \nabla f(x^k)\|^2 \vert \mathcal{F}_k] \leq \beta.$
Altogether, taking conditional expectation yields
\begin{align*}
    &(1+\alpha_k \lambda)\E[ \|x^{k+1}-\hat x^k\|^2\vert \mathcal{F}_k] \leq (1+\alpha_k\rho) \|x^{k}-\hat x^k\|^2 + 2\alpha_k \E\big[\psi(\hat x^k) - \psi(x^{k+1}) \vert \mathcal{F}_k\big] \\
    & \hspace{2ex} + \alpha_k^2 \theta \beta + \big[ \tfrac{1}{\theta} + \alpha_k L - 1 \big]\E[\|x^{k+1}-x^k\|^2\vert \mathcal{F}_k].
\end{align*}
Next, the definition of the proximal operator implies that almost surely
\begin{align*}
    \psi(\hat x^k) +\tfrac{1}{2\eta}\|\hat x^k - x^k\|^2  \leq \psi(x^{k+1}) + \tfrac{1}{2\eta}\|x^{k+1}-x^k\|^2,
\end{align*}
and hence
\begin{align*}
    \E\big[\psi(\hat x^k) - \psi(x^{k+1}) \vert \mathcal{F}_k\big] \leq \E\big[\tfrac{1}{2\eta}\|x^{k+1}-x^k\|^2 - \tfrac{1}{2\eta}\|\hat x^k - x^k\|^2 \vert \mathcal{F}_k\big].
\end{align*}
Altogether, we have
\begin{align*}
    (1+\alpha_k \lambda)\E[ \|x^{k+1}-\hat x^k\|^2\vert \mathcal{F}_k] &\leq (1+\alpha_k(\rho-\eta^{-1})) \|x^{k}-\hat x^k\|^2 \\
    & \quad + \alpha_k^2 \theta \beta + \big[ \tfrac{1}{\theta} + \alpha_k L +\alpha_k\eta^{-1} - 1 \big]\E[\|x^{k+1}-x^k\|^2\vert \mathcal{F}_k].
\end{align*}
From assumption \eqref{eqn:cond-eta-alpha-reg}, we can drop the last term.
Now, we aim for a recursion in $\mathrm{env}^\eta_\psi$. Using that 
\begin{align*}
    \frac{1+\alpha_k(\rho-\eta^{-1})}{1+\alpha_k\lambda} = \frac{1+\alpha_k\lambda-\alpha_k\lambda+\alpha_k(\rho-\eta^{-1})}{1+\alpha_k\lambda} = 1+\frac{\alpha_k(\rho-\eta^{-1}-\lambda)}{1+\alpha_k\lambda} \leq 1+\alpha_k(\rho-\eta^{-1}-\lambda),
\end{align*}
we get
\begin{align*}
     \E[\mathrm{env}^\eta_\psi(x^{k+1}) \vert \mathcal{F}_k] &\leq \E[\psi(\hat x^k) + \frac{1}{2\eta} \|x^{k+1}-\hat x^k\|^2\vert \mathcal{F}_k]\\
     & \leq \underbrace{\psi(\hat x^k) + \frac{1}{2\eta}\|x^{k}-\hat x^k\|^2}_{=\mathrm{env}^\eta_\psi(x^k)} + \frac{1}{2\eta}\big[\alpha_k(\rho-\eta^{-1}-\lambda)\big]\|x^{k}-\hat x^k\|^2 + \frac{\alpha_k^2}{2\eta} \theta \beta.
\end{align*}
Now using $\|x^{k}-\hat x^k\| = \eta\|\nabla \mathrm{env}^\eta_\psi(x^k)\|$ we conclude
\begin{align*}
    &\E[\mathrm{env}^\eta_\psi(x^{k+1}) \vert \mathcal{F}_k] \leq \mathrm{env}^\eta_\psi(x^k) + \frac{\eta}{2}\big[\alpha_k(\rho-\eta^{-1}-\lambda)\big] \|\nabla \mathrm{env}^\eta_\psi(x^k)\|^2
     + \frac{\alpha_k^2}{2\eta} \theta \beta.
\end{align*}
Due to \eqref{eqn:cond-eta-alpha-reg}, we have $\eta^{-1} +\lambda - \rho > 0$. Taking expectation and unfolding the recursion by summing over $k=0,\dots,K-1$, we get
\begin{align*}
    \sum_{k=0}^{K-1}\tfrac{\alpha_k}{2}(1-\eta(\rho-\lambda)) \E \|\nabla \mathrm{env}^\eta_\psi(x^k)\|^2 \leq \mathrm{env}^\eta_\psi(x^0) - \E[\mathrm{env}^\eta_\psi(x^K)] + \sum_{k=0}^{K-1}\frac{\alpha_k^2}{2\eta} \theta \beta.
\end{align*}
Now using that $\mathrm{env}^\eta_\psi(x^K) \geq \inf \psi$ almost surely, we finally get 
\begin{align}
    \label{eqn:moreau-env-recurse-reg-proof}
    \sum_{k=0}^{K-1}\alpha_k \E \|\nabla \mathrm{env}^\eta_\psi(x^k)\|^2 \leq \frac{2(\mathrm{env}^\eta_\psi(x^0) -\inf \psi)}{1-\eta(\rho-\lambda)}+ \frac{\beta\theta}{\eta(1-\eta(\rho-\lambda))}\sum_{k=0}^{K-1}\alpha_k^2,
\end{align}
which proves \eqref{eqn:moreau-env-recurse-reg}.
Now choose $\alpha_k = \frac{\alpha}{\sqrt{k+1}}$ and divide \eqref{eqn:moreau-env-recurse-reg-proof} by $\sum_{k=0}^{K-1} \alpha_k$. Using \cref{lem:bound-series} for $\sum_{k=0}^{K-1} \alpha_k$ and $\sum_{k=0}^{K-1} \alpha_k^2$, we have
\begin{align*}
    \min_{k=0,\dots,K-1}\E \|\nabla \mathrm{env}^\eta_\psi(x^k)\|^2 \leq \frac{\mathrm{env}^\eta_\psi(x^0) -\inf \psi}{\alpha(1-\eta(\rho-\lambda))(\sqrt{K+1}-1)} + \frac{\beta\theta}{2\eta(1-\eta(\rho-\lambda))}\frac{\alpha(1+\ln K)}{(\sqrt{K+1}-1)}.
\end{align*}
Choosing $\alpha_k = \frac{\alpha}{\sqrt{K}}$ instead, we can identify the left-hand-side of \eqref{eqn:moreau-env-recurse-reg-proof} as $\alpha \sqrt{K} \E \|\nabla \mathrm{env}^\eta_\psi(x^K_\sim)\|^2$. Dividing by $\alpha \sqrt{K}$ and using $\sum_{k=0}^{K-1} \alpha_k^2 = \alpha^2$, we obtain
\begin{align*}
    \E \|\nabla \mathrm{env}^\eta_\psi(x^K_\sim)\|^2 \leq \frac{2(\mathrm{env}^\eta_\psi(x^0) -\inf \psi)}{\alpha(1-\eta(\rho-\lambda))\sqrt{K}} + \frac{\beta\theta}{\eta(1-\eta(\rho-\lambda))}\frac{\alpha}{\sqrt{K}}.
\end{align*}
\end{proof}
%
\section{Auxiliary Lemmas}
%
\begin{lemma}[Thm.\ 4.5 in \citep{Drusvyatskiy2019}]	
\label{lem:env-gradient-mapping}
Let $f$ be $L$-smooth and $\phi$ be proper, closed, convex. For $\eta > 0$, define $\mathcal{G}_\eta(x) := \eta^{-1}\big(x- \prox{\eta \phi}(x-\eta \nabla f(x))\big)$. It holds
\[\tfrac{1}{4}\|\nabla \env{\psi}^{1/(2L)}(x)\| \leq \|\mathcal{G}_{1/L}(x)\| \leq \tfrac{3}{2}(1+\toneover{\sqrt{2}})\|\nabla \env{\psi}^{1/(2L)}(x)\| \quad \forall x \in \R^n.\]
\end{lemma}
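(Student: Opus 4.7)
The plan is to work directly with the two proximal iterates at play. Write $x^+ := \prox{\phi/L}(x - L^{-1}\nabla f(x))$ and $\hat x := \prox{\psi/(2L)}(x)$, so $\mathcal{G}_{1/L}(x) = L(x-x^+)$ and $\nabla \env{\psi}^{1/(2L)}(x) = 2L(x-\hat x)$ by the Moreau-envelope derivative formula from the preliminaries. The two inequalities in the lemma then translate into the task of relating $\|x-x^+\|$ and $\|x-\hat x\|$ up to constants, using only that $\phi$ is proper, closed, convex and that $f$ is $L$-smooth (hence $L$-weakly convex).

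For the upper bound on $\|\mathcal{G}_{1/L}(x)\|$, I would use the optimality condition of the Moreau-envelope prox: there exists $\hat v \in \partial \phi(\hat x)$ with $\nabla f(\hat x) + \hat v = 2L(x-\hat x)$. Rearranging shows $\hat x = \prox{\phi/L}\bigl(2x - \hat x - L^{-1}\nabla f(\hat x)\bigr)$. Since $\phi$ is convex, $\prox{\phi/L}$ is nonexpansive, so comparing with $x^+ = \prox{\phi/L}(x - L^{-1}\nabla f(x))$ and applying $\|\nabla f(x)-\nabla f(\hat x)\|\le L\|x-\hat x\|$ yields a bound $\|x^+-\hat x\| \le \kappa\|x-\hat x\|$. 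The desired inequality then follows from $\|x-x^+\|\le \|x-\hat x\|+\|\hat x-x^+\|$, where carefully balancing the nonexpansiveness step with Young's inequality (rather than a naive triangle estimate) produces the constant $\tfrac{3}{2}(1+\tfrac{1}{\sqrt 2})$.

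For the lower bound on $\|\mathcal{G}_{1/L}(x)\|$, I would exploit the function $\Psi_x(y) := \psi(y) + L\|y-x\|^2$. Since $f$ is $L$-weakly convex and the quadratic is $2L$-strongly convex, $\Psi_x$ is $L$-strongly convex, and $\hat x$ is its unique minimizer, so $\Psi_x(x^+) - \Psi_x(\hat x) \ge \tfrac{L}{2}\|x^+-\hat x\|^2$. On the other hand, the prox-step optimality for $x^+$ gives the explicit subgradient $v := L(x-x^+) - \nabla f(x) \in \partial\phi(x^+)$, so using weak convexity of $f$ at $x^+$ and convexity of $\phi$ at $x^+$ I can upper-bound $\psi(x^+) - \psi(\hat x)$ by a linear expression in $\hat x - x^+$ whose coefficient equals $\nabla f(x) - \nabla f(x^+) - L(x-x^+)$; Lipschitz smoothness then bounds the norm of this coefficient by $2\|\mathcal{G}_{1/L}(x)\|$. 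Combining these two inequalities yields a quadratic inequality in $\|x-\hat x\|$ and $\|x-x^+\|$ of the form $\|x-\hat x\|^2 \le c_1\|x-x^+\|^2 + c_2\|x-x^+\|\cdot\|x-\hat x\|$, which solves to $\|x-\hat x\| \le c\|x-x^+\|$ and gives the factor $\tfrac{1}{4}$.

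The main obstacle is obtaining the sharp constants $\tfrac{1}{4}$ and $\tfrac{3}{2}(1+\tfrac{1}{\sqrt 2})$ rather than the crude $\tfrac{1}{6}$ and $3$ that a straightforward triangle-inequality argument yields. The structural steps (nonexpansiveness of $\prox{\phi/L}$, strong convexity of $\Psi_x$, Lipschitz smoothness) are routine, but extracting the precise numerical constants requires the correct weighting in Young's inequality and careful interpolation between $x$, $x^+$, and $\hat x$ via the induced quadratic forms, exactly as in \citep[Thm.\ 4.5]{Drusvyatskiy2019}.
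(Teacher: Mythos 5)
First, a point of reference: the paper offers no proof of this lemma at all. It is imported verbatim as \citep[Thm.\ 4.5]{Drusvyatskiy2019} and used only to justify the Moreau envelope as a stationarity measure, so there is no in-paper argument to compare against; I am therefore judging your reconstruction on its own terms. Your translation $\mathcal{G}_{1/L}(x)=L(x-x^+)$ and $\nabla \env{\psi}^{1/(2L)}(x)=2L(x-\hat x)$, with $x^+=\prox{\phi/L}(x-L^{-1}\nabla f(x))$ and $\hat x=\prox{\psi/(2L)}(x)$, is correct, and your upper bound goes through --- in fact more easily than you suggest. From $\hat x=\prox{\phi/L}\bigl(2x-\hat x-L^{-1}\nabla f(\hat x)\bigr)$, nonexpansiveness of $\prox{\phi/L}$ and $L$-Lipschitzness of $\nabla f$ give $\|\hat x-x^+\|\le 2\|x-\hat x\|$, hence $\|x-x^+\|\le 3\|x-\hat x\|$ and $\|\mathcal{G}_{1/L}(x)\|\le\tfrac32\|\nabla \env{\psi}^{1/(2L)}(x)\|$ by the plain triangle inequality. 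No Young-type balancing is needed; the larger constant $\tfrac32(1+\tfrac{1}{\sqrt2})$ in the cited theorem is an artifact of the more general composition setting $h(c(x))$ treated there, and your claim that the naive estimate only gives $3$ stems from dropping the factor $2$ in the normalization of $\nabla\env{\psi}^{1/(2L)}$.

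The genuine gap is in the lower bound. Carrying out your plan: strong convexity of $\Psi_x(y)=\psi(y)+L\|y-x\|^2$ at its minimizer $\hat x$ gives $\tfrac L2\|x^+-\hat x\|^2\le\psi(x^+)-\psi(\hat x)+L\|x^+-x\|^2-L\|\hat x-x\|^2$, while your subgradient-plus-smoothness estimate gives $\psi(x^+)-\psi(\hat x)\le 2L\|x-x^+\|\,\|x^+-\hat x\|+\tfrac L2\|x^+-\hat x\|^2$ (you omit the quadratic remainder, but it is there). The two $\tfrac L2\|x^+-\hat x\|^2$ terms cancel, leaving $\|x-\hat x\|^2\le\|x-x^+\|^2+2\|x-x^+\|\,\|x^+-\hat x\|$, and since the only remaining control on $\|x^+-\hat x\|$ is the triangle inequality, this solves to $\|x-\hat x\|\le 3\|x-x^+\|$, i.e.\ exactly the constant $\tfrac16$ you call crude. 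No reweighting of Young's inequality inside this chain recovers $\tfrac14$, because the quadratic term that would let you absorb the cross term has already been cancelled. The missing idea is a \emph{second}, symmetric strong-convexity inequality for the prox-gradient subproblem $M(y)=f(x)+\iprod{\nabla f(x)}{y-x}+\phi(y)+\tfrac L2\|y-x\|^2$ at its minimizer $x^+$, evaluated at $\hat x$. Adding the two inequalities and using the two-sided descent lemma $0\le\Psi_x(y)-M(y)\le L\|y-x\|^2$ yields $L\|x^+-\hat x\|^2\le L\|x-x^+\|^2$, i.e.\ $\|x^+-\hat x\|\le\|x-x^+\|$, whence $\|x-\hat x\|\le 2\|x-x^+\|$ and precisely the factor $\tfrac14$.
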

\begin{lemma}\label{lem:update-prob-general}
Let $c\in\R, a, x^0\in\R^n$ and $\beta>0$ and let $\phi:\R^n\to \R\cup \{\infty\}$ be proper, closed, convex. The solution to 
\begin{align}\label{eqn:update-prob-general}
    y^+ = \argmin_{y\in \R^n}\quad  \big(c+\iprod{a}{y}\big)_+ +\phi(y)+ \frac{1}{2\beta}\|y-x^0\|^2
\end{align}
is given by 
\begin{align}\label{eqn:update-formula-general}
    y^+ = \begin{cases} 
    \prox{\beta \phi}(x^0-\beta a), \quad &\text{if } c+\iprod{a}{\prox{\beta \phi}(x^0-\beta a)} > 0, \\
    \prox{\beta \phi}(x^0), \quad &\text{if } c+\iprod{a}{\prox{\beta \phi}(x^0)} < 0, \\
    \prox{\beta \phi}(x^0-\beta u a) \quad &\text{else, for } u\in[0,1] \text{ such that } c+\iprod{a}{\prox{\beta \phi}(x^0-\beta u a)} = 0.
    \end{cases}
\end{align}
\end{lemma}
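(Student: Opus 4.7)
The plan is to exploit the fact that the objective in \eqref{eqn:update-prob-general} is strongly convex (thanks to the $\frac{1}{2\beta}\|y-x^0\|^2$ term) so the minimizer $y^+$ is unique, and to then characterize it via first-order optimality conditions, using that the subdifferential of $t\mapsto (t)_+$ at $t\in\R$ is $\{1\}$ for $t>0$, $\{0\}$ for $t<0$, and $[0,1]$ for $t=0$. Writing $F(y):=(c+\iprod{a}{y})_+ + \phi(y)+\frac{1}{2\beta}\|y-x^0\|^2$, optimality reads $0\in \partial[(c+\iprod{a}{\cdot})_+](y^+)+\partial\phi(y^+)+\frac{1}{\beta}(y^+-x^0)$, so by the chain rule there exists $u\in[0,1]$ with $x^0-\beta u a \in y^+ + \beta\partial\phi(y^+)$, equivalently $y^+=\prox{\beta\phi}(x^0-\beta u a)$, subject to the sign consistency $u=1$ if $c+\iprod{a}{y^+}>0$, $u=0$ if $c+\iprod{a}{y^+}<0$, and $u\in[0,1]$ arbitrary if $c+\iprod{a}{y^+}=0$.

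First I would handle the two easy cases. If $c+\iprod{a}{\prox{\beta\phi}(x^0-\beta a)}>0$, I set $y^+:=\prox{\beta\phi}(x^0-\beta a)$; this $y^+$ satisfies the optimality condition above with $u=1$, and hence by uniqueness is the minimizer. Analogously, if $c+\iprod{a}{\prox{\beta\phi}(x^0)}<0$, then $y^+:=\prox{\beta\phi}(x^0)$ solves the optimality condition with $u=0$.

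The remaining, boundary case is where I expect the main work. Here we are in the "else" regime, which means
\[
    c+\iprod{a}{\prox{\beta\phi}(x^0-\beta a)}\leq 0 \quad\text{and}\quad c+\iprod{a}{\prox{\beta\phi}(x^0)}\geq 0.
\]
Define the scalar function $\varphi:[0,1]\to\R$, $\varphi(u):=c+\iprod{a}{\prox{\beta\phi}(x^0-\beta u a)}$. The map $\prox{\beta\phi}$ is (firmly) nonexpansive, so $\varphi$ is Lipschitz continuous; moreover firm nonexpansiveness applied to the two inputs $x^0-\beta u_1 a$ and $x^0-\beta u_2 a$ yields $(u_1-u_2)\,\iprod{a}{\prox{\beta\phi}(x^0-\beta u_2 a)-\prox{\beta\phi}(x^0-\beta u_1 a)}\geq 0$, i.e.\ $\varphi$ is non-increasing. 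In particular $\varphi(0)\geq 0\geq \varphi(1)$, so by the intermediate value theorem there exists $u\in[0,1]$ with $\varphi(u)=0$. Setting $y^+:=\prox{\beta\phi}(x^0-\beta u a)$ yields $c+\iprod{a}{y^+}=0$ and $0\in u a + \partial\phi(y^+)+\frac{1}{\beta}(y^+-x^0)$ with $u\in[0,1]$, which is exactly the optimality condition on the boundary. By strong convexity of $F$ the minimizer is unique, so the formula \eqref{eqn:update-formula-general} holds in all three cases.

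The main obstacle is really just the third case, and specifically proving that $\varphi$ does have a zero in $[0,1]$; the firm nonexpansiveness / monotonicity argument above is the cleanest route and avoids any ad hoc construction. Once existence of such a $u$ is established, the rest is a routine verification of KKT-type conditions combined with uniqueness from strong convexity.
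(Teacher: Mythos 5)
Your proposal is correct and follows essentially the same route as the paper's proof: uniqueness from strong convexity, the first-order condition $0\in ua+\partial\phi(y^+)+\tfrac{1}{\beta}(y^+-x^0)$ with $u\in\partial(\cdot)_+(c+\iprod{a}{y^+})$, and the same three-case verification. The only difference is in the boundary case, where the paper deduces existence of the root $u$ indirectly (the unique minimizer must satisfy the necessary optimality condition, and since neither of the first two cases applies it must lie on $\{c+\iprod{a}{y}=0\}$), whereas you establish it directly and constructively via continuity and monotonicity of $u\mapsto c+\iprod{a}{\prox{\beta\phi}(x^0-\beta u a)}$ together with the intermediate value theorem — a slightly more self-contained argument that also justifies the bisection procedure mentioned in the paper's remark.
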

\begin{remark}
The first two conditions can not hold simultaneously due to uniqueness of the solution. If neither of the conditions of the first two cases are satisfied, we have to find the root of $u\mapsto c+\iprod{a}{\prox{\beta \phi}(x^0-\beta u a)}$ for $u\in[0,1]$. Due to strong convexity of the objective in \eqref{eqn:update-prob-general}, we know that there exists a root and hence $y^+$ can be found efficiently with bisection.
\end{remark}
\begin{proof}
The objective of \eqref{eqn:update-prob-general} is strongly convex and hence there exists a unique solution. Due to \citep[Thm.\ 3.63]{Beck2017}, $y$ is the solution to \eqref{eqn:update-prob-general} if and only if it satisfies first-order optimality, i.e.
\begin{align}\label{eqn:foc-update}
    \exists u\in \partial(\cdot)_+(c+\iprod{a}{y}): ~ 0\in ua + \partial \phi(y) + \frac{1}{\beta}(y-x^0).
\end{align}
Now, as $y=\prox{\beta \phi}(z) \iff 0 \in \partial \phi(y) + \frac{1}{\beta}(y-z)$, it holds
\begin{align*}
    \eqref{eqn:foc-update} &\iff \exists u\in \partial(\cdot)_+(c+\iprod{a}{y}): ~0 \in \partial \phi(y) + \frac{1}{\beta}(y-(x^0-\beta ua ))\\
    &\iff \exists u\in \partial(\cdot)_+(c+\iprod{a}{y}): ~y=\prox{\beta \phi}(x^0-\beta u a).
\end{align*}
We distinguish three cases:
\begin{enumerate}
    \item Let $\bar y := \prox{\beta \phi}(x^0-\beta a)$ and suppose that $c+\iprod{a}{\bar y}> 0$. Then $\partial(\cdot)_+(c+\iprod{a}{\bar y}) =\{1\}$ and hence $\bar y$ satisfies \eqref{eqn:foc-update} with $u=1$. Hence, $y^+=\bar y$.
    \item  Let $\bar y := \prox{\beta \phi}(x^0)$ and suppose that $c+\iprod{a}{\bar y}< 0$. Then $\partial(\cdot)_+(c+\iprod{a}{\bar y}) =\{0\}$ and hence $\bar y$ satisfies \eqref{eqn:foc-update} with $u=0$. Hence, $y^+=\bar y$.
    \item If neither the condition of the first nor of the second case of \eqref{eqn:update-formula-general} are satisfied, then, as \eqref{eqn:foc-update} is a necessary condition for the solution $y^+$, it must hold $c+\iprod{a}{y^+} = 0$. 
    Hence, there exists a $u\in \partial(\cdot)_+(c+\iprod{a}{y^+}) = [0,1]$ such that
    \[c+\iprod{a}{\prox{\beta \phi}(x^0-u\beta a)} = 0.\]
\end{enumerate}
\end{proof}
\begin{lemma}\label{lem:bound-series}
For any $K\geq 1$ it holds
\begin{align*}
    &\sum_{k=0}^{K-1} \tfrac{1}{k+1} = 1+ \sum_{k=1}^{K-1} \tfrac{1}{k+1} \leq 1+\int_0^{K-1} \tfrac{1}{s+1}ds = 1+\ln K,\\
    &\sum_{k=0}^{K-1} \tfrac{1}{\sqrt{k+1}} \geq \int_0^{K} \tfrac{1}{\sqrt{s+1}}ds = 2\sqrt{K+1} -  2.
\end{align*}
\end{lemma}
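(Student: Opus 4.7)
The plan is to prove both inequalities by the standard integral-comparison technique, exploiting monotonicity of the two integrands. Both $s\mapsto 1/(s+1)$ and $s\mapsto 1/\sqrt{s+1}$ are continuous and strictly decreasing on $[0,\infty)$, so each summand can be sandwiched between integrals of the function over adjacent unit intervals. The direction of the desired inequality dictates which adjacent interval to use.

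For the upper bound on the harmonic-type sum, I would first split off the $k=0$ term, which equals $1$, and then bound each remaining term $\tfrac{1}{k+1}$ for $k\geq 1$ by the integral of $\tfrac{1}{s+1}$ over the interval to its left, i.e.\ use the monotonicity inequality
\begin{equation*}
    \tfrac{1}{k+1} \;=\; \min_{s\in[k-1,k]}\tfrac{1}{s+1} \;\leq\; \int_{k-1}^{k}\tfrac{1}{s+1}\,ds.
\end{equation*}
Summing from $k=1$ to $K-1$ telescopes the integrals into $\int_0^{K-1}\tfrac{1}{s+1}\,ds=\ln K$, and adding back the $k=0$ term yields the claimed bound $1+\ln K$.

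For the lower bound on the square-root sum, I would instead bound each term $\tfrac{1}{\sqrt{k+1}}$ from below by the integral of $\tfrac{1}{\sqrt{s+1}}$ over the interval to its right, since the integrand is decreasing:
\begin{equation*}
    \tfrac{1}{\sqrt{k+1}} \;=\; \max_{s\in[k,k+1]}\tfrac{1}{\sqrt{s+1}} \;\geq\; \int_{k}^{k+1}\tfrac{1}{\sqrt{s+1}}\,ds.
\end{equation*}
Summing from $k=0$ to $K-1$ telescopes into $\int_0^{K}\tfrac{1}{\sqrt{s+1}}\,ds$, and evaluating this integral via the antiderivative $2\sqrt{s+1}$ produces $2\sqrt{K+1}-2$.

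There is no real obstacle here: the lemma is elementary and boils down to choosing the correct adjacent interval for each integral comparison. The only detail to be careful about is the indexing, namely peeling off the $k=0$ term in the first sum (otherwise the left-endpoint comparison would produce a divergent integral over $[-1,0]$), while keeping it included in the second sum where the right-endpoint comparison causes no such issue.
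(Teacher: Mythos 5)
Your proof is correct and uses exactly the integral-comparison argument that the paper embeds directly in the statement of the lemma: peeling off the $k=0$ term and bounding the remaining harmonic terms by left-adjacent integrals, and bounding the square-root terms from below by right-adjacent integrals. Nothing to add.
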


The following is a detailled version of \cref{prop:one-sided-model-reg}. We refer to \cref{sec:proxsps-theory} for context.
\begin{proposition} \label{prop:one-sided-model-reg-appendix}
Let \cref{assum:inf-samples} and \cref{asum:phi} hold and assume that there is an open, convex set $U$ containing $\mathrm{dom}~\phi$.
Let $f(\cdot;s)$ be $\rho_s$--weakly convex for all $s\in\mathcal{S}$ and let $\rho=\E[\rho_S]$. 
Assume that there exists $G_s \in \R_+$ for all $s\in \mathcal{S}$, such that $\mathsf{G}:=\sqrt{\E[G_S^2]}< \infty$ and 
\begin{align}\label{eqn:glob-bounded-grads-appendix}
        \|g(x;s)\| \leq G_s \quad \forall g(x;s) \in \partial f(x;s),~ \forall x \in U.
\end{align}
Then, $\psi_{x}(y;s)$ (given in \eqref{eqn:model-sps-prox}) satisfies the following:
\begin{enumerate}[label=(B\arabic*)]
    \item It is possible to generate infinitely many i.i.d.\ realizations $S_1,S_2,\dots$ from $\mathcal{S}$.
    \item It holds $\E[f_x(x;S)] = f(x)$ and $\E[f_x(y;S)] \leq f(y) + \frac{\rho}{2}\|y-x\|^2$ for all $x,y\in \R^n$.
    \item The mapping $\psi_x(\cdot;s)=f_x(\cdot;s)+\phi(\cdot)$ is convex for all $x\in \R^n$ and all $s\in \mathcal{S}$.
    \item For all $x,y \in U$ and $s\in \mathcal{S}$, it holds
    $f_x(x;s) - f_x(y;s) \leq G_s \|x-y\|. $
\end{enumerate}
\end{proposition}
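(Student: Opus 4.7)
The plan is to verify the four properties (B1)--(B4) in turn, leaning heavily on \cref{lem:sps-model-properties} and the $\rho$-weak convexity / Lipschitz apparatus already set up in the paper. Properties (B1)--(B3) will essentially be bookkeeping; the only place where the subgradient bound~\eqref{eqn:glob-bounded-grads-appendix} is actually used is (B4).

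First I would dispatch (B1) by simply invoking \cref{assum:inf-samples} verbatim. For (B2), I would use part~\ref{lem:sps-model-properties-ii} of \cref{lem:sps-model-properties}: the pointwise identity $f_x(x;s) = f(x;s)$ gives $\E[f_x(x;S)] = \E[f(x;S)] = f(x)$, while the pointwise inequality $f_x(y;s) \le f(y;s) + \tfrac{\rho_s}{2}\|y-x\|^2$ integrates (using $\rho = \E[\rho_S]$ and Fubini/linearity of expectation) to
\begin{equation*}
    \E[f_x(y;S)] \;\le\; f(y) + \tfrac{\rho}{2}\|y-x\|^2.
\end{equation*}
For (B3) I would combine part~\ref{lem:sps-model-properties-i} of \cref{lem:sps-model-properties}, which gives convexity of $y\mapsto f_x(y;s)$, with \cref{asum:phi}, which gives convexity of $\phi$; the sum $\psi_x(\cdot;s)=f_x(\cdot;s)+\phi(\cdot)$ is therefore convex.

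The only real step is (B4). Here I would note that, by definition,
\begin{equation*}
    f_x(y;s) \;=\; \max\bigl\{\,\ell_{x,s}(y),\; C(s)\,\bigr\}, \qquad \ell_{x,s}(y) := f(x;s) + \iprod{g(x;s)}{y-x},
\end{equation*}
for some $g(x;s) \in \partial f(x;s)$. The constant map $y\mapsto C(s)$ is trivially $0$-Lipschitz, and the affine map $\ell_{x,s}$ is $\|g(x;s)\|$-Lipschitz, hence $G_s$-Lipschitz on $U$ by~\eqref{eqn:glob-bounded-grads-appendix}. Since a pointwise maximum of $L$-Lipschitz functions is $L$-Lipschitz (with $L = \max$ of the individual constants), $f_x(\cdot;s)$ is $G_s$-Lipschitz on $U$, and in particular
\begin{equation*}
    f_x(x;s) - f_x(y;s) \;\le\; |f_x(x;s) - f_x(y;s)| \;\le\; G_s\|x-y\| \qquad \forall\,x,y\in U.
\end{equation*}

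The main obstacle, such as it is, lies in (B4): one must recognize that the truncation against $C(s)$ does not destroy the Lipschitz property and that the globally bounded subgradient assumption is precisely what is needed to control the slope of $\ell_{x,s}$ uniformly in the base point $x$. Everything else is a direct citation of already-established facts, so I would keep the write-up short and structured as a four-item list mirroring (B1)--(B4).
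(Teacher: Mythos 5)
Your proof is correct and follows essentially the same route as the paper's: (B1)--(B3) are dispatched by the same citations of \cref{assum:inf-samples}, \cref{lem:sps-model-properties}, and \cref{asum:phi}, and (B4) invokes the subgradient bound in exactly the same place. The only cosmetic difference is in (B4), where the paper proves the one-sided estimate directly from $f_x(x;s)=f(x;s)$ and $f_x(y;s)\geq f(x;s)+\iprod{g}{y-x}$ followed by Cauchy--Schwarz, whereas you obtain the (slightly stronger) two-sided Lipschitz bound from the fact that a pointwise maximum of Lipschitz functions is Lipschitz.
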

\begin{proof}
The properties (B1)--(B4) are identical to (B1)--(B4) in \citep[Assum.\ B]{Davis2019}, setting $r=\phi$, $f_x(\cdot,\xi) = f_x(\cdot;s)$, $\eta= 0$, $\tau = \rho$, $\mathsf{L} = \mathsf{G}$, and $L(\xi) = G_s$.
(B1) is identical to \cref{assum:inf-samples}. (B2) holds due to \cref{lem:sps-model-properties}, \ref{lem:sps-model-properties-ii}, applying expectation and using the definition of $f$, i.e.\ $f(x) = \E[f(x;S)]$. (B3) holds due to \cref{lem:sps-model-properties}, \ref{lem:sps-model-properties-i} and convexity of $\phi$.
For (B4), taking $g\in \partial f(x;s)$ and $x,y\in U$, we have 
\[f_{x}(x;s) - f_{x}(y;s)  \leq f(x;s) - f(x;s) - \iprod{g}{y-x} \leq \|g\|\|y-x\| \leq G_s \|x-y\|.\]
\end{proof}

%
\section{Model equivalence for \texttt{SGD}} \label{sec:sgd-equiv}
In the unregularized case, the \texttt{SGD} update 
\[x^{k+1}= x^k-\alpha_k g_k, \quad g_k \in \partial f(x^k;S_k),\]
can be seen as solving \eqref{eqn:model-spp-unreg} with the model
\[f_x(y;s) = f(x;s)+ \iprod{g}{y-x}, \quad g\in \partial f(x;s).\]
Now, consider again the regularized problem \eqref{prob:composite} with $\phi(x)=\frac{\lambda}{2}\|x\|^2$ and update \eqref{eqn:model-spp-reg} . \\
On the one hand, the model $\psi_x(y;s) = f(x;s)+\phi(x)+ \iprod{g+\lambda x}{y-x}$ with $g\in \partial f(x;s)$ yields
\begin{align}\label{eqn:sgd-naive}
x^{k+1} &= x^k - \alpha_k (g_k + \lambda x^k) = (1-\alpha_k \lambda)x^k - \alpha_k g_k.
\end{align}
On the other hand, the model $\psi_x(y;s) = f(x;s)+ \iprod{g}{y-x} + \phi(y)$ with $g\in \partial f(x;s)$ results in
\begin{align}\label{eqn:sgd-prox}
x^{k+1} &= \prox{\alpha_k \phi}(x^k-\alpha_k g_k) = \oneover{1+\alpha_k \lambda}\big[x^k - \alpha_k g_k \big] = (1-\frac{\alpha_k}{1+\alpha_k \lambda}\lambda)x^k - \frac{\alpha_k}{1+\alpha_k \lambda} g_k.
\end{align}
Running \eqref{eqn:sgd-naive} with step sizes $\alpha_k=\beta_k$ is equivalent to running \eqref{eqn:sgd-prox} with step sizes $\frac{\alpha_k}{1+\alpha_k \lambda} = \beta_k \iff \alpha_k = \frac{\beta_k}{1-\beta_k\lambda}$. In this sense, standard \texttt{SGD} can be seen to be equivalent to proximal \texttt{SGD} for $\ell_2$--regularized problems.
\section{Additional information on numerical experiments}
\subsection{Matrix Factorization}\label{sec:appendix-numerics-matrixfac}
\textbf{Synthetic data generation:} We consider the experimental setting of the deep matrix factorization experiments in \citep{Loizou2021}, but with an additional regularization. We generate data in the following way: first sample $B\in \R^{q\times p}$ with uniform entries in the interval $[0,1]$. Then choose $\upsilon\in\R$ (which will be our targeted inverse  condition number) and compute $A=DB$ where $D$ is a diagonal matrix with entries from $1$ to $\upsilon$ (equidistant on a logarithmic scale)\footnote{Note that \citep{Loizou2021} uses entries from $1$ to $\upsilon$ on a \textit{linear} scale which, in our experiments, did not result in large condition numbers even if $\upsilon$ is very small.}. In order to investigate the impact of regularization, we generate a noise matrix $E$ with uniform entries in $[-\epsilon,\epsilon]$ and set $\tilde{A} := A \odot (1+E)$. We then sample $y^{(i)} \sim N(0,I)$ and compute the targets $b^{(i)} = \tilde{A} y^{(i)}$.
A validation set of identical size is created by the same mechanism, but computing its targets, denoted by $b^{(i)}_\text{val}$, via the original matrix $A$ instead of $\tilde{A}$. The validation set contains $N_{\text{val}}=N$ samples. 
\begin{table}[h]
\centering
\begin{tabular}{ |ccccccc| } 
 \hline
 Name & $p$ & $q$ & $N$ & $\upsilon$ & $r$ & $\epsilon$ \\ [0.5ex] 
 \hline
\texttt{matrix-fac1} & 6 & 10 & 1000 & \texttt{1e-5} & 4 & 0\\ 
\texttt{matrix-fac2} & 6 & 10 & 1000 & \texttt{1e-5} & 10 & 0.05\\ 
 \hline
\end{tabular}
\caption{Matrix factorization synthetic datasets.}
\label{table:matrix-fac}
\end{table}
\vspace{2mm}\\
\textbf{Model and general setup:} Problem \eqref{prob-matrix-fac-reg} can be interpreted as a two-layer neural network without activation functions. We train the network using the squared distance of the model output and $b^{(i)}$ (averaged over a mini-batch) as the loss function. We run 50 epochs for different methods, step size schedules and values of $\lambda$. For each different instance, we do ten independent runs: each run has the identical training set and initialization of $W_1$ and $W_2$, but different shuffling of the training set and different samples $y^{(i)}$ for the validation set. In order to allow a fair comparison, all methods have identical train and validation sets across all runs. All metrics are averaged over the ten runs. We always use a batch size of $20$.
\subsection{Plots for \texttt{matrix-fac2}} \label{sec:matrix-fac2-plots}
In this section, we plot additional results for Matrix Factorization, namely for the setting \texttt{matrix-fac2} of \cref{table:matrix-fac}, see \cref{fig:matrix_fac2_stability},~\cref{fig:matrix_fac2_stability_val}, and \cref{fig:matrix_fac2}. The results are qualitatively very similar to the setting \texttt{matrix-fac1}.
\begin{figure}[H]
    \centering
    \includegraphics[height=0.2\textheight]{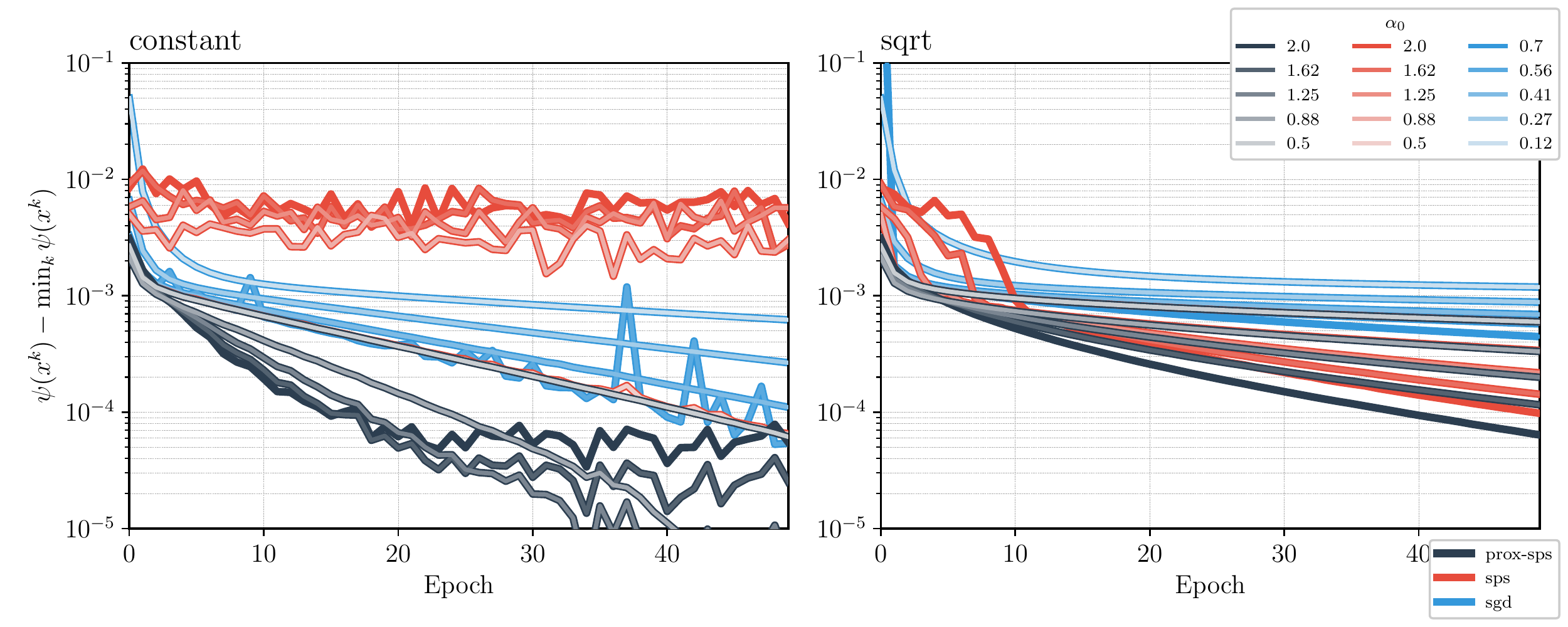}
    \caption{Objective function for  the Matrix Factorization problem~\eqref{prob-matrix-fac-reg}, with \texttt{constant} (left) and \texttt{sqrt} (right) step size schedule and several choices of initial values. Here $\min_k \psi(x^k)$ is the best objective function value found over all methods and all iterations.}
    \label{fig:matrix_fac2_stability}
\end{figure}
\begin{figure}[H]
    \centering
    \includegraphics[height=0.2\textheight]{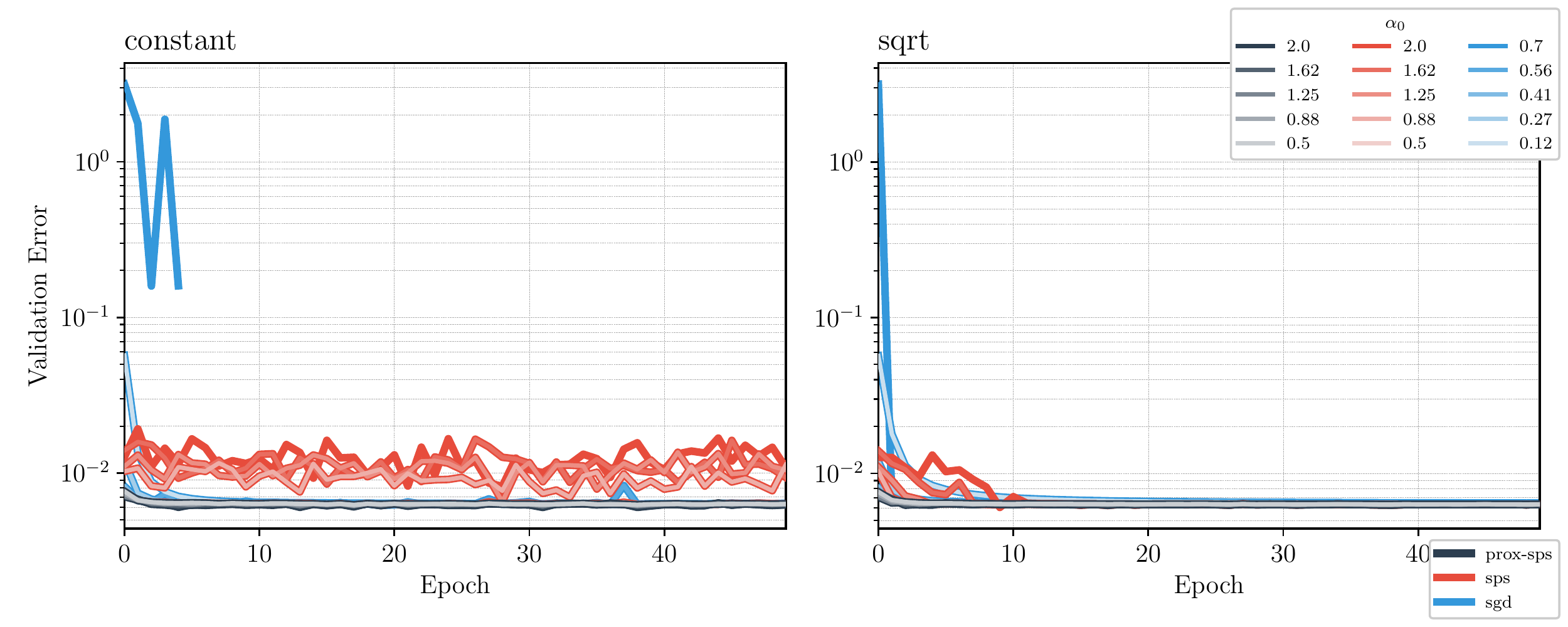}
    \caption{Validation error for   the Matrix Factorization problem~\eqref{prob-matrix-fac-reg}, with \texttt{constant} (left) and \texttt{sqrt} (right) step size schedule and several choices of initial values. }
    \label{fig:matrix_fac2_stability_val}
\end{figure}
\begin{figure}[H]
    \centering
    \includegraphics[width=0.99\textwidth]{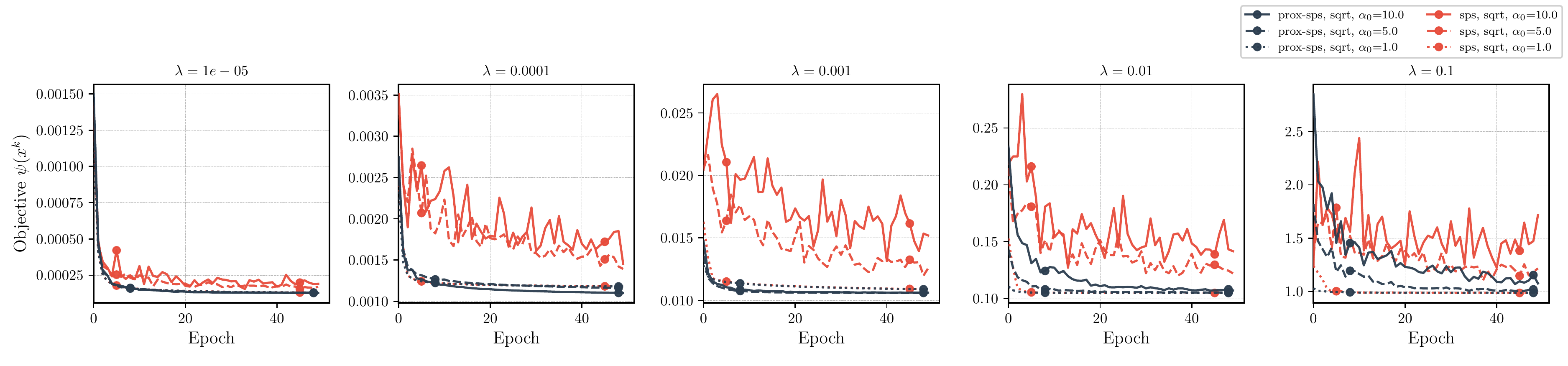}
    \includegraphics[width=0.99\textwidth]{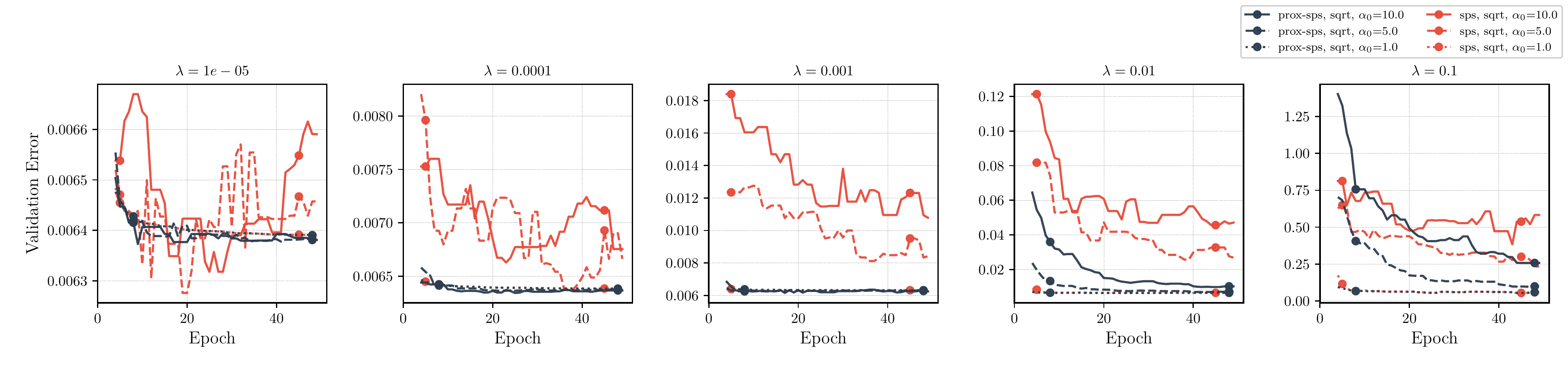}
    \caption{ Objective function value and validation error over the course of optimization. For the validation error, we plot a rolling median over five epochs in order to avoid clutter.}
    \label{fig:matrix_fac2}
\end{figure}

\subsection{Matrix completion experiment} \label{sec:exp-matrix-completion}

Consider an unknown matrix of interest $W\in \R^{d_1 \times d_2}$. Factorizing $W\approx U^\T V$ with $U\in \R^{r \times d_1},~V\in \R^{r \times d_2}$, we can estimate the entries of matrix $W$ as 
\begin{align}\label{eqn:model-matrix-complete}
    \hat{W}_{ij} = u_i^\T v_j+ b^U_{i} + b^V_{j}, \quad i \in [d_1], ~ j\in [d_2],
\end{align}
where $u_i$ is the $i$-th column of $U$ and $v_j$ is the $j$-th column of $V$, and $b^U \in \R^{d_1},~b^V \in \R^{d_2}$ are bias terms \citep{RiveraMunoz2022}.

We can interpret this as an empirical risk minimization problem as follows:
let $\mathcal{T}$ be the set of indices $(i,j)$ where $W_{ij}$ is known. 
With $\hat{W}_{ij}$ as in \eqref{eqn:model-matrix-complete} for trainable parameters $(U,V, b^U, b^V)$, the (regularized) problem is then given as 
\begin{align*}
    \min_{U,V, b^U, b^V} \frac{1}{|\mathcal{T}|} \sum_{(i,j) \in \mathcal{T}} (W_{ij} - \hat{W}_{ij})^2 + \frac{\lambda}{2}\|(U,V, b^U, b^V)\|^2.
\end{align*}

We use a dataset containing air quality measurements of a sensor network over one month. This dataset has been studied in \citet{RiveraMunoz2022}.\footnote{The dataset can be downloaded from \url{https://github.com/andresgiraldo3312/DMF/blob/main/DatosEliminados/Ventana_Eli_mes1.csv}.} The dataset contains measurements from 130 sensors over 720 timestamps, hence $d_1=130,~d_2=720$. In total, there are 56158 nonzero measurements (the rest was missing data or removed due to being an outlier). We split the nonzero measurements into a training set of size $|\mathcal{T}|=44926 \approx 0.8 \cdot 56158$ and the rest as a validation set. We standardize training and validation set using mean and variance of the training set. 
We set $r=24$ and use batch size 128. The validation error is defined as the root mean squared error on the elements of the validation set (which is not used for training). 

\textbf{Discussion}: The results are plotted in \cref{fig:sensor-metrics} and~\cref{fig:sensor-metrics2}. For all methods, we use a constant step size $\alpha_k$. \texttt{ProxSPS} achieves the smallest error on the validation set for the two smaller values of $\lambda$. For the largest $\lambda$, \texttt{ProxSPS}, \texttt{SPS} and \texttt{SGD} are almost identical for $\alpha_0 = 5$, but \texttt{SGD} with $\alpha_0=1$ is the best method. However, over all tested values of $\lambda$, \cref{fig:sensor-metrics2} shows that \texttt{ProxSPS} obtains the smallest error. Again, from the lower plot in \cref{fig:sensor-metrics} we can observe that \texttt{ProxSPS} produces iterates with smaller norm. 

\begin{figure}[t]
    \centering
    \includegraphics[trim=0 0.2cm 0 0.7cm, width=0.8\textwidth]{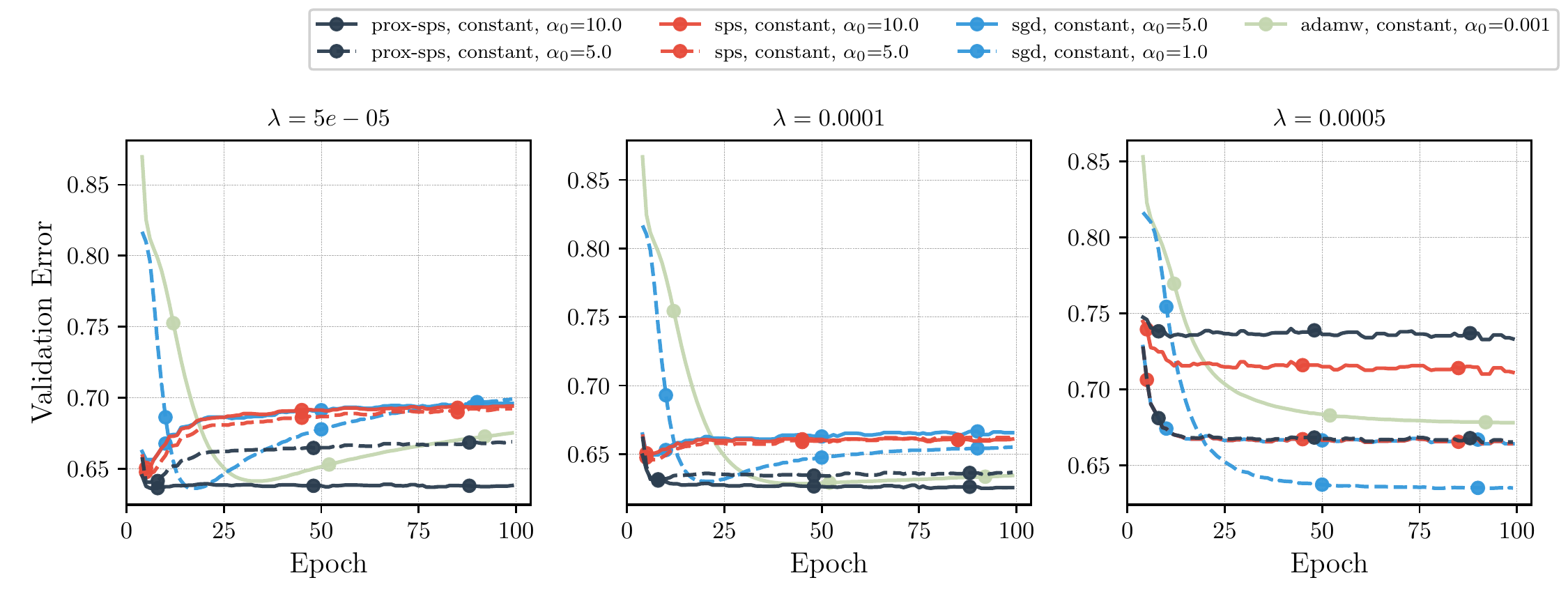}
    \includegraphics[trim=0 0.5cm 0 0.2cm, width=0.8\textwidth]{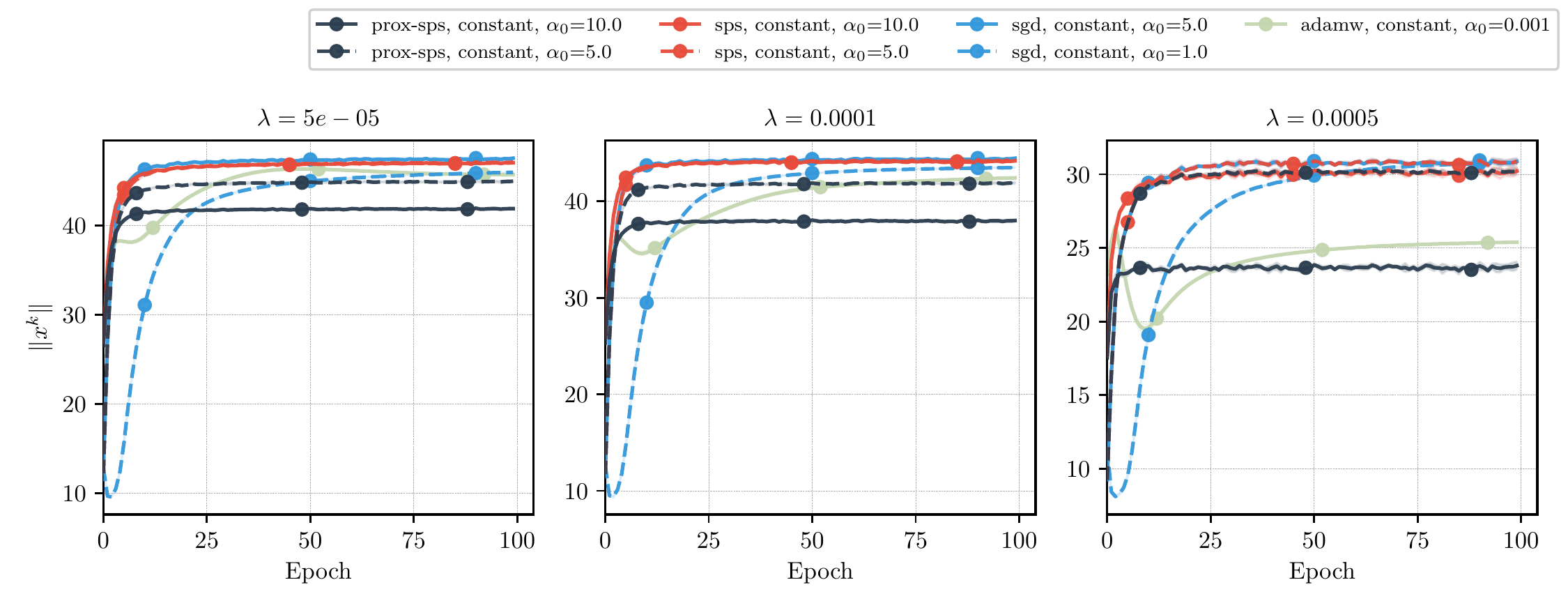}
    \caption[]
{\texttt{Matrix Completion}: Validation error (top) and model norm (top) for three values of the regularization parameter $\lambda$. Validation error is plotted as five-epoch running median. Shaded area is two standard deviations over ten independent runs.}
\label{fig:sensor-metrics}
\end{figure}
\begin{figure}[t]
    \centering
    \begin{subfigure}[t]{0.45\textwidth}
        \includegraphics[width=0.99\textwidth]{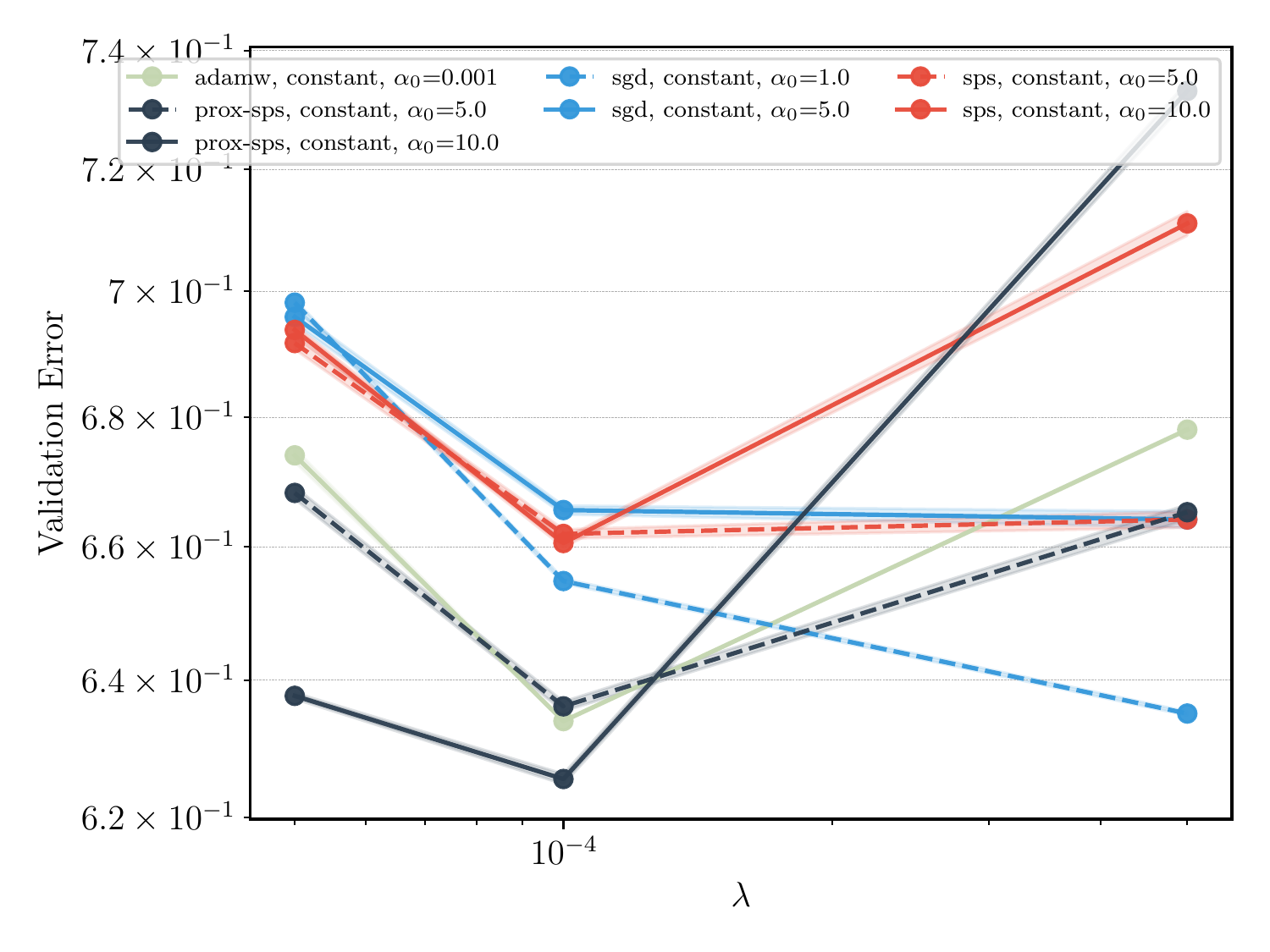}
        \caption{}
        \label{fig:sensor-metrics2}
    \end{subfigure}
    \begin{subfigure}[t]{0.45\textwidth}
        \includegraphics[width=0.99\textwidth]{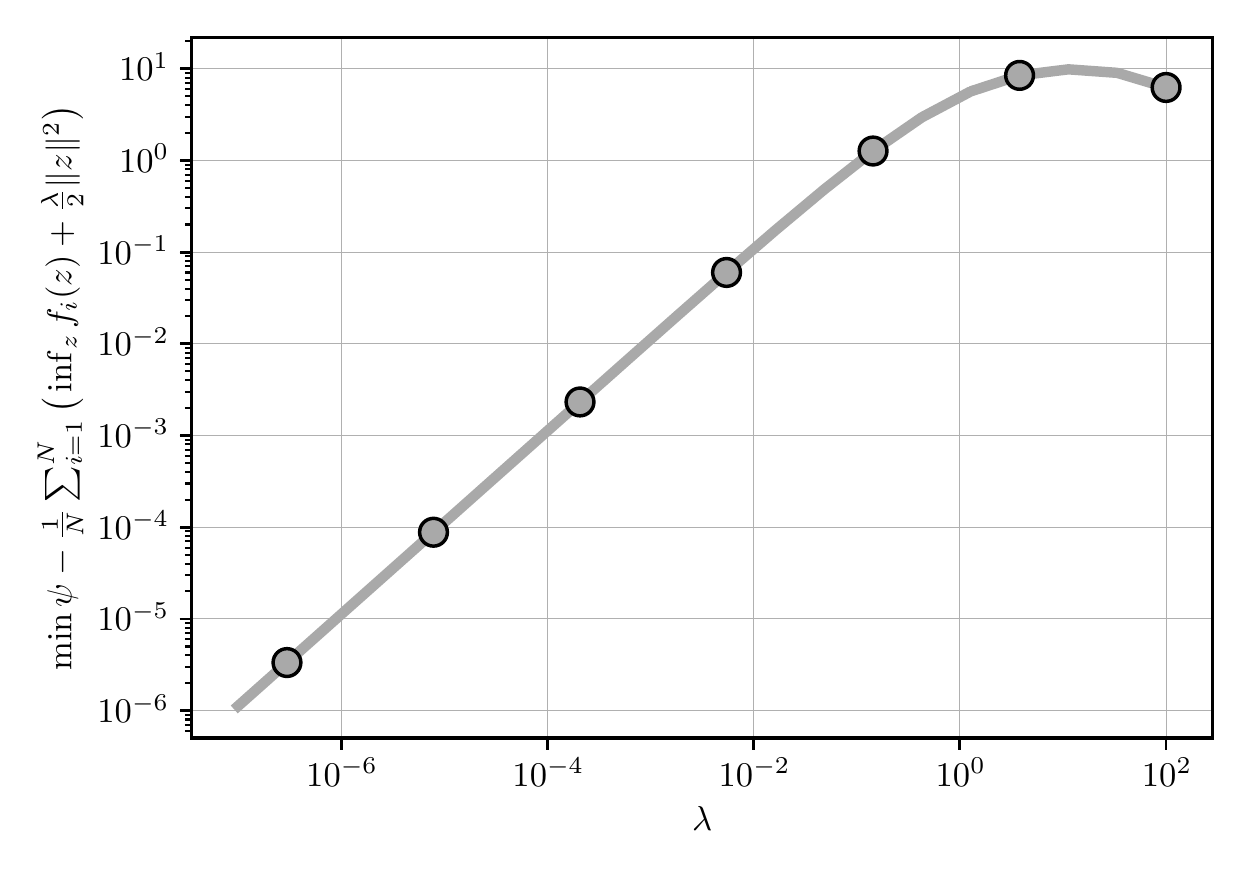}
        \caption{}
        \label{fig:interpolation-ridge}
    \end{subfigure}
    \caption{(a) \texttt{Matrix Completion}: Validation error as a function of the regularization parameter $\lambda$. Shaded area is one standard deviation (computed over ten independent runs). For all values, we take the median over epochs $[90,100]$.
    (b) Interpolation constant for a ridge regression problem for varying regularization parameter $\lambda$. See \cref{sec:interpolation-constant} for details.}
    \label{fig:additional-plots}
\end{figure}
\subsection{\texttt{Imagenet32} experiment} \label{sec:imagenet32-exp}

\texttt{Imagenet32} contains 1,28 million training and 50,000 test images of size $32\times32$, from 1,000 classes. We train the same \texttt{ResNet110} as described in \cref{sec:deep-learning-exp} with two differences: we exchange the output dimension of the final layer to 1,000 and activate batch norm. We use batch size 512. For this experiment we only run one repetition.

Similar to the setup in \cref{sec:deep-learning-exp}, we run all methods for three different values of $\lambda$. For \texttt{AdamW}, we use a constant learning rate $0.001$, for \texttt{SGD}, \texttt{SPS}, and \texttt{ProxSPS} we use the \texttt{sqrt}-schedule and $\alpha_0=1$. The validation accuracy and model norm are plotted in \cref{fig:imagenet32-resnet110-metrics}: we can observe that all methods perform similarly well in terms of accuracy. However, \texttt{AdamW} is more sensitive with respect to the choice of $\lambda$ and the norm of its iterates differs significantly from the other methods. Further, using an adaptive step size is advantageous: from \cref{fig:imagenet32-resnet110-step-sizes}, we see that the adaptive step size is active in the initial iterations, which leads to a faster learning of \texttt{(Prox)SPS} in the initial epochs compared to \texttt{SGD}. 

\begin{figure}[t]
    \centering
    \includegraphics[trim=0 0.2cm 0 0.7cm, width=0.8\textwidth]{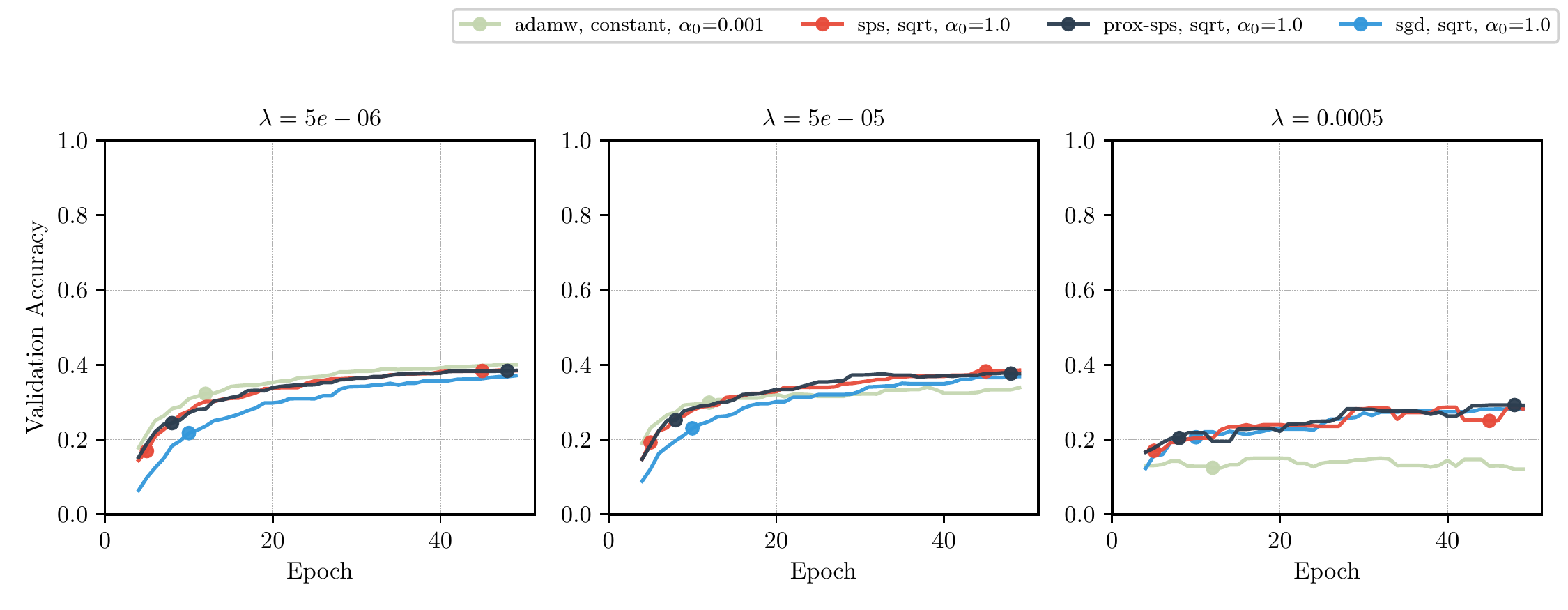}
    \includegraphics[trim=0 0.5cm 0 0.2cm, width=0.8\textwidth]{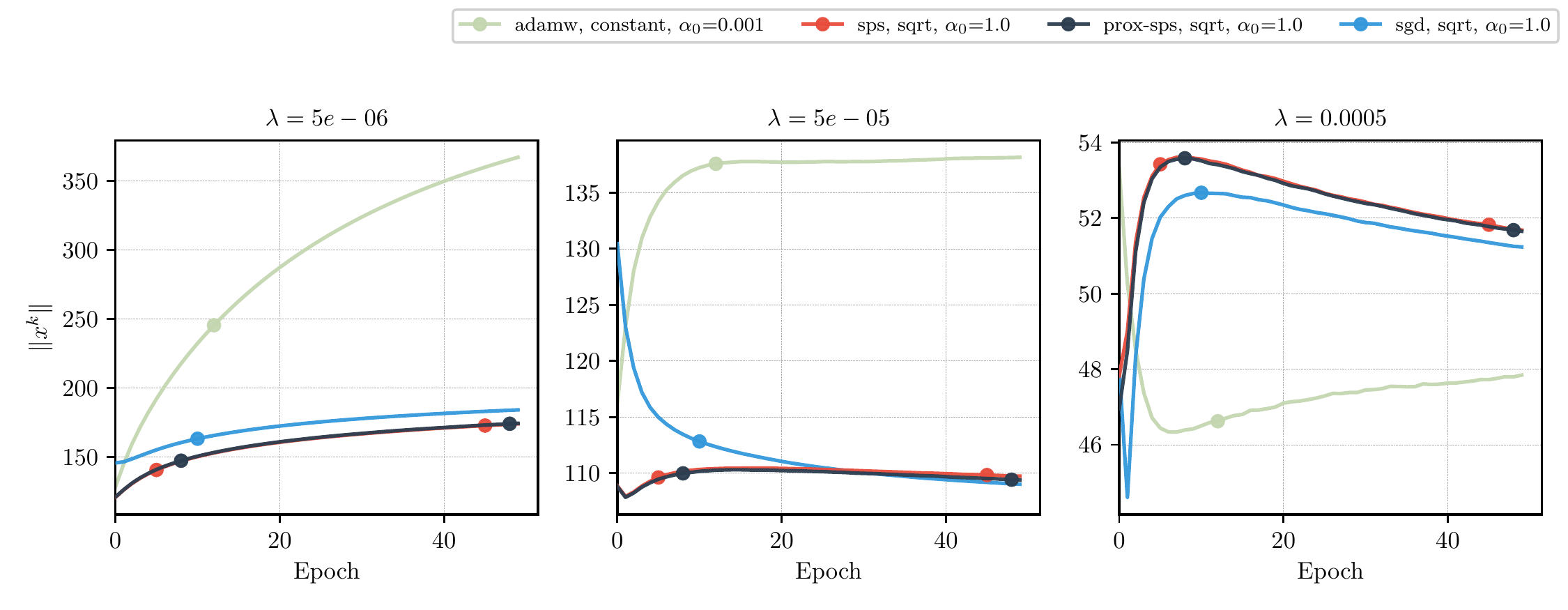}
    \caption[]
{\texttt{ResNet110} for \texttt{Imagenet32}: Validation accuracy as five-epoch running median (top) and model norm (bottom) for three values of $\lambda$.}
\label{fig:imagenet32-resnet110-metrics}
\end{figure}
\begin{figure}[t]
\centering
\includegraphics[width=0.85\textwidth]{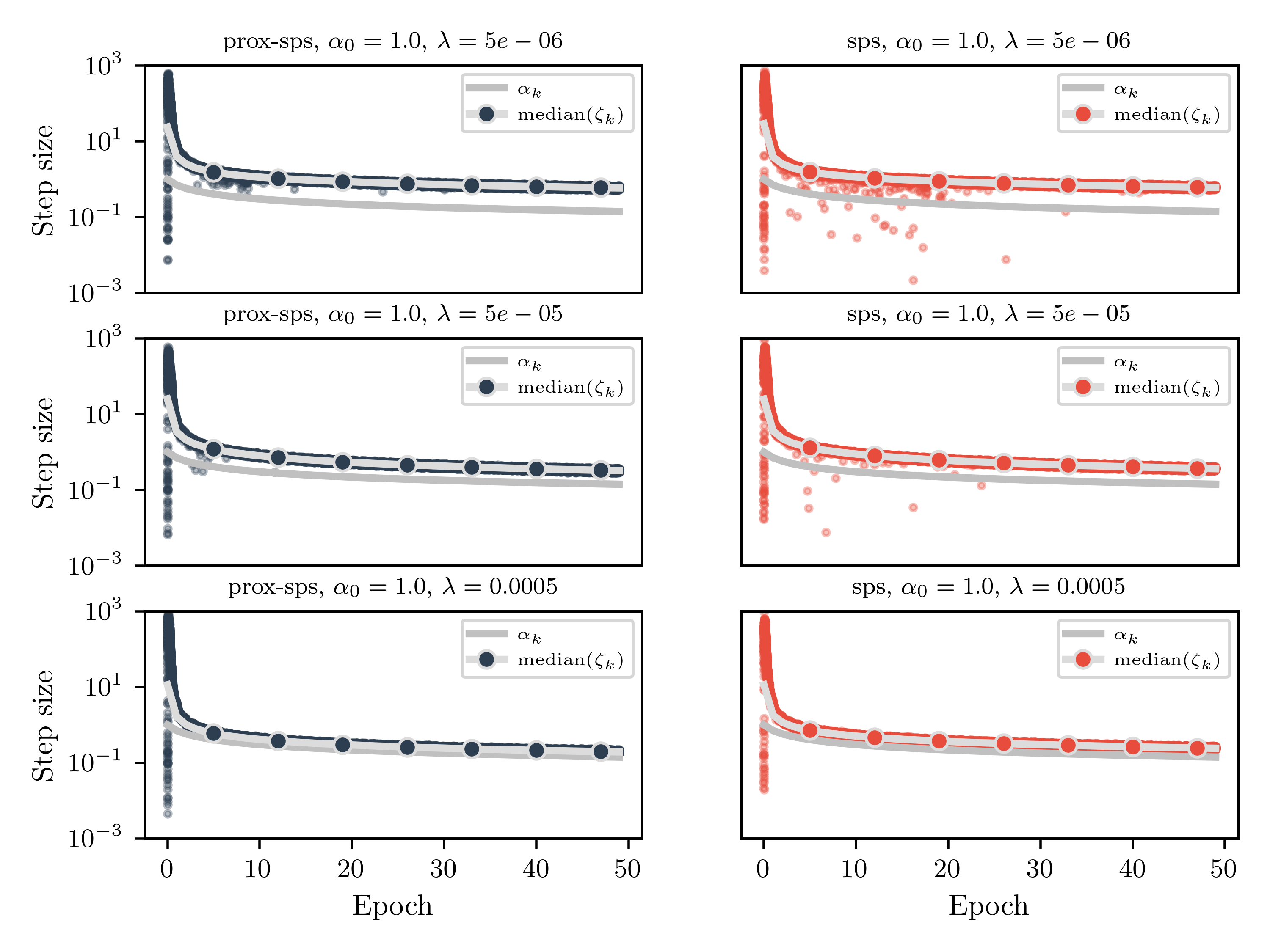}
\caption{\texttt{ResNet110} for \texttt{Imagenet32}: Adaptive step sizes for \texttt{SPS} and \texttt{ProxSPS}. See definition of $\zeta_k$ in \cref{sec:numerics-general}.}
\label{fig:imagenet32-resnet110-step-sizes}
\end{figure}

\subsection{Interpolation constant} \label{sec:interpolation-constant}

We illustrate how the interpolation constant $\sigma^2$ behaves if it would be computed for the regularized loss $\ell_i(x) = f_i(x) + \frac{\lambda}{2}\|x\|^2 $ (cf.\ also \cref{sec:sps-max-reg-naive}).
We do a simple ridge regression experiment. Let $A\in \R^{N\times n}$ be a matrix with row vectors $a_i \in \R^n,~i\in[N]$. We set $N=80,~n=100$ and generate $\hat x \in \R^n$ with entries drawn uniformly from $[0,1]$. We compute $b=A\hat x$. In this case, we have $f_i(x) = \frac12 (a_i^\top x - b_i)^2$ and $f(x) = \frac{1}{N}\sum_{i=1}^N f_i(x)$.

If one would apply the theory of \spsmax{} for the regularized loss functions $\ell_i$ with estimates $\underline{\ell}_i =0$, the constant $\sigma^2 = \big(\min_{x\in\R^n} f(x)+\phi(x)\big) - \frac{1}{N}\sum_{i=1}^N \inf_z \ell_i(z)$ determines the size of the constant term in the convergence results of \citep{Loizou2021,Orvieto2022}. We compute $\min_{x\in\R^n} f(x)+\phi(x)$ by solving the ridge regression problem. Further, the minimizer of $\ell_i$ is given by $(a_ia_i^\top + \lambda \mathbf{Id})^{-1}a_ib_i$. We plot $\sigma^2$ for varying $\lambda$ in \cref{fig:interpolation-ridge} to verify that $\sigma^2$ grows significantly if $\lambda$ becomes large (even if the loss could be interpolated perfectly, \ie $\inf_x f(x) = 0$).
%
%
We point out that the constant $\sigma^2$ does not appear in our convergence results \cref{thm:convex-smooth-reg} and \cref{thm:exact-nonconv-reg}. 
%
%
\end{document}